\documentclass[a4paper, 12pt]{article}

\usepackage[utf8]{inputenc}
\usepackage[T1]{fontenc}
\usepackage[a4paper, margin=2.5cm]{geometry}
\usepackage{needspace}
\usepackage{cite}
\setlength {\marginparwidth }{2cm}

\usepackage{amsmath, amsthm, amssymb}
\usepackage{graphicx}
\usepackage{enumitem}
\usepackage{authblk}
\usepackage[textsize=small, textwidth=2cm, color=yellow]{todonotes}
\usepackage{thmtools}
\usepackage{thm-restate}
\usepackage[colorlinks=true, linkcolor =blue!80, citecolor=orange!70!black]{hyperref}
\usepackage{float}

\usepackage[capitalize]{cleveref}

\Crefname{conjecture}{Conjecture}{Conjectures}
\usepackage{comment}

\renewenvironment{abstract}
{\small\vspace{-1em}
\begin{center}
\bfseries\abstractname\vspace{-.5em}\vspace{0pt}
\end{center}
\list{}{
\setlength{\leftmargin}{0.6in}%
\setlength{\rightmargin}{\leftmargin}}%
\item\relax}
{\endlist}

\declaretheorem[name=Theorem, numberwithin=section]{theorem}
\declaretheorem[name=Lemma, sibling=theorem]{lemma}
\declaretheorem[name=Proposition, sibling=theorem]{proposition}

\declaretheorem[name=Corollary, sibling=theorem]{corollary}
\declaretheorem[name=Conjecture, sibling=theorem]{conjecture}

\declaretheorem[name=Remark, style=remark, sibling=theorem]{remark}
\declaretheorem[name=Observation, style=remark, sibling=theorem]{observation}
\declaretheorem[name=Question, style=remark, sibling=theorem]{question}
\def\cqedsymbol{\ifmmode$\lrcorner$\else{\unskip\nobreak\hfil
\penalty50\hskip1em\null\nobreak\hfil$\lrcorner$
\parfillskip=0pt\finalhyphendemerits=0\endgraf}\fi}

\interfootnotelinepenalty=10000

\newcommand\eqdef{\overset{\text{\tiny{def}}}{=}}

 %

\DeclareMathOperator{\tw}{\mathsf{tw}}

\newcommand{\N}{\mathbb{N}}
\newcommand{\NP}{\textsf{NP}}

\let\le\leqslant
\let\ge\geqslant
\let\leq\leqslant
\let\geq\geqslant

\newcommand{\tin}{\mathsf{tree} \textnormal{-} \alpha}
\newcommand{\tccn}{\mathsf{tree} \textnormal{-} \theta}
\newcommand{\fvs}{\mathsf{fvs}}
\newcommand{\ibn}{\mathsf{ibn}}

\thickmuskip=5mu plus 1mu minus 2mu

\title{Treewidth versus clique number. V. Further connections with tree-independence number}

\author[1]{Claire Hilaire}
\author[1,2]{Martin Milanič}
\author[1]{Đorđe Vasić}
\affil[1]{FAMNIT, University of Primorska, Koper, Slovenia}
\affil[2]{IAM, University of Primorska, Koper, Slovenia}

\date{}

\begin{document}

\maketitle

\begin{sloppypar}
\begin{abstract}
We continue the study of $(\tw,\omega)$-bounded graph classes, that is, hereditary graph classes in which large treewidth is witnessed by the presence of a large clique, and the relation of this property to boundedness of the tree-independence number, a graph parameter introduced independently by Yolov in 2018 and by Dallard, Milanič, and Štorgel in 2024.
Dallard et al.~showed that bounded tree-independence number is sufficient for $(\tw,\omega)$-boundedness, and conjectured that the converse holds.
While this conjecture has been recently disproved, it is still interesting to determine classes where the conjecture holds; for example, the conjecture is still open for graph classes excluding an induced star, as well as for finitely many forbidden induced subgraphs.
In this paper, we identify further families of graph classes where $(\tw,\omega)$-boundedness is equivalent to bounded tree-independence number.
We settle a number of cases of finitely many forbidden induced subgraphs, obtain several equivalent characterizations of $(\tw, \omega)$-boundedness in subclasses of the class of complements of line graphs, and give a short proof of a recent result of Ahn, Gollin, Huynh, and Kwon [SODA 2025] establishing bounded tree-independence number for graphs excluding a fixed induced star and a fixed number of independent cycles.

\medskip
\noindent{\bf Keywords:} treewidth, clique number, tree-independence number, hereditary graph class, line graph

\medskip
\noindent{\bf MSC Classes (2020):}
05C75, 
05C05, 
05C69, 
05C83, 
05C76. 
\end{abstract}
\end{sloppypar}

\section{Introduction}

In this paper, we contribute to the study of $(\tw,\omega)$-bounded graph classes, continuing the research from~\cite{DBLP:journals/jctb/DallardMS24,zbMATH07436460
,dallard2023treewidth,dallard2024treewidth}.
The notion of $(\tw,\omega)$-boundedness is a relaxation of the notion of bounded treewidth that can also hold for dense graph classes.
A graph class is said to be \emph{$(\tw,\omega)$-bounded} if the only reason for large treewidth in graphs from the class is the presence of a large clique, and the same holds for induced subgraphs of graphs in the class.
More precisely, $(\tw,\omega)$-boundedness of a graph class $\mathcal{G}$ requires the existence of a function $f$ such that for every graph $G\in \mathcal{G}$ and every induced subgraph $G'$ of $G$, it holds that $\tw(G')\le f(\omega(G'))$, where $\tw$ and $\omega$ denote the treewidth and the clique number, respectively.

The $(\tw,\omega)$-boundedness property has some good algorithmic implications.
For instance, as shown by Chaplick et al.~\cite{MR4332111}, if a graph class $\mathcal{G}$ is $(\tw,\omega)$-bounded, then the \textsc{$k$-Clique} and \textsc{List $k$-Coloring} problems can be solved in polynomial time for graphs in $\mathcal{G}$.
Furthermore, in~\cite{zbMATH07436460} Dallard et al.~gave  improved approximations for the \textsc{Maximum Clique} problem and ask whether $(\tw,\omega)$-boundedness has any good algorithmic implications for problems related to independent set (see also~\cite{dallard2023treewidth}).
In particular, it is not known if the \textsc{Independent Set} problem is solvable in polynomial time in every $(\tw,\omega)$-bounded graph class.

In this context, Yolov~\cite{MR3775804}, and independently Dallard, Milani\v{c}, and \v{S}torgel~\cite{DBLP:journals/jctb/DallardMS24} defined a new parameter called tree-independence number, which is a generalization of treewidth, in which the goal is to have bags of small independence number instead of small size (see \cref{sec:prelim} for precise definitions).
Boundedness of the tree-independence number implies polynomial-time solvability for the \textsc{Independent Set} problem (see also~\cite{MR3775804,LMMORS} for other problems solvable in polynomial time in classes of graphs with bounded tree-independence number).
Furthermore, Dallard et al.~\cite{DBLP:journals/jctb/DallardMS24} showed that bounded tree-independence number is sufficient for $(\tw,\omega)$-boundedness, and conjectured that the converse holds.

\medskip
\begin{sloppypar}
\begin{conjecture}[Dallard et al.~\cite{dallard2023treewidth}]
\label{conj:tree-alpha}
Let $\mathcal{G}$ be a hereditary graph class.
Then $\mathcal{G}$ is $(\tw,\omega)$-bounded if and only if $\mathcal{G}$ has bounded tree-independence number.
\end{conjecture}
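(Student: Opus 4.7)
The forward direction is already established in the literature and reduces to a short Ramsey argument. Indeed, if $\mathcal{G}$ has tree-independence number at most $k$, then for every $G\in\mathcal{G}$ and every induced subgraph $G'$ of $G$, restricting an optimal tree decomposition of $G$ to $V(G')$ yields a tree decomposition of $G'$ whose bags still have independence number at most $k$. A bag with independence number at most $k$ and clique number at most $\omega(G')$ has at most $R(k+1,\omega(G')+1)-1$ vertices, so $\tw(G')$ is bounded by a function of $\omega(G')$, giving $(\tw,\omega)$-boundedness. Hence all the work is in the converse.

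For the converse I would attack the contrapositive: assume $\mathcal{G}$ has unbounded tree-independence number, so for every $k$ there is $G_k\in\mathcal{G}$ whose tree-independence number exceeds $k$, and one must extract from $G_k$ an induced subgraph with large treewidth but bounded clique number. The natural vehicle is a structural witness to large tree-independence number, in the spirit of the bramble/tangle witnesses for large treewidth: a robust family of pairwise touching independent sets that cannot be simultaneously separated. The plan is to fix a target clique bound $\omega_0$, apply a Ramsey-style pruning that replaces every large clique intersecting this witness by a small representative set while preserving enough of the touching structure, and then argue that the resulting induced subgraph still has treewidth tending to infinity with $k$.

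The main obstacle, and in fact the fatal one for the statement as written, is precisely the pruning step: there is no a priori reason that an obstruction to small tree-independence number should survive the removal of large cliques, and the authors indicate in the abstract that the conjecture has in fact been disproved. So a sincere proof proposal must admit that the approach above cannot work in full generality; its value is to localise where it fails and thereby suggest which additional hypotheses on $\mathcal{G}$ would make it go through. Excluding an induced $K_{1,t}$, for instance, forces every neighbourhood to have bounded independence number and therefore controls how independent sets can spread across a bag; restricting to (subclasses of) complements of line graphs converts the problem into one about matching structure in the underlying graph, where independent sets become matchings and cliques become stars.

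The realistic plan, then, is not to prove \cref{conj:tree-alpha} outright but to carry out the contrapositive programme inside each such restricted family, substituting the failed generic pruning by a class-specific structural decomposition — using $K_{1,t}$-freeness to turn local independent sets into local cliques, or using the line-graph correspondence to translate bounded tree-independence number into a statement about tree-cut width style parameters on the root graph. I expect that the hardest step in each case is the same: showing that the structural obstruction to bounded tree-independence number can be witnessed by an induced subgraph of bounded clique number, rather than merely by a minor or a subgraph that incidentally contains many large cliques.
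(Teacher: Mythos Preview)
The statement in question is a conjecture, not a theorem, and the paper does not prove it; on the contrary, the paper explicitly records that it was recently disproved by Chudnovsky and Trotignon. So there is no ``paper's own proof'' to compare against, and your proposal is entirely appropriate in recognising this. Your forward implication is correct and is exactly the content of \Cref{lem:bdd-tree-alpha-tw-omega-bdd} (the Ramsey argument from Dallard et al.), and you rightly concede that the converse cannot be proved in general.

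Your sketch of how one might rescue the converse under extra hypotheses (excluding an induced star, or working inside complements of line graphs) is in the right spirit and matches the programme of the paper, but the actual mechanisms the paper uses for those special cases are rather different from the bramble/pruning strategy you outline. For hereditary classes excluding a clique partition graph (which covers complements of line graphs, since these are $(K_3+P_1)$-free), the paper does not build any witness to large tree-independence number; instead it invokes the Chudnovsky--Seymour theorem on bounded splitness to split every graph into a part of bounded clique number and a part of bounded independence number, and then pads an optimal tree decomposition of the first part with the second. For $K_{1,t}$-free $\mathcal{O}_k$-free graphs, the paper again avoids any obstruction-extraction argument and instead bounds the feedback vertex number via the degree/cycle-rank results of Bonamy et al., then pads a tree decomposition of the resulting forest. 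In both cases the logic is ``find a small/low-$\alpha$ set whose removal leaves something with small tree-independence number, and add that set to every bag'', which is closer to a direct construction than to the contrapositive witness-extraction you propose.
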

\end{sloppypar}

\Cref{conj:tree-alpha} was recently disproved by Chudnovsky and Trotignon (see~\cite{CT24}).
Nevertheless, it is still an interesting to ask in which graph classes the two properties of $(\tw,\omega)$-boundedness and bounded tree-independence number are equivalent.
Several recent works (see~\cite{MR4763243,arXiv:2406.13053,dallard2023treewidth}) established bounded tree-independence number for graph classes that were previously known to be $(\tw,\omega)$-bounded (see also~\cite{DBLP:conf/soda/ChudnovskyGHLS25,arXiv:2405.00265}).
Furthermore, the following result due to Dallard et al.~\cite{dallard2024treewidth} established \Cref{conj:tree-alpha} for subclasses of the class of line graphs.

\begin{theorem}[Dallard et al.~\cite{dallard2024treewidth}]\label{prop:line-graphs}
Let $\mathcal{G}$ be a class of graphs and let $L(\mathcal{G})$ be the class of line graphs of graphs in~$\mathcal{G}$.
Then, the following statements are equivalent.
\begin{enumerate}
\item The class $L(\mathcal{G})$ is $(\tw,\omega)$-bounded.
\item The class $L(\mathcal{G})$ has bounded tree-independence number.
\item The class $\mathcal{G}$ has bounded treewidth.
\end{enumerate}
\end{theorem}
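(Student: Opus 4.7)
The plan is to prove the three statements equivalent via the cycle $(2)\Rightarrow(1)\Rightarrow(3)\Rightarrow(2)$. The implication $(2)\Rightarrow(1)$ is immediate from the general fact, stated in the introduction, that bounded tree-independence number implies $(\tw,\omega)$-boundedness.

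For $(3)\Rightarrow(2)$ I would give a direct ``lifting'' construction. Given a tree decomposition $(T,(B_t)_{t\in V(T)})$ of $G\in\mathcal{G}$ of width at most $k$, define a tree decomposition of $L(G)$ on the same tree $T$ by
\[
B'_t \;=\; \{\,e\in E(G) : e\cap B_t\neq\emptyset\,\}.
\]
The three tree-decomposition axioms are routine: for a vertex $e=uv$ of $L(G)$, the set $\{t : e\in B'_t\}$ is the union $T_u\cup T_v$ of the $G$-subtrees for $u$ and $v$, which meet because $uv$ is an edge of $G$. Any independent set in $L(G)[B'_t]$ is a matching in $G$ whose edges each have an endpoint in $B_t$, and so has size at most $|B_t|\le k+1$. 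Hence $\tin(L(G))\le k+1$, and $L(\mathcal{G})$ has bounded tree-independence number.

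For $(1)\Rightarrow(3)$ I would argue by contrapositive. A ``reverse'' construction converts any tree decomposition $(T,(B_t))$ of $L(G)$ of width $w$ into one of $G$ by setting $B'_t=V(B_t)$, the set of endpoints of the edges of $G$ appearing in $B_t$; the subtree property for $v\in V(G)$ follows because the edges incident to $v$ are pairwise adjacent in $L(G)$, so the corresponding subtrees of $T$ pairwise intersect and their union is a subtree. Since $|B'_t|\le 2|B_t|$, this yields $\tw(G)\le 2\tw(L(G))+1$. Now suppose $\mathcal{G}$ has unbounded treewidth. By the classical wall theorem (a consequence of the Robertson--Seymour grid-minor theorem), for every $t\in\N$ there exists $G_t\in\mathcal{G}$ containing, as a subgraph, a subdivision $H_t$ of the $t\times t$ wall, with $\Delta(H_t)\le 3$ and $\tw(H_t)=\Omega(t)$. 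Then $L(H_t)\subseteq L(G_t)\in L(\mathcal{G})$ is an induced subgraph satisfying $\omega(L(H_t))\le 3$ (cliques in line graphs are edge-stars or triangles), while the reverse construction applied to $H_t$ gives $\tw(L(H_t))\ge(\tw(H_t)-1)/2=\Omega(t)$, which is unbounded. Hence $L(\mathcal{G})$ is not $(\tw,\omega)$-bounded.

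The main obstacle is this last implication: the two elementary constructions relating tree decompositions of $G$ and $L(G)$ do not by themselves suffice, because $(\tw,\omega)$-boundedness controls treewidth only in terms of clique number, and a graph $G$ of large treewidth may have large maximum degree, making $\omega(L(G))$ large. Invoking the wall theorem to extract a bounded-degree witness of large treewidth inside $G$ is what bridges this gap and is the deepest ingredient of the argument.
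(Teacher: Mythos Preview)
Your argument is correct, but note that the present paper does \emph{not} prove this theorem: it is quoted from Dallard et al.~\cite{dallard2024treewidth} as background and no proof is given here, so there is no ``paper's own proof'' to compare against.

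That said, your three implications are all sound. The lifting construction for $(3)\Rightarrow(2)$ is exactly the standard one, and your bound $\alpha(L(G)[B'_t])\le |B_t|$ via the matching argument is clean. The reverse construction for $\tw(G)\le 2\tw(L(G))+1$ is also correct (the Helly property of subtrees is precisely what you need for the connectedness axiom; isolated vertices of $G$ are a non-issue for treewidth). The key step, as you identify, is $(1)\Rightarrow(3)$: your use of the wall/grid-minor theorem to pass to a bounded-degree subgraph $H_t\subseteq G_t$ of large treewidth, so that $L(H_t)$ is an induced subgraph of $L(G_t)$ with $\omega(L(H_t))\le 3$ and $\tw(L(H_t))\to\infty$, is exactly the right idea and matches the approach in~\cite{dallard2024treewidth}. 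One could alternatively invoke the treewidth of line graphs directly (e.g., results of Harvey and Wood relating $\tw(L(G))$ to $\tw(G)$ and $\Delta(G)$), but your self-contained reverse construction works just as well.
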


\Cref{conj:tree-alpha} is still open for graph classes defined by finitely many forbidden induced subgraphs as well as for graph classes excluding some induced \emph{star}, that is, a complete bipartite graph of the form $K_{1,t}$ for some positive integer $t$.
Let us discuss these two cases in some more detail.

\subsection*{Finitely many forbidden induced subgraphs}
Lozin and Razgon~\cite{MR4385180} showed that if $\mathcal{G}$ is a class of graphs defined by finitely many forbidden induced subgraphs, then $\mathcal{G}$  has bounded treewidth if and only if it excludes at least one complete graph $K_t$, at least one complete bipartite graph $K_{t,t}$, at least one graph from the family $\mathcal{S}$ of graphs every component of which is a tree with at most three leaves, and at least one graph from the family $L(\mathcal{S})$ of line graphs of graphs in $\mathcal{S}$.
An immediate consequence of this result is the following corollary, where for a family $\mathcal{F}$ of graphs, a graph $G$ is said to be \emph{$\mathcal{F}$-free} if it does not contain an induced subgraph isomorphic to any member of $\mathcal{F}$.

\begin{corollary}\label{cor:LR}
For any finite family $\mathcal{F}$ of graphs, the class of $\mathcal{F}$-free graphs is $(\tw, \omega)$-bounded if and only if $\mathcal{F}$ contains a complete bipartite graph, a graph from $\mathcal{S}$, and a graph from~$L(\mathcal{S})$.
\end{corollary}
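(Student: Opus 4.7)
The plan is to derive the corollary directly from the theorem of Lozin and Razgon, exploiting the simple observation that, for every positive integer $k$, the class of $\mathcal{F}$-free graphs of clique number at most $k$ coincides with the class of $(\mathcal{F}\cup\{K_{k+1}\})$-free graphs, and that a hereditary class is $(\tw,\omega)$-bounded if and only if, for every $k$, its subclass of graphs of clique number at most $k$ has bounded treewidth.

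For the ``if'' direction, I would assume that $\mathcal{F}$ contains a complete bipartite graph, a graph from $\mathcal{S}$, and a graph from $L(\mathcal{S})$. For each $k$, the family $\mathcal{F}\cup\{K_{k+1}\}$ meets all four conditions of the theorem of Lozin and Razgon (the complete-graph requirement being fulfilled by $K_{k+1}$ itself), so the class of $(\mathcal{F}\cup\{K_{k+1}\})$-free graphs has treewidth bounded by some constant $g(k)$. Setting $f(k):=g(k)$, every $\mathcal{F}$-free graph $G$ with $\omega(G)\le k$ satisfies $\tw(G)\le f(k)$; since $\mathcal{F}$-freeness is hereditary the same bound passes to every induced subgraph, yielding $(\tw,\omega)$-boundedness.

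For the ``only if'' direction I would argue by contrapositive, applying the theorem of Lozin and Razgon to $\mathcal{F}':=\mathcal{F}\cup\{K_4\}$. If the class of $\mathcal{F}$-free graphs were $(\tw,\omega)$-bounded, its subclass of clique number at most $3$, namely the class of $\mathcal{F}'$-free graphs, would have bounded treewidth, forcing $\mathcal{F}'$ to contain a graph from each of the four families. Now $K_4$ is not complete bipartite (it is not even bipartite), it does not lie in $\mathcal{S}$ (it contains cycles), and it does not lie in $L(\mathcal{S})$, so the three non-complete-graph witnesses must already come from $\mathcal{F}$, which is exactly what we wanted.

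The single verification that needs to be spelled out is $K_4\notin L(\mathcal{S})$: any graph $H$ with $L(H)=K_4$ has four pairwise incident edges and therefore must be the star $K_{1,4}$ (a triangle carries only three edges), but $K_{1,4}$ has four leaves and so does not belong to $\mathcal{S}$. This minor check aside, the argument is pure bookkeeping on top of the Lozin--Razgon dichotomy, which is why the paper tags the corollary as ``immediate''.
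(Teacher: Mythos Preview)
Your argument is correct and is exactly the natural derivation the paper has in mind when it calls the corollary an ``immediate consequence'' of the Lozin--Razgon theorem; the paper gives no separate proof, so there is nothing further to compare. (One cosmetic point: your ``only if'' paragraph is a direct argument, not a contrapositive, but the mathematics is fine as written.)
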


Since \Cref{conj:tree-alpha} is only meaningful for $(\tw,\omega)$-bounded graph classes, \Cref{cor:LR} implies that, when restricted to the case of finitely many forbidden induced subgraphs, \Cref{conj:tree-alpha} can be equivalently stated as follows.

\begin{conjecture}[Conjecture 1.2 in~\cite{dallard2024treewidth}]\label{conjecture:finitely-many-fis}
For any positive integer $t$ and any two graphs $S\in \mathcal{S}$ and $T\in L(\mathcal{S})$, the class of $\{K_{t,t},S,T\}$-free graphs has bounded tree-independence number.
\end{conjecture}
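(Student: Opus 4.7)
The plan is to attack \Cref{conjecture:finitely-many-fis} by a two-part induction, combining a reduction to the case where $S$ and $T$ are connected with a Ramsey-type dichotomy that converts any witness of large tree-independence number into an induced copy of $K_{t,t}$, $S$, or $T$.

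\textbf{Step 1 (Reduction to connected $S$ and $T$).} I would induct on $|V(S)| + |V(T)|$. If $S = S_1 \sqcup S_2$ with $S_1$ connected and $V(S_2) \ne \emptyset$, then either the graph $G$ under consideration is $S_1$-free, in which case we finish by a prior induction hypothesis, or $G$ has an induced $S_1$; in the latter case, for every induced $S_1 \subseteq G$, the graph $G - N[V(S_1)]$ is $\{K_{t,t}, S_2, T\}$-free, giving a recursive bound. Assembling these local bounds into a single bound on the tree-independence number of $G$ requires a gluing argument; a natural approach is to greedily extract a maximal vertex-disjoint collection of induced $S_1$ copies and control the residue via $K_{t,t}$-freeness. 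The analogous reduction applies to $T \in L(\mathcal{S})$, so it suffices to handle the case where $S$ is a connected tree with at most three leaves (a path or a subdivided claw) and $T$ is the line graph of such a tree.

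\textbf{Step 2 (Dichotomy via line graphs and Ramsey).} For connected $T \in L(\mathcal{S})$, \Cref{prop:line-graphs} implies that the line graph of any graph of sufficiently large treewidth contains an induced $T$. I would try to formalize the heuristic that if $G$ is $K_{t,t}$-free with large tree-independence number, then $G$ contains a large induced subgraph that is either \emph{close to} the line graph of a high-treewidth graph (forcing $T$) or \emph{close to} a sparse tree-like substructure (forcing $S$). The first alternative would be contradicted by \Cref{prop:line-graphs} after a locally-line-graph decomposition, while the second would be contradicted by a Ramsey argument that extracts an induced $P_k$ or a subdivided claw from any sufficiently long sparse substructure in a $K_{t,t}$-free graph.

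\textbf{Main obstacle.} The hardest step is making the dichotomy of Step 2 precise: the substructures produced by large tree-independence number are in general neither purely sparse nor purely line-graph-like, and the absence of $K_{t,t}$ only weakly rules out dense substructures. I expect the most difficult sub-case to be when $S$ is a subdivided claw and $T$ is the line graph of another subdivided claw, since neither exclusion is ``extreme'' in either direction and the Lozin--Razgon structure theorem does not pin down a single obstruction. Progress here likely requires a new structural understanding of $K_{t,t}$-free graphs of large tree-independence number, which is essentially the content of \Cref{conjecture:finitely-many-fis} in its full generality and the reason the conjecture remains open.
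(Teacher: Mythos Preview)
The paper does not prove \Cref{conjecture:finitely-many-fis}; it is stated there as an open conjecture (restated from~\cite{dallard2024treewidth}), and the paper only establishes special cases elsewhere. So there is no ``paper's own proof'' to compare against, and your proposal is not a proof but an outline of an attack that you yourself concede is incomplete.

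That said, both steps of your plan have genuine gaps beyond the obstacle you flag. In Step~1, the recursion gives you a bound on $\tin(G - N[V(S_1)])$ for each induced copy of $S_1$, but you never explain how to combine these into a bound on $\tin(G)$. A maximal packing of disjoint induced $S_1$'s leaves a residue that is $S_1$-free, yes, but the packing can be arbitrarily large and the closed neighbourhoods $N[V(S_1)]$ can overlap in uncontrolled ways; there is no general lemma that glues tree decompositions of the pieces $G - N[V(S_1)]$ into one for $G$ with bounded independence number. This is already a nontrivial structural statement, not a routine induction wrapper. In Step~2, the dichotomy ``close to a line graph of a high-treewidth graph'' versus ``close to a sparse tree-like structure'' is not a theorem anyone has proved for $K_{t,t}$-free graphs of large tree-independence number; it is essentially a restatement of what one would like to be true. \Cref{prop:line-graphs} tells you about actual line graphs, not about graphs that are merely ``locally-line-graph-like,'' and no known decomposition produces such pieces from the hypotheses available. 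In short, your plan correctly identifies where the difficulty lies, but neither step as written reduces the conjecture to anything known.
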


The conjecture is open already in the following special case (where $P_k$ denotes the $k$-vertex path).

\begin{conjecture}[Conjecture 1.3 in~\cite{dallard2024treewidth}]\label{conjecture:excluding-a-path}
For any two positive integers $k$ and $t$, the class of $\{P_k,K_{t,t}\}$-free graphs has bounded tree-independence number.
\end{conjecture}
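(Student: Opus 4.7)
The plan is to proceed by induction on $k$, keeping $t$ fixed. For $k \le 4$, the class of $P_4$-free graphs coincides with the class of cographs. For a $K_{t,t}$-free cograph $G$, I would recurse along its cotree: at a disjoint-union node the tree decompositions of the two sides can be glued at a trivial bag; at a join node $G = G_1 \ast G_2$, the $K_{t,t}$-freeness forces one side---say $G_2$---to satisfy $\alpha(G_2) < t$, and adding all of $V(G_2)$ to every bag of a tree decomposition of $G_1$ produces a tree decomposition of $G$ of bag-independence number at most $\max(\tin(G_1), t-1)$, since every independent set of $G$ lies entirely in $V(G_1)$ or entirely in $V(G_2)$. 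Induction along the cotree then yields $\tin(G) \le t-1$ for all $K_{t,t}$-free cographs, which settles the base case.

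For the inductive step with $k \ge 5$, I would combine a Gyárfás-style BFS argument with a balanced-separator approach. In a connected $P_k$-free graph, a BFS from any vertex has at most $k-2$ nonempty layers, so any two vertices are connected by a short geodesic. The already-known $(\tw,\omega)$-boundedness of $\{P_k,K_{t,t}\}$-free graphs produces, for every induced subgraph, a balanced separator of size bounded in terms of $k$, $t$, and the clique number. To pass from bounded \emph{size} to bounded \emph{bag-independence number}, I would try to locate a bounded-size separator $S$ whose closed neighborhood $N[S]$ has bounded independence number, and then assemble the global tree decomposition by the standard recursion on the components of $G - S$; this is the cleanest template for producing tree decompositions of bounded tree-independence number.

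The principal obstacle---and the reason this conjecture is still open---is precisely this upgrade step: $K_{t,t}$-freeness controls \emph{bicliques} inside neighborhoods but does not a priori bound \emph{independent sets} in them, and long induced paths (which $P_k$-freeness rules out) are the only obvious mechanism forcing such independent sets to appear. I would attack the neighborhood bound by a Ramsey-type argument on the BFS layering: given a putative large independent set $I \subseteq N[S]$, classify the vertices of $I$ by their attachment patterns to $S$ and by their BFS distances from a fixed root, and then try either to concatenate vertices of $I$ through $S$ along BFS geodesics to reconstruct an induced $P_k$, or to extract from a dense subfamily an induced $K_{t,t}$. Making this dichotomy quantitative and uniform over all small separators appears to require a structural decomposition for $\{P_k,K_{t,t}\}$-free graphs beyond what is presently available, and this is where I expect the real difficulty to lie.
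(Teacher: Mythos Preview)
This statement is an \emph{open conjecture} in the paper, not a theorem with a proof. The paper explicitly says the conjecture is open, lists only partial results (the cases $k\le 4$ for all $t$, and $k=5$ with $t=2$, plus the recent polylogarithmic bound of Chudnovsky et al.), and contributes further special cases (\Cref{thm:P3+P1,thm:tccn}) without claiming the full conjecture. So there is no paper proof for your proposal to be compared against.

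Your base case $k\le 4$ is correct and is essentially the known cotree argument for $K_{t,t}$-free cographs (this is \cite[Proposition~6.3]{dallard2024treewidth}, cited in the paper). Your inductive step, however, is not a proof but a programme, and you yourself flag the gap: passing from a small balanced separator to one whose closed neighborhood has bounded independence number is exactly the missing ingredient, and nothing in the Gy\'arf\'as BFS layering or the Ramsey dichotomy you sketch is known to supply it. In particular, the $(\tw,\omega)$-boundedness you invoke gives separators of size bounded in $k$, $t$, and $\omega$, but $\omega$ is not bounded in this class, so the separator sizes are not uniformly bounded to begin with; and even if they were, $K_{t,t}$-freeness does not bound $\alpha(G[N[S]])$ for a bounded-size set $S$ in a $P_k$-free graph (for $k\ge 6$ already, consider a vertex whose neighborhood is a large independent set dominated by a single far vertex). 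Your proposal thus correctly identifies the obstruction but does not overcome it, consistent with the conjecture's open status.
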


Partial results on \Cref{conjecture:finitely-many-fis,conjecture:excluding-a-path} are known.
The result of Lozin and Razgon implies \cref{conjecture:finitely-many-fis} when restricted to classes of graphs with bounded maximum degree.
This was generalized by the main result of Dallard et al.~\cite{dallard2024treewidth}, establishing \Cref{conjecture:finitely-many-fis} for the case when $K_{t,t}$ is replaced with the star $K_{1,t}$.
A recent result of Abrishami et al.~\cite[Theorem 1.1]{abrishami2024excludingcliquebicliquegraphs} implies the validity of \Cref{conjecture:finitely-many-fis} for the case when $S = T = sP_2$ for some positive integer $s$.\footnote{The graph $sP_2$ is the disjoint union of $s$ copies of $P_2$.}
In~\cite{dallard2024treewidth}, Dallard et al.~proved \Cref{conjecture:excluding-a-path} for the classes of $\{P_4,K_{t,t}\}$-free graphs (any $t$) and $\{P_5,C_4\}$-free graphs (note that $C_4 = K_{2,2}$).
The conjecture is open for the classes of $\{P_5,K_{t,t}\}$-free graphs for all {$t>2$} as well as for the class of $\{P_6,C_4\}$-free graphs.
Recently, Chudnovsky et al.~(see~\cite[Theorem 1.2]{chudnovsky2025treeindependencenumberv} for a more general result) showed that for any two positive integers $k$ and $t$, the tree-independence number of a $\{P_k,K_{t,t}\}$-free graph is at most polylogarithmic in the number of vertices.

\subsection*{Excluding an induced star}

Korhonen showed in \cite{MR4539481} that, for graphs with bounded maximum degree (thus excluding some fixed star as a subgraph) an induced variant of the Grid-Minor Theorem~\cite{MR0854606} holds for treewidth: for any planar graph $H$, any class of graphs with bounded degree excluding $H$ as an induced minor has bounded treewidth.\footnote{For the definitions of the induced minor relation and $H$-induced-minor-free graphs, we refer to \cref{sec:prelim}.}
Actually, Korhonen stated the result only for the case when $H$ is a $(k\times k)$-grid; however, every planar graph is an induced minor of a sufficiently large grid graph (see~\cite[Theorem 12]{campbell2024treewidthhadwigernumberinduced}).

When restricted to graph classes excluding an induced star, Korhonen's theorem combined with Ramsey's theorem (see~\cite{MR1576401}) implies that, $(\tw,\omega)$-boundedness holds whenever some fixed planar graph is also excluded as an induced minor.
In fact, as shown by Dallard et al.~\cite{dallard2024treewidth}, in the setting of graph classes excluding an induced star, \Cref{conj:tree-alpha} is equivalent to the following conjecture~(\!\cite[Conjecture 7.2]{dallard2024treewidth}).

\begin{sloppypar}
\begin{conjecture}
\label{conj:planar-H-no-star}
For every positive integer $t$ and every planar graph $H$, there exists an integer $N_{t,H}$ such that every $K_{1,t}$-free $H$-induced-minor-free graph $G$ satisfies \hbox{$\tin(G)\le N_{t,H}$}.
\end{conjecture}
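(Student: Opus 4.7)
The plan is to deduce \Cref{conj:planar-H-no-star} from two ingredients: the $(\tw,\omega)$-boundedness of the class in question, which is essentially folklore via Ramsey and Korhonen; and a new structural lemma that upgrades such a bound to a tree-independence bound under the $K_{1,t}$-freeness hypothesis.

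First, by Ramsey's theorem, any vertex of $G$ with degree at least $R(t,\omega(G)+1)$ would contain in its neighborhood either an independent set of size $t$, giving an induced $K_{1,t}$, or a clique of size $\omega(G)+1$, contradicting the definition of $\omega(G)$. Hence $\Delta(G) < R(t,\omega(G)+1)$, and Korhonen's induced-grid theorem (in the form noted in the excerpt for arbitrary planar $H$) gives $\tw(G) \le f(t,H,\omega(G))$ for some function $f$. The remaining task is to trade this $\omega$-dependent bag-size bound for an $\omega$-independent bound on bag independence.

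My approach to the second step is to construct a tree decomposition of $G$ each of whose bags is covered by at most $c=c(t,H)$ cliques of $G$; this automatically yields $\tin(G) \le c$, since any independent set in a union of $c$ cliques has size at most $c$. The construction would be a \emph{clique reduction}: cover $V(G)$ greedily by closed neighborhoods $N[v_1], N[v_2], \dots$, partition each $N[v_i]$ (which has independence number at most $t-1$) into a Ramsey-bounded number of cliques, and form an auxiliary graph $G^*$ on these cliques with adjacency whenever the corresponding cliques intersect or are joined by an edge of $G$. The $K_{1,t}$-free hypothesis should bound the maximum degree of $G^*$: a single clique can be adjacent to only boundedly many others before producing an induced $K_{1,t}$. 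If one can additionally show that $G^*$ is $H'$-induced-minor-free for some planar $H'$ depending only on $H$, then Korhonen's theorem applied to $G^*$ yields bounded treewidth, and a routine blow-up lifts a tree decomposition of $G^*$ to one of $G$ whose bags are each a union of boundedly many cliques.

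The main obstacle I foresee is justifying that the reduced graph $G^*$ remains $H'$-induced-minor-free for a planar $H'$ controlled by $H$. Naive clique-contractions can create induced minors that are not present in $G$, because distinct pieces of the cover sharing a vertex introduce artificial adjacencies and because the quotient need not reflect the induced-minor structure of $G$. Closing this gap seems to require either a refined "induced-minor balanced separator" theorem for $K_{1,t}$-free graphs, in the spirit of Korhonen's argument but sensitive to independence number rather than degree, or a direct obstruction argument showing that large $\tin$ in a $K_{1,t}$-free graph must force every fixed planar graph as an induced minor. This is where I expect the proposal to need a genuinely new idea; everything upstream of it is essentially bookkeeping on top of the Ramsey--Korhonen deduction.
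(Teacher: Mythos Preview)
The statement you are attempting to prove is labeled a \emph{conjecture} in the paper, not a theorem; the paper does not prove it and explicitly treats it as open, listing only special cases (e.g., $H\in\mathcal{S}$, $H=kC_3$, $H$ a cycle or wheel) where it is known. There is therefore no paper proof to compare against.

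Your proposal is honest about its own gap, and the gap is real. The first paragraph (Ramsey plus Korhonen gives $(\tw,\omega)$-boundedness) is correct and already well known, but that is precisely the easy direction; the whole point of the conjecture is whether one can remove the dependence on $\omega$. Your clique-reduction idea founders exactly where you say it does: the quotient graph $G^*$ obtained by collapsing cliques need not be $H'$-induced-minor-free for any planar $H'$, and there is no general mechanism known to transfer induced-minor obstructions through such a quotient. Moreover, the claimed degree bound on $G^*$ is also suspect: a single clique in a $K_{1,t}$-free graph can be adjacent to arbitrarily many other cliques (think of a large clique with many small cliques hanging off distinct vertices), so ``a single clique can be adjacent to only boundedly many others before producing an induced $K_{1,t}$'' is false as stated.

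For context, the paper's concluding section notes that the open Induced Grid Minor Conjecture of Gartland and Lokshtanov (\Cref{conj:Gartland}) --- every $H$-induced-minor-free graph has a balanced separator dominated by $f(H)$ vertices --- would imply \Cref{conj:planar-H-no-star}, via essentially the observation you use (a dominated set in a $K_{1,t}$-free graph has bounded independence number). So the ``refined balanced-separator theorem'' you speculate about at the end is exactly the conjectured missing ingredient, and it remains open.
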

\end{sloppypar}

\Cref{conj:planar-H-no-star} is known to hold for the following cases:
\begin{itemize}
    \item if $H\in \mathcal{S}$, as a consequence of the main result of Dallard et al.~\cite{dallard2024treewidth};
    \item if $H$ is the $4$-wheel (a $4$-cycle plus a universal vertex), $K_5$ minus an edge, or a graph of the form $K_{2,t}$ for some positive integer $t$, see~\cite{dallard2023treewidth} (in fact, in all these cases bounded tree-independence number holds even without excluding a star);
    \item if $H$ is the graph $kC_3$, the disjoint union of $k$ copies of the $3$-cycle, as shown recently by Ahn et al.~\cite{ahn2024coarseerdhosposatheorem};
    \item if $H$ is a cycle, as a consequence of a result by Seymour~\cite[Theorem 3.3]{MR3425243},
    and, even more generally, if $H$ is a wheel (that is, a cycle plus a universal vertex)~\cite{CHMW2025}.
\end{itemize}

\subsection*{Our results}

We identify further families of graph classes for which \Cref{conj:tree-alpha} holds.
We first observe that a result of Chudnovsky and Seymour on graphs with bounded splitness \cite{zbMATH06334633} leads to the resolution of \Cref{conj:tree-alpha} for hereditary graph classes that exclude some disjoint union of complete graphs.
As a consequence of this result and the fact that line graphs are claw-free, we establish a result analogous to \Cref{prop:line-graphs} for complements of line graphs.
With some additional arguments, we in fact obtain the following refined result, giving several equivalent characterizations of $(\tw,\omega)$-boundedness in this case.
For a graph class $\mathcal{G}$, we denote by $\overline{L(\mathcal{G})}$ the class of complements of line graphs of graphs in~$\mathcal{G}$.

\begin{restatable}{theorem}{thmColine}\label{thm:coline-graphs}
For every graph class $\mathcal{G}$, the following statements are equivalent.
\begin{enumerate}
\item\label{newproperty1} The class $\overline{L(\mathcal{G})}$ is $(\tw,\omega)$-bounded.
\item\label{newproperty5} The class $\overline{L(\mathcal{G})}$ has bounded tree-independence number.
\item\label{newproperty2} There exists a positive integer $s$ such that every graph in $\overline{L(\mathcal{G})}$ is $K_{s,s}$-free.
\item\label{newproperty4} There exists a positive integer $s$ such that every graph in $\overline{L(\mathcal{G})}$ has a vertex cover inducing a subgraph with independence number at most $s$.
\item\label{newproperty3} There exists a positive integer $s$ such that every graph in $\mathcal{G}$ is $2K_{1,s}$-subgraph-free.
\end{enumerate}
\end{restatable}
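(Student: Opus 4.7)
The plan is to establish the cycle $(2)\Rightarrow(1)\Rightarrow(3)\Leftrightarrow(5)\Rightarrow(4)\Rightarrow(2)$. The implication $(2)\Rightarrow(1)$ is the standard Dallard--Milanič--Štorgel fact used throughout the excerpt. For $(1)\Rightarrow(3)$ I would just note that $K_{s,s}$ has clique number $2$ and treewidth $s$, so arbitrarily large induced copies of $K_{s,s}$ in graphs of $\overline{L(\mathcal{G})}$ would force the $(\tw,\omega)$-bounding function to be unbounded at $2$.

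The translation $(3)\Leftrightarrow(5)$ reduces to unpacking what an induced $K_{s,s}$ in $\overline{L(H)}$ means: two edge sets $A,B\subseteq E(H)$ of size $s$, with the edges inside $A$ (respectively $B$) pairwise sharing a vertex in $H$, and with no edge of $A$ meeting any edge of $B$. For $s\geq 4$, any $s$ pairwise intersecting edges of a simple graph must share a common vertex (the only alternative, three edges of a triangle, cannot accommodate a fourth), so $A$ and $B$ are stars; the non-meeting condition then forces them to be vertex-disjoint, yielding a $2K_{1,s}$ subgraph of $H$. The reverse direction (from $2K_{1,s}$ in $H$ to an induced $K_{s,s}$ in $\overline{L(H)}$) is immediate, so absorbing the shift $s\mapsto\max(s,4)$ into the constant gives the equivalence.

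For $(5)\Rightarrow(4)$, the key observation is that a $2K_{1,s}$-subgraph-free graph $H$ has at most one vertex of degree at least $3s$: if $u\neq v$ both satisfy $\deg(u),\deg(v)\geq 3s$, one greedily picks $S_v\subseteq N(v)\setminus\{u\}$ of size $s$ and then $S_u\subseteq N(u)\setminus(\{v\}\cup S_v)$ of size $s$, producing two vertex-disjoint stars in $H$. Let $w$ be this distinguished vertex (if it exists) and set $F=E(H)\setminus\{e\in E(H):w\in e\}$, viewed as a subset of $V(\overline{L(H)})$. Then $V(\overline{L(H)})\setminus F$ is a star in $H$, hence an independent set in $\overline{L(H)}$, so $F$ is a vertex cover of $\overline{L(H)}$; and every independent set in $\overline{L(H)}[F]$ is a set of pairwise intersecting edges of $H-w$, i.e.\ a star or a triangle in a graph of maximum degree less than $3s$, hence of size less than $3s$.

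Finally, $(4)\Rightarrow(2)$ is witnessed by a star-shaped tree decomposition: given a vertex cover $C$ of $G\in\overline{L(\mathcal{G})}$ with $\alpha(G[C])\leq s$ and $I=V(G)\setminus C$ (an independent set), take the underlying tree to be a star with center bag $C$ and a leaf bag $C\cup\{v\}$ for each $v\in I$; the three tree-decomposition axioms are immediate and each bag has independence number at most $s+1$. The step I expect to require the most care is the degree argument in $(5)\Rightarrow(4)$; everything else is straightforward bookkeeping via the correspondence between edges of $H$ and vertices of $\overline{L(H)}$.
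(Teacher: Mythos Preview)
Your proposal is correct and follows essentially the same chain of implications as the paper: $(2)\Rightarrow(1)\Rightarrow(3)\Rightarrow(5)\Rightarrow(4)\Rightarrow(2)$, with the same constructions (the ``at most one high-degree vertex'' argument for $(5)\Rightarrow(4)$ and the star-shaped tree decomposition for $(4)\Rightarrow(2)$), differing only in inessential constants ($3s$ versus the paper's $2s$). One small point of contrast worth noting: the paper additionally invokes its Corollary~\ref{cor:coline} (which rests on the Chudnovsky--Seymour splitness theorem) to get $(1)\Leftrightarrow(2)$ directly, but as your cycle shows, this is redundant---the equivalence already follows from the elementary implications, so your write-up is slightly more self-contained.
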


Before discussing our other results, let us put the result of \Cref{thm:coline-graphs} in perspective.
It can be observed that the forbidden induced subgraph characterization of the class of line graphs due to Beineke (see~\cite{zbMATH03321962}) implies that for any graph $G$, the complement of its line graph does not contain any two vertex-disjoint cycles $C$ and $C'$ that are \emph{independent}, in the sense that there is no edge between $C$ and $C'$ (see~\Cref{prop:co-line}).
In other words, for every graph $G$, the graph $\overline{L(G)}$ is $\mathcal{O}_2$-free, where a graph is said to be \emph{$\mathcal{O}_k$-free} if it does not contain $k$ pairwise independent cycles.
This motivates the question of whether the result of \Cref{thm:coline-graphs} can be generalized to classes of $\mathcal{O}_k$-free graphs for some $k\ge 2$, that is, whether for $\mathcal{O}_k$-free graphs, excluding some complete bipartite graph $K_{t,t}$ as an induced subgraph bounds the tree-independence number.
However, this is known not be the case, not even for $\mathcal{O}_2$-free graphs and $t = 2$ (see~\cite{MR4723425}).

What if instead of excluding a complete bipartite graph $K_{t,t}$ we exclude a star, $K_{1,t}$?
In this case, since a graph $G$ is $\mathcal{O}_k$-free if and only if $G$ is $kC_3$-induced-minor-free, the aforementioned result of Ahn et al.~\cite{ahn2024coarseerdhosposatheorem} proving \Cref{conj:planar-H-no-star} for the case when $H$ is the graph $kC_3$ shows that for any class of $\mathcal{O}_k$-free graphs, excluding a star does imply bounded tree-independence number.

\begin{theorem}[Ahn et al.~\cite{ahn2024coarseerdhosposatheorem}]\label{thm:K1t-free-Ok-free-tree-alpha-intro}
For every two integers $k\ge 1$ and $t\ge 2$ there exists an integer $N_{k,t}$ such that if $G$ is $K_{1,t}$-free $\mathcal{O}_k$-free graph, then $\tin (G) \le N_{k,t}$.
\end{theorem}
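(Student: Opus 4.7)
The plan is to compose three ingredients already visible in the paper: Ramsey's theorem, Korhonen's induced-minor theorem for bounded-degree graphs (recalled in the introduction), and the Chudnovsky--Seymour based observation flagged at the beginning of ``Our results'' that \Cref{conj:tree-alpha} holds on every hereditary graph class excluding some disjoint union of complete graphs. Let $\mathcal{G}_{k,t}$ denote the class of $K_{1,t}$-free $\mathcal{O}_k$-free graphs; the aim is to show that this class has bounded tree-independence number.

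First I would show that $\mathcal{G}_{k,t}$ is $(\tw,\omega)$-bounded. Given $G\in \mathcal{G}_{k,t}$ with $\omega(G)=\omega$, the $K_{1,t}$-freeness of $G$ gives $\alpha(G[N(v)])\le t-1$ at every vertex $v$, and the $K_{\omega+1}$-freeness gives $\omega(G[N(v)])\le \omega-1$. Ramsey's theorem therefore forces $|N(v)|\le R(t,\omega)-1$, so $\Delta(G)\le R(t,\omega)-1$. Using the equivalence noted in the excerpt between $\mathcal{O}_k$-freeness and $kC_3$-induced-minor-freeness, and applying Korhonen's theorem to the bounded-degree, $kC_3$-induced-minor-free graph $G$, I would conclude $\tw(G)\le \phi(\Delta(G),k)\le f(\omega,t,k)$ for some function $f$. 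This is exactly $(\tw,\omega)$-boundedness of $\mathcal{G}_{k,t}$.

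In the second step I would observe that $\mathcal{G}_{k,t}$ is hereditary and that no graph in it contains $kK_3$ as an induced subgraph, since $k$ vertex-disjoint triangles would be $k$ pairwise independent cycles. Because $kK_3$ is a disjoint union of complete graphs, the Chudnovsky--Seymour based observation applies to $\mathcal{G}_{k,t}$, upgrading $(\tw,\omega)$-boundedness to bounded tree-independence number and producing the desired constant $N_{k,t}$.

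The only step calling for genuine care is the invocation of Korhonen's theorem: as the excerpt points out, the cited reference formally handles only $(k\times k)$-grid induced minors. However, every planar graph, and in particular $kC_3$, embeds as an induced minor of a sufficiently large grid, and induced-minor-containment is transitive; hence $kC_3$-induced-minor-freeness of $G$ implies freeness from that larger grid as an induced minor, which is the hypothesis Korhonen requires. This is the only place in the argument where one has to unwind anything beyond a direct citation.
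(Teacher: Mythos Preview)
Your argument is correct, and it is genuinely different from the paper's own proof in \Cref{sec:star}. The paper does not combine Korhonen's theorem with \Cref{thm:cpg}; instead it works directly with the structure of $\mathcal{O}_k$-free graphs via the results of Bonamy et al.~(\Cref{thm:fvsLog}), establishes a bounded feedback vertex set with controlled independence number (\Cref{lem:fvs,lem: bddfvs}), and then obtains a tree decomposition by adding this set to every bag of a decomposition of the remaining forest. What your route buys is brevity: once \Cref{thm:cpg} is available, the proof collapses to observing that $kK_3$ is an excluded clique partition graph and that the class is $(\tw,\omega)$-bounded, the latter being exactly the remark the introduction makes about Korhonen plus Ramsey. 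What the paper's route buys is quantitative control: it yields an explicit bound $N_{k,t}=\mathcal{O}(20^k t)$, in particular linear in $t$, whereas your argument stacks Ramsey numbers, Korhonen's grid-minor function, and the Chudnovsky--Seymour splitness bound, giving a constant that is effectively uncomputable in practice. Since the paper's stated contribution here is a \emph{simpler proof with reasonable bounds} (compared to Ahn et al.), the explicit-bound route is the one that matches the paper's goals; your argument is nonetheless a perfectly valid qualitative proof and a nice illustration of how \Cref{thm:cpg} can be leveraged.
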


The proof of \cref{thm:K1t-free-Ok-free-tree-alpha-intro} is rather involved.
As our next result, we give a simpler proof of \cref{thm:K1t-free-Ok-free-tree-alpha-intro}, building on the work of Bonamy et al.~\cite{MR4723425}.
The bound given by our proof matches the dependency on $t$ with the bound given by Ahn et al.~\cite{ahn2024coarseerdhosposatheorem}, which is linear in $t$.
The dependency on $k$ in the bound given by Ahn et al.~\cite{ahn2024coarseerdhosposatheorem} in $k\log k$, while our proof gives a bound that is exponential in $k$.
So, while our approach gives suboptimal bounds, the proof is much shorter.

\medskip
Our last set of results deals with the case of finitely many forbidden induced subgraphs.
We come closer to a positive resolution of the simplest open cases of  \Cref{conjecture:excluding-a-path}, namely the classes of $\{P_5,K_{t,t}\}$-free graphs (for $t>2$) and $\{P_6,C_4\}$-free graphs.
The graph $P_5$ has three maximal proper induced subgraphs, namely $P_4$, $P_3+P_1$, and $2P_2$.
Recall that for \hbox{$H\in \{P_4,2P_2\}$}, it is already known that for every positive integer $t$, the class of $\{H,K_{t,t}\}$-free graphs has bounded tree-independence number (see~\cite{dallard2024treewidth} and \cite{abrishami2024excludingcliquebicliquegraphs}, respectively).
We prove boundedness of tree-independence number for the classes of $\{P_3+P_1,K_{t,t}\}$-free graphs, as well as of the class of $\{P_4+P_1,C_4\}$-free graphs.
In fact, for both cases we are able to prove something stronger.

For the former case we characterize the exact value of the tree-independence number of a $(P_3 + P_1)$-free graph $G$ in terms of the largest $t$ such that $K_{t,t}$ is an induced subgraph of $G$.
Following Dallard et al.~\cite{dallard2024treewidth}, who obtained a similar result for $P_4$-free graphs, we refer to this parameter as the \emph{induced biclique number} of $G$ and denote it by $\ibn(G)$.

\begin{restatable}{theorem}{thmPthreePone}\label{thm:P3+P1}
Let $G$ be a $( P_3 + P_1)$-free graph with at least one edge.
Then, $\tin(G) = \ibn(G)$, unless $G$ is $C_4$-free but contains an induced $C_5$, in which case $\tin(G) = 2$.
\end{restatable}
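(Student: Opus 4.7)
The plan is to exploit the structural characterization that $G$ is $(P_3+P_1)$-free if and only if $\overline{G}$ is paw-free, together with Olariu's theorem that every connected component of a paw-free graph is either triangle-free or complete multipartite. Translating to $G$, this yields a trichotomy: either (i) $G$ is a disjoint union of cliques (equivalently, $G$ is disconnected), or (ii) $G$ is connected with $\alpha(G)\leq 2$ (a single co-component), or (iii) $G$ is a join of at least two co-components. The lower bound $\tin(G)\geq \ibn(G)$ follows from the standard fact $\tin(K_{t,t})=t$ and induced-subgraph monotonicity of $\tin$, while $\tin(G)\geq 2$ whenever $G$ contains an induced $C_5$, since $C_5$ is not chordal.

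For the upper bound I would induct on $|V(G)|$. Case (i) gives $\tin=1=\ibn$ with no induced $C_5$. In case (ii), a single bag equal to $V(G)$ witnesses $\tin(G)\leq 2$; if $G$ contains $C_4$ then $\ibn(G)\geq 2=\tin(G)$; if $G$ is $C_4$-free but contains $C_5$, we are in the exception with $\tin(G)=2$; otherwise $G$ is $\{C_4,C_5\}$-free, and the quick observation that any induced cycle $C_k$ with $k\geq 6$ yields an induced $P_3+P_1$ (by taking $v_1v_2v_3$ together with $v_5$) shows that $G$ is chordal, hence $\tin(G)=1=\ibn(G)$.

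The main case is (iii): $G=G[A_1]\ast\cdots\ast G[A_k]$ with $k\geq 2$, indices ordered so that $\alpha_1:=\alpha(G[A_1])\geq\alpha_2\geq\cdots$. Take an optimal tree decomposition of $G[A_1]$ and augment every bag with $A_2\cup\cdots\cup A_k$; the tree-decomposition axioms are routine to verify, and since in the join any independent set inside an augmented bag must lie within a single part, the resulting bag-independence number is $\max\{\tin(G[A_1]),\alpha_2\}$. On the other hand, choosing $\alpha_2$ independent vertices from each of $A_1$ and $A_2$ produces an induced $K_{\alpha_2,\alpha_2}$ in $G$, so $\ibn(G)\geq\max\{\ibn(G[A_1]),\alpha_2\}$. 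When $G[A_1]$ is not in the exception, the inductive identity $\tin(G[A_1])=\ibn(G[A_1])$ combines these two estimates to give $\tin(G)\leq\ibn(G)$.

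The main obstacle is the subcase where $G[A_1]$ is itself in the exception, so induction only supplies $\tin(G[A_1])=2$ while $\ibn(G[A_1])=1$. I resolve this by splitting on $\alpha_2$. If $\alpha_2\geq 2$, two parts each have independence at least $2$, forcing an induced $C_4$ in $G$, whence $\ibn(G)\geq\alpha_2$ and $\tin(G)\leq\max\{2,\alpha_2\}\leq\ibn(G)$. If $\alpha_2=1$, then every $A_j$ with $j\geq 2$ is a singleton (any co-component that is a clique consists of one vertex), and a short case analysis of the possible splits of four vertices between $A_1$ and the singletons shows that no cross-part induced $C_4$ can arise; since $G[A_1]$ already supplies an induced $C_5$, the whole graph $G$ itself then lies in the exception, and the tree decomposition delivers $\tin(G)\leq\max\{2,1\}=2$, matching the claim.
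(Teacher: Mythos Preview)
Your approach is essentially the paper's: both invoke Olariu's characterization of paw-free graphs for $\overline{G}$ and induct via the join structure, augmenting a tree decomposition of one side by all vertices of the other. The paper streamlines things by first proving the uniform bound $\tin(G)\le\max\{\ibn(G),2\}$ (inducting on the number of co-components and adding the side of smaller independence number to every bag) and only afterward splitting into the three end cases (induced $C_4$ present; $C_4$-free with an induced $C_5$; chordal); this avoids threading the exception through the induction and the extra subcase split on~$\alpha_2$ that you need.

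Two small points to patch. First, your inductive call on $G[A_1]$ requires $G[A_1]$ to have an edge, which can fail (e.g., when $\overline{G}[A_1]$ is complete); just note directly that then $\tin(G[A_1])\le 1$, and your bound $\tin(G)\le\max\{1,\alpha_2\}\le\ibn(G)$ still goes through since $G$ has an edge. Second, in the branch ``$G[A_1]$ not in the exception'' you conclude $\tin(G)=\ibn(G)$ but do not say why $G$ itself is not in the exception; this is immediate (an induced $C_5$ would force $\tin(G)\ge 2$ while $C_4$-freeness gives $\ibn(G)=1$), but it should be stated.
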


In the latter case we prove boundedness of a parameter that is an upper bound for the tree-independence number, namely the so-called \emph{tree-clique-cover number}, denoted by $\tccn$, where, instead of bounding the size of an independent set in a bag of a tree decomposition, we bound the number of cliques needed to cover the vertices of a bag.

\begin{restatable}{theorem}{thmtccn}\label{thm:tccn}
If $G$ is a $\{P_4+P_1,C_4\}$-free graph, then $\tccn(G)\le 3$.
\end{restatable}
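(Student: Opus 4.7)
I plan to induct on $|V(G)|$ and construct the tree decomposition by a two-step recipe: pick a vertex $v$ whose closed neighbourhood has a good clique cover, then lift a clique-tree decomposition of the (trivially perfect) graph $G - N[v]$ to $G$ by adding $N[v]$ to every bag.

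First the easy reductions. If $G$ is disconnected, apply the induction hypothesis to each component and take the disjoint union of the resulting tree decompositions. If $G$ contains no induced $P_4$, then $G$ is $\{P_4, C_4\}$-free, i.e.\ trivially perfect, and its clique tree is a tree decomposition in which every bag is a single clique, giving $\tccn(G) = 1$.

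In the remaining case, $G$ is connected and admits an induced $P_4$, which by the $(P_4+P_1)$-free condition is automatically a dominating set of $G$. The key structural remark is that for \emph{every} vertex $v \in V(G)$ the graph $G - N[v]$ is $\{P_4,C_4\}$-free, hence trivially perfect: any induced $P_4$ in $G - N[v]$, together with $v$, would form an induced $P_4+P_1$ in $G$. The strategy is now to locate a vertex $v$ such that $G[N(v)]$ is the union of at most two cliques (equivalently, $\overline{G[N(v)]}$ is bipartite), take a clique tree $(T, \{K_t\}_{t \in V(T)})$ of $G - N[v]$ with every $K_t$ a clique of $G$, and define $B_t := K_t \cup N[v]$ for every $t \in V(T)$. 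A routine verification (since $N[v]$ sits in every bag and vertices of $V \setminus N[v]$ inherit their connected occurrences from the original clique tree) shows that $(T, \{B_t\}_{t \in V(T)})$ is a tree decomposition of $G$; each bag is covered by $K_t$ together with the two cliques covering $N[v]$, for a total of at most three cliques, yielding $\tccn(G) \le 3$.

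The main obstacle is the structural lemma asserting the existence of such a vertex $v$. The obstructions to $G[N(v)]$ being a union of two cliques are the odd antiholes in $G[N(v)]$; the two smallest are $3K_1$ (meaning $v$ is the centre of a claw) and $C_5$ (meaning $v$ is the hub of an induced $W_5$), both of which genuinely occur in $\{P_4+P_1,C_4\}$-free graphs. The plan for the lemma is a case analysis based on a dominating induced $P_4 = abcd$: classify every vertex of $V(G) \setminus \{a,b,c,d\}$ by its adjacency pattern to $\{a,b,c,d\}$ and use the $C_4$-free condition to rule out the four types $\{a,c\}$, $\{b,d\}$, $\{a,c,d\}$, $\{a,b,d\}$, each of which would create an induced $C_4$ with three vertices of the $P_4$. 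The $(P_4+P_1)$-free condition then constrains how vertices of the remaining eleven types can interact with one another; from this classification one exhibits a vertex---typically an endpoint of the dominating $P_4$ or a vertex whose unique $P$-adjacency is tightly controlled---whose neighbourhood admits the required two-clique cover. Ruling out the simultaneous presence of all the obstructions is the delicate point, and it is where both forbidden subgraphs crucially play a role together.
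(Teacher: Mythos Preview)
Your approach is entirely different from the paper's. The paper does not build a tree decomposition by hand at all. It first shows that every $\{P_4+P_1,C_4\}$-free graph is $K_{2,3}$-induced-minor-free, by checking that each of the four Truemper-type obstructions (long prism, pyramid, theta, broken wheel) contains an induced $P_4+P_1$ or $C_4$. It then cites a known bound giving $\tin(G)\le 3$ for $K_{2,3}$-induced-minor-free graphs, and finally upgrades this to $\tccn(G)\le 3$ by applying, bag by bag, the result of Brause et al.\ that every $\{2K_2,\text{gem}\}$-free graph $H$ satisfies $\chi(H)\le\max\{\omega(H),3\}$ (used in the complement, so $\theta(G[X_t])\le\max\{\alpha(G[X_t]),3\}\le 3$). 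The whole proof is a short case check plus two citations.

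Your route is self-contained and constructive, and the reductions you state are correct: any induced $P_4$ is dominating, $G-N[v]$ is trivially perfect for every $v$, and adding $N[v]$ to every bag of a clique tree of $G-N[v]$ does yield a valid tree decomposition whose bags are covered by $K_t$ together with a clique cover of $N[v]$. But the entire argument hinges on the structural lemma that some vertex $v$ has $\theta(G[N(v)])\le 2$, and you have not proved it. You correctly observe that, since $G[N(v)]$ is $C_4$-free, the only obstructions are an induced $3K_1$ or an induced $C_5$ in $N(v)$; however, the plan you sketch---classify vertices by their trace on a dominating $P_4$, discard four types via $C_4$, then ``exhibit a vertex'' among the remaining eleven types---stops precisely at the step you yourself call ``the delicate point''. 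Nothing in the proposal explains how the case analysis closes, nor rules out a configuration in which every vertex is simultaneously a claw centre or a $W_5$-hub. Until this lemma is actually established, the proposal has a genuine gap. If the lemma does hold, your argument would give a more elementary proof than the paper's, avoiding both the $K_{2,3}$-induced-minor machinery and the Brause et al.\ chromatic bound; but as written, the key step is missing.
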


\noindent{\bf Structure of the paper.}
After collecting the basic definitions and results from the literature in \Cref{sec:prelim}, we discuss, in \Cref{sec:coline}, \Cref{conj:tree-alpha} for hereditary graph classes that exclude some disjoint union of complete graphs, as well as for complements of line graphs, proving in particular \Cref{thm:coline-graphs}.
In \Cref{sec:star}, we give a simpler proof of \cref{thm:K1t-free-Ok-free-tree-alpha-intro}.
In \Cref{sec:P3P1-free}, we prove \Cref{thm:P3+P1}.
In \Cref{sec:P4P1C4-free}, we prove \Cref{thm:tccn}.
We conclude the paper with some open problems in \Cref{sec:conclusion}.

\section{Preliminaries}\label{sec:prelim}

All the graphs considered in this paper are finite, simple (that is, without loops or parallel edges), and undirected.
An \emph{independent set} in a graph $G$ is a set of pairwise nonadjacent vertices.
The \emph{independence number} of a graph $G$, denoted $\alpha(G)$, is the size of a largest independent set in $G$.
A \emph{clique} in a graph $G$ is a set of vertices such that every two distinct vertices are adjacent.
The \emph{clique number} of a graph $G$, denoted by $\omega(G)$, is the size of a largest clique in $G$.
Given a set $S\subseteq V(G)$, we denote by $G[S]$ the subgraph of $G$ induced by $S$.
A \emph{vertex cover} in a graph $G$ is a set of vertices containing at least one endpoint of every edge of $G$.
For $v\in V(G)$, the set $N_G(v) = \{ u\in V(G) : vu\in E(G)\}$ is the \emph{neighborhood} of $v$,  and $N_G[v] = N(v) \cup \{v\}$ is the \emph{closed neighborhood} of $v$.
The \emph{degree} of a vertex $v\in V(G)$, denoted by $d_G(v)$, is the number of edges incident with $v$, or equivalently, $d_G(v)=|N(v)|$.
If the graph is clear from the context, we may just write $N(v)$, $N[v]$, and $d(v)$ instead of $N_G(v)$, $N_G[v]$, and $d_G(v)$, respectively.
The graph isomorphism relation will be denoted by $\cong$.

Given a graph $G$ and an edge $uv$ in $G$, \emph{subdividing} the edge $uv$ means deleting the edge from the graph and adding a new vertex $w$ adjacent exactly to $u$ and $v$.
\emph{Contracting} an edge $e=uv$ in a graph $G$ means replacing $u$ and $v$ with a single new vertex $w$ so that the vertices adjacent to $w$ are exactly the vertices of $G$ that are adjacent to $u$ or $v$.

A graph class is \emph{hereditary} if for any graph $G$ in the class, every induced subgraph of $G$ also belongs to the class.
Given two graphs $G$ and $H$, we say that $G$ is \emph{$H$-free} if it does not contain an induced subgraph isomorphic to $H$.
Similarly, $G$ is \emph{$H$-subgraph-free} it does not contain a subgraph isomorphic to $H$.
For every set $S\subseteq V(G)$, we denote by $G-S$ the graph $G[V(G)\setminus S]$, and similarly, for every $u\in V(G)$, $G-u$ corresponds to the graph $G[V(G)\setminus \{u\}]$.
A graph $H$ is an \emph{induced minor} of a graph $G$ if a graph isomorphic to $H$ can be obtained from $G$ by a sequence of vertex deletions and edge contractions.
If a graph $G$ does not contain a graph $H$ as an induced minor, then we say that $G$ is \emph{$H$-induced-minor-free}.

A \emph{forest} is an acyclic graph (i.e., a graph without any cycles).
The \emph{girth} of a graph $G$ is the minimum length of a cycle in $G$ (and $\infty$ if $G$ is a forest).
A \emph{feedback vertex set} $X$ in a graph $G$ is a set of vertices of $G$ such that $G-X$ is a forest.
The minimum size of a feedback vertex set in $G$ is denoted by $\fvs (G)$.
The \emph{cycle rank} of a graph $G$ is defined as $r(G)= |E(G)| - |V(G)| + |C(G)|$ where $C(G)$ denotes the set of connected components of $G$.
The cycle rank is exactly the number of edges of $G$ which must be removed to make $G$ a forest.

A \emph{complete graph} of order $n$ is denoted by $K_n$ (in particular, $K_3$ is called a \emph{triangle}), and a complete graph of order $n$ with a single edge removed is denoted by $K_n^-$.
A \emph{complete multipartite graph} is any graph $G$ such that there exists a partition of its vertex set into $k\ge 1$ parts such that two distinct vertices are adjacent in $G$ if and only if they belong to different parts.
Given two integers $m,n\ge 0$, the \emph{complete bipartite graph} $K_{m,n}$ is a complete multipartite graph with exactly two parts, one of size $m$ and one of size $n$.
When $m = n$, we speak about \emph{balanced} complete bipartite graphs.
A \emph{claw} is a complete bipartite graph $K_{1,3}$.
The path graph and the cycle graph with $n$ vertices are denoted by $P_n$ and $C_n$, respectively.
Given a graph $G$ and an integer $k\ge 0$, a \emph{path of length $k$ in} $G$ is a sequence $P = v_1\ldots v_{k+1}$ of pairwise distinct vertices of $G$ such that $v_iv_{i+1}\in E(G)$ for all $i = 1,\ldots, k$.
The vertices $v_1,v_{k+1}$ are called the \emph{extremities} of $P$.
Vertices of $P$ that are not its extremities are the \emph{internal} vertices of $P$.
A path $v_1\ldots v_k$ with $k\ge 3$ and $v_kv_1\in E(G)$ is a \emph{cycle of length $k$ in} $G$.
An edge $v_iv_j\in E(G)$ with $|i-j|>1$ is a \emph{chord} of the path; the path is said to be \emph{chordless} if it has no chords.
Chordless cycles are defined similarly.
A \emph{chordal graph} is a graph in which all chordless cycles have length $3$.

A \emph{tree decomposition} of a graph $G$ is a pair $\mathcal{T}=(T, \{ X_t \}_{t\in V(T)})$ where $T$ is a tree and every node $t$ of $T$ is assigned a vertex subset $X_t\subseteq V(G)$ called a \emph{bag} such that the following conditions are satisfied: every vertex $v$ is in at least one bag, for every edge $uv\in E(G)$ there exists a node $t\in V(T)$ such that $X_t$ contains both $u$ and $v$, and for every vertex $u\in V(G)$ the subgraph $T_u$ of $T$ is induced by the set $\{ t\in V(T) : u\in X_t \}$ is connected, (that is, a tree).
The \emph{width} of $\mathcal{T}$, denoted by $width(\mathcal{T})$, is the maximum value of $|X_t|-1$ over all $t\in V(T)$.
The \emph{treewidth} of a graph $G$, denoted by $\tw(G)$, is defined as the minimum width of a tree decomposition of $G$.
The \emph{independence number} of $\mathcal{T}$ is defined as
\[\alpha(\mathcal{T}) = \max_{t\in V(T)} \alpha(G[X_t])\,.\]
The \emph{tree-independence number} of $G$, denoted by $\tin (G)$, is the minimum independence number among all possible tree decompositions of $G$.
A graph class $\mathcal{G}$ is said to be \emph{$(\tw, \omega)$-bounded} if there exists a function $f:\N \rightarrow \N$ such that for every graph $G\in \mathcal{G}$ and every induced subgraph $G'$ of $G$, we have $\tw(G')\le f(\omega (G'))$.

\begin{sloppypar}
We recall two results from~\cite{DBLP:journals/jctb/DallardMS24,zbMATH07436460}, giving a sufficient and a necessary condition for $(\tw, \omega)$-boundedness, respectively.
The first one relates $(\tw, \omega)$-boundedness to bounded tree-independence number (see \cite[Lemma 3.2]{DBLP:journals/jctb/DallardMS24}).
\end{sloppypar}

\begin{lemma}\label{lem:bdd-tree-alpha-tw-omega-bdd}
Every graph class with bounded tree-independence number is $(\tw, \omega)$-bounded.
\end{lemma}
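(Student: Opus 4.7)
The plan is to apply Ramsey's theorem to the bags of an optimal tree decomposition of the given graph. Suppose $\mathcal{G}$ is a graph class with $\tin(G)\le k$ for every $G\in \mathcal{G}$, and let $G\in \mathcal{G}$ and $G'$ be an induced subgraph of $G$. First I would observe that the tree-independence number is monotone under induced subgraphs: given a tree decomposition $(T,\{X_t\}_{t\in V(T)})$ of $G$ with $\alpha(G[X_t])\le k$ for each $t$, the pair $(T,\{X_t\cap V(G')\}_{t\in V(T)})$ is a tree decomposition of $G'$ (the three defining properties are preserved under restricting each bag to $V(G')$), and $\alpha(G'[X_t\cap V(G')])\le \alpha(G[X_t])\le k$ because $G'[X_t\cap V(G')]$ is an induced subgraph of $G[X_t]$. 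Hence $\tin(G')\le \tin(G)\le k$.

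Next, I would fix an optimal tree decomposition $\mathcal{T}'=(T',\{X'_t\}_{t\in V(T')})$ of $G'$ with $\alpha(G'[X'_t])\le k$ for every $t\in V(T')$, and set $\omega=\omega(G')$. Since $\omega(G'[X'_t])\le \omega$ and $\alpha(G'[X'_t])\le k$, Ramsey's theorem forces $|X'_t|< R(k+1,\omega+1)$, as otherwise $G'[X'_t]$ would contain either an independent set of size $k+1$ or a clique of size $\omega+1$, both impossible. It follows that $\tw(G')\le R(k+1,\omega+1)-2$, so the function $f(\omega)=R(k+1,\omega+1)-2$ (which depends only on $k$ and $\omega$) witnesses that $\mathcal{G}$ is $(\tw,\omega)$-bounded.

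There is no genuine obstacle in this argument; the only subtlety worth emphasizing is induced-subgraph monotonicity of $\tin$, which is essential because the definition of $(\tw,\omega)$-boundedness quantifies over induced subgraphs and not merely over members of $\mathcal{G}$ themselves.
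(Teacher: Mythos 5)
Your proof is correct and follows essentially the same route as the source the paper cites for this lemma (Lemma~3.2 of Dallard, Milani\v{c}, and \v{S}torgel), namely induced-subgraph monotonicity of $\tin$ followed by a Ramsey bound $|X'_t|\le R(k+1,\omega(G')+1)-1$ on each bag. Nothing further is needed.
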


The second gives a particular family of obstructions to $(\tw, \omega)$-boundedness (see, e.g.,~\cite[Lemma 2.7]{zbMATH07436460}).

\begin{lemma}\label{lem:cbp}
The class of balanced complete bipartite graphs is not $(\tw, \omega)$-bounded.
\end{lemma}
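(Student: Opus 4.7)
The plan is to exhibit, directly within the class of balanced complete bipartite graphs, a sequence of graphs whose clique numbers stay bounded while their treewidths tend to infinity. Since $(\tw,\omega)$-boundedness demands a function $f:\N\to\N$ with $\tw(G')\le f(\omega(G'))$ for every $G'$ in the class (and every induced subgraph thereof), producing graphs with $\omega=2$ and arbitrarily large treewidth immediately rules out the existence of such an $f$.

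First I would observe that $K_{n,n}$ is bipartite and, for $n\ge 1$, contains an edge, so $\omega(K_{n,n})=2$ uniformly in $n$. The main content is therefore a lower bound on $\tw(K_{n,n})$. The cleanest route is via minors: treewidth is monotone under the minor relation, and $\tw(K_n)=n-1$ is a standard fact (witnessed, e.g., by the observation that the bag containing the entire vertex set of $K_n$ is forced, or by its property of being the unique $(n-1)$-tree on $n$ vertices). So it suffices to show $K_{n,n}\succeq K_n$ as a minor. Fix a perfect matching $\{a_ib_i : i=1,\dots,n\}$ in $K_{n,n}$ between the two sides $A=\{a_1,\dots,a_n\}$ and $B=\{b_1,\dots,b_n\}$, and contract each matching edge $a_ib_i$ to a single vertex $c_i$. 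For every pair $i\ne j$, the edge $a_ib_j$ of $K_{n,n}$ (which exists because the bipartite graph is complete) becomes an edge between $c_i$ and $c_j$ in the contracted graph, so the contraction is $K_n$. Consequently
\[
\tw(K_{n,n})\;\ge\;\tw(K_n)\;=\;n-1.
\]

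To finish, suppose for contradiction that the class $\{K_{n,n} : n\ge 1\}$ is $(\tw,\omega)$-bounded, and let $f:\N\to\N$ be a witnessing function. Then for every $n\ge 1$ we have $\tw(K_{n,n})\le f(\omega(K_{n,n}))=f(2)$, whereas our minor argument gives $\tw(K_{n,n})\ge n-1$. Choosing any $n>f(2)+1$ yields a contradiction, so no such $f$ can exist.

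The only conceivable obstacle is confirming the two standard facts used: monotonicity of treewidth under minors, and $\tw(K_n)=n-1$. Both are classical and can simply be cited; if desired one could instead give a self-contained proof that $\tw(K_{n,n})\ge n-1$ by a direct separator argument (every tree decomposition has a bag meeting every edge of a given matching, forcing a bag of size $\ge n$), but invoking the minor-monotonicity is shorter and suffices for this short structural lemma.
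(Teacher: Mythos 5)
Your argument is correct. Note that the paper does not prove this lemma at all: it simply cites it from the literature (Lemma~2.7 of the earlier paper in the series), so there is no in-paper proof to compare against. Your self-contained justification is the standard one and is sound: $\omega(K_{n,n})=2$ for all $n\ge 1$, while contracting a perfect matching of $K_{n,n}$ yields $K_n$ as a minor, so minor-monotonicity of treewidth gives $\tw(K_{n,n})\ge \tw(K_n)=n-1$ (in fact $\tw(K_{n,n})=n$, but the weaker bound suffices). Since every graph in the class has clique number $2$ but the treewidth is unbounded, no function $f$ with $\tw(G')\le f(\omega(G'))$ can exist; the quantification over induced subgraphs in the definition only makes $(\tw,\omega)$-boundedness harder to satisfy, so exhibiting the violation on the graphs themselves is enough, as you correctly observe. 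The alternative separator-style argument you sketch at the end would also work, but the minor argument is the cleaner route.
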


\begin{sloppypar}
Dallard et al.~gave the following characterization of chordal graphs in terms of tree-independence number.
\begin{lemma}[Theorem 3.3 in \cite{DBLP:journals/jctb/DallardMS24}]\label{lem chordal tin}
    A graph $G$ is chordal if and only if $\tin(G)\leq 1$.
\end{lemma}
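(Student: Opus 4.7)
The plan is to prove both directions using the classical characterization that chordal graphs are exactly the graphs admitting a tree decomposition whose bags are cliques; the lemma then follows since a bag is a clique precisely when its independence number is at most $1$.

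For the forward implication, suppose $G$ is chordal. I would invoke (or reprove) the standard clique-tree theorem, which can be established by induction on $|V(G)|$: every chordal graph has a simplicial vertex $v$ (one whose neighborhood is a clique), so $G-v$ is chordal, and applying induction we obtain a tree decomposition $\mathcal{T}'$ of $G-v$ with all bags being cliques. We then create a new bag equal to $N_G[v]$, which is a clique, and attach it in $\mathcal{T}'$ to any bag containing $N_G(v)$ (which exists because $N_G(v)$ is a clique in $G-v$ and therefore lies in some bag of $\mathcal{T}'$). The resulting tree decomposition has all bags being cliques, hence $\alpha(\mathcal{T}) \le 1$, giving $\tin(G)\le 1$.

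For the backward implication, assume $\tin(G)\le 1$ and let $\mathcal{T}=(T,\{X_t\}_{t\in V(T)})$ be a tree decomposition witnessing this, so every bag $X_t$ is a clique in $G$. Suppose, for contradiction, that $G$ contains an induced cycle $C = v_1 v_2 \cdots v_k v_1$ with $k \ge 4$. Choose the non-adjacent vertices $v_1$ and $v_3$ of $C$, and recall the standard fact about tree decompositions that, for any two non-adjacent vertices $u,v$, some bag $X_t$ is a $u$-$v$ separator in $G$. Apply this to $v_1, v_3$ to obtain a bag $X_t$ meeting every $v_1$-$v_3$ path in $G$; in particular, $X_t$ meets each of the two internally disjoint paths of $C$ from $v_1$ to $v_3$, namely $\{v_2\}$ and $\{v_4,\ldots,v_k\}$. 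Hence $v_2 \in X_t$ and some $v_j \in X_t$ with $j \in \{4,\ldots,k\}$. Since $X_t$ is a clique, $v_2 v_j \in E(G)$, contradicting the fact that $C$ is an induced cycle (in which $v_2$ is adjacent only to $v_1$ and $v_3$).

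The only nonroutine ingredient is the clique-tree construction in the forward direction; depending on how formal the paper wishes to be, one can either cite it or include the short inductive argument above. The backward direction, by contrast, is a clean exercise in the defining properties of tree decompositions, and the main conceptual step is simply to exploit the two internally disjoint $v_1$-$v_3$ paths in the cycle to force the clique bag to contain two non-adjacent vertices of $C$.
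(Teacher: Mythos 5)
The paper does not prove this lemma; it imports it verbatim as Theorem~3.3 of the cited reference, so there is no in-paper proof to compare against. Your argument is the standard, self-contained proof and is essentially correct: the forward direction is the classical clique-tree construction via simplicial vertices, and the backward direction rules out induced cycles of length at least $4$. One point to tighten in the backward direction: the ``standard fact'' as you state it --- that for any two non-adjacent vertices some \emph{bag} is a $u$--$v$ separator --- is not literally true in general (non-adjacent vertices may share a bag, as in the one-bag decomposition of $P_3$; and even when they do not, the subtrees $T_{v_1}$ and $T_{v_3}$ may occupy adjacent nodes of $T$, in which case every bag on the path between them contains $v_1$ or $v_3$ and so is not a separator in the strict sense). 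What is true, and what you actually need, is this: since every bag is a clique and $v_1v_3\notin E(G)$, no bag contains both $v_1$ and $v_3$, so $T_{v_1}$ and $T_{v_3}$ are disjoint subtrees, and for the edge $tt'$ of $T$ on the path between them the adhesion set $S=X_t\cap X_{t'}$ separates $v_1$ from $v_3$ and contains neither. Running your argument with $S$ in place of the bag gives $v_2\in S$ and $v_j\in S$ for some $j\in\{4,\dots,k\}$, and since $S\subseteq X_t$ and $X_t$ is a clique, the contradiction with $C$ being induced follows exactly as you wrote. With that substitution the proof is complete.
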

\end{sloppypar}

Given two vertex-disjoint graphs $G$ and $H$, their \emph{disjoint union} is the graph $G+H$ with vertex set $V(G)\cup V(H)$ and edge set $E(G)\cup E(H)$.
Given a graph $G$, the \emph{complement} $\overline{G}$ is the graph with vertex set $V(G)$ in which two distinct vertices are adjacent if and only if they are nonadjacent in $G$.
Given a graph $G$, its \emph{line graph} $L(G)$ is a graph such that each vertex of $L(G)$ is an edge of $G$ and two vertices of $L(G)$ are adjacent if and only if their corresponding edges share a vertex in $G$.

In 1970, Beineke characterized the class of line graphs by a set of nine forbidden induced subgraphs $G_1,\ldots, G_9$ (see~\cite{zbMATH03321962}).
Since we will not need all of these nine graphs for our purpose, let us only note that considering only three of the forbidden induced subgraphs (namely, $G_1$, $G_3$, and $G_6$ from~\cite[Figure 3]{zbMATH03321962}), the characterization implies the following.

\begin{lemma}
Let $H$ be a graph and let $G$ be the line graph of $H$.
Then $G$ is $\{$claw, $K_5^-$, $\overline{C_4+2K_1}\}$-free.
\end{lemma}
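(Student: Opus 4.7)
The plan is to invoke Beineke's classical forbidden induced subgraph characterization of line graphs directly, since the statement is simply asserting that three specific graphs are members of his list of nine minimal non--line-graphs $G_1,\dots,G_9$. The excerpt has already pinpointed the targets: the claw should be $G_1$, the graph $K_5^-$ should be $G_3$, and $\overline{C_4+2K_1}$ should be $G_6$ in Figure~3 of Beineke's paper. Consequently, the whole proof reduces to checking three graph isomorphisms against Beineke's pictures.

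First, I would dispose of $G_1$: Beineke's first forbidden subgraph is universally drawn as $K_{1,3}$, i.e., the claw, and this requires no further argument beyond recalling the standard labelling.

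Next, for $G_3\cong K_5^-$, I would verify the isomorphism by matching numerical invariants to Beineke's figure: $K_5^-$ has five vertices, nine edges, two vertices of degree three (the endpoints of the missing edge), and three vertices of degree four forming a triangle. These data uniquely identify a graph of that order and size, and they match $G_3$.

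Finally, for $G_6\cong\overline{C_4+2K_1}$, I would first describe $\overline{C_4+2K_1}$ intrinsically. Let $V=\{a_1,a_2,a_3,a_4,b_1,b_2\}$, where $a_1a_2a_3a_4$ is the $C_4$ and $b_1,b_2$ are the two isolated vertices in $C_4+2K_1$. Taking complements, the edges of $\overline{C_4+2K_1}$ are the two diagonals $a_1a_3$ and $a_2a_4$, the edge $b_1b_2$, and all eight edges between $\{a_1,a_2,a_3,a_4\}$ and $\{b_1,b_2\}$. One then reads Beineke's $G_6$ and observes the same structure. The only expected obstacle is a notational one, namely locating the right labelling inside Beineke's figure; there is no substantive mathematical difficulty, as the lemma is an immediate specialization of Beineke's theorem.
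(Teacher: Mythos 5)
Your proposal is correct and matches the paper's treatment exactly: the paper gives no separate argument for this lemma, deriving it immediately from Beineke's forbidden-induced-subgraph characterization of line graphs by identifying the claw, $K_5^-$, and $\overline{C_4+2K_1}$ as $G_1$, $G_3$, and $G_6$ of that list. Your additional isomorphism checks (e.g., that a $5$-vertex, $9$-edge graph is necessarily $K_5^-$) are sound but only make explicit what the paper leaves to the reader.
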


\begin{corollary}\label{cor:line-complement}
Let $H$ be a graph and let $G$ be the complement of the line graph of $H$.
Then $G$ is $\{K_3+K_1$, $K_2+3K_1$, $C_4+2K_1\}$-free.
\end{corollary}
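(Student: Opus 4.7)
The plan is to derive \Cref{cor:line-complement} directly from the preceding lemma by taking complements. The key observation is the standard fact that for any two graphs $G$ and $H$, the graph $G$ contains $H$ as an induced subgraph if and only if $\overline{G}$ contains $\overline{H}$ as an induced subgraph; equivalently, $G$ is $H$-free if and only if $\overline{G}$ is $\overline{H}$-free.

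Applying this to $G = \overline{L(H)}$, it suffices to check that the three forbidden induced subgraphs listed in \Cref{cor:line-complement} are precisely the complements of the three forbidden induced subgraphs from the preceding lemma. Concretely, I would verify the following three identifications: the complement of the claw $K_{1,3}$ is $K_3 + K_1$ (the center of the star becomes isolated, and the three leaves form a triangle); the complement of $K_5^-$ has $\binom{5}{2} - 9 = 1$ edge on $5$ vertices and is therefore isomorphic to $K_2 + 3K_1$; and the complement of $\overline{C_4+2K_1}$ is, by involutivity of the complement operation, simply $C_4 + 2K_1$.

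Combining these computations with the previous lemma applied to $L(H)$ yields the corollary. There is no real obstacle here; the entire argument is a one-line deduction once the three complements have been identified, and the only routine verification is the description of the complement of $K_5^-$ via edge counting. I would present the proof as a single short paragraph of the form ``By the previous lemma, $L(H)$ is $\{K_{1,3}, K_5^-, \overline{C_4+2K_1}\}$-free, hence $\overline{L(H)}$ is $\{\overline{K_{1,3}}, \overline{K_5^-}, C_4+2K_1\}$-free; since $\overline{K_{1,3}} \cong K_3+K_1$ and $\overline{K_5^-} \cong K_2+3K_1$, the claim follows.''
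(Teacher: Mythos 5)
Your proposal is correct and is exactly the intended deduction: the paper states \Cref{cor:line-complement} as an immediate consequence of the preceding lemma via complementation, and your three identifications ($\overline{K_{1,3}}\cong K_3+K_1$, $\overline{K_5^-}\cong K_2+3K_1$, and the involutivity giving $C_4+2K_1$) are all accurate. Nothing further is needed.
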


\section{Excluding a clique partition graph}\label{sec:coline}

Chudnovsky and Seymour define in~\cite{zbMATH06334633} a graph $G$ to be \emph{$k$-split} if $V(G)$ can be partitioned into sets $A$ and $B$ such that $\omega(G[A])\le k$ and $\alpha(G[B])\le k$.
The \emph{splitness} of a graph $G$ is the minimum $k$ such that $G$ is $k$-split.
The following result completely characterizes minimal hereditary graph classes with unbounded splitness.
A \emph{clique partition graph} is a disjoint union of complete graphs, or, equivalently, any graph whose complement is complete multipartite.

\begin{theorem}[Chudnovsky and Seymour \cite{zbMATH06334633}]\label{chudnovsky}
For every clique partition graph $H_1$ and complete multipartite graph $H_2$, there exists an integer $k$ such that every $\{ H_1,H_2\}$-free graph is $k$-split.
\end{theorem}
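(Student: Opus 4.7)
The plan is to proceed by induction on $|V(H_1)|+|V(H_2)|$. First, if $H_1$ has $p$ components of maximum size $a$, then $H_1$ is an induced subgraph of $pK_a$, and similarly every complete multipartite $H_2$ with $q$ parts of maximum size $b$ is an induced subgraph of the complete $q$-partite graph with all parts of size $b$, which we denote here by $K_{q\times b}$. Since $\{H_1,H_2\}$-freeness then implies $\{pK_a,K_{q\times b}\}$-freeness, the first step is to reduce to the case $H_1=pK_a$ and $H_2=K_{q\times b}$.

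The base cases $p=1$ and $q=1$ are immediate: if $p=1$ then every $K_a$-free graph $G$ has $\omega(G)\le a-1$ and is $(a-1)$-split with $A=V(G)$, $B=\emptyset$; the case $q=1$ is symmetric. For the inductive step with $p,q\ge 2$, let $k_1,k_2$ be the splitness bounds obtained by induction for the pairs $((p-1)K_a, K_{q\times b})$ and $(pK_a, K_{(q-1)\times b})$ respectively. If the $\{pK_a, K_{q\times b}\}$-free graph $G$ contains no induced $(p-1)K_a$ or no induced $K_{(q-1)\times b}$, the inductive hypothesis applies directly; otherwise fix induced subgraphs $H_1'\cong(p-1)K_a$ and $H_2'\cong K_{(q-1)\times b}$ in $G$.

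The key structural observations are that the set $Y_1$ of vertices of $V(G)\setminus V(H_1')$ anticomplete to $V(H_1')$ induces a $K_a$-free subgraph (any $K_a$ there together with $H_1'$ would form $pK_a$), so $\omega(G[Y_1])\le a-1$; symmetrically the set $Y_2$ of vertices of $V(G)\setminus V(H_2')$ complete to $V(H_2')$ satisfies $\alpha(G[Y_2])\le b-1$. The remaining vertices split into at most $2^{(p-1)a+(q-1)b}$ type classes according to their adjacency pattern on $V(H_1')\cup V(H_2')$. For a class whose vertices meet exactly $r_1$ components of $H_1'$ and miss exactly $r_2$ parts of $H_2'$, an extension argument combining this class with the $p-1-r_1$ unhit components of $H_1'$ and the $q-1-r_2$ fully-hit parts of $H_2'$ shows that the induced subgraph on the class is $\{(1+r_1)K_a,\,K_{(1+r_2)\times b}\}$-free, and the inductive hypothesis applies whenever $r_1<p-1$ or $r_2<q-1$.

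The hard part will be the degenerate class where $r_1=p-1$ and $r_2=q-1$, in which the extension arguments fail to reduce either parameter. Handling this case is the technical heart of the proof and requires a finer analysis, for instance via a further Ramsey-type extraction of substructures inside the degenerate class combined with an auxiliary induction on $a+b$ (the base $a=1$ or $b=1$ collapses $H_1$ or $H_2$ to a clique or an independent set, respectively). Once splitness is bounded on each type class by some constant $k_T$, the global decomposition of $G$ is assembled by placing $Y_1$ together with the clique-bounded halves of the class decompositions into $A$, and the independence-bounded halves together with $Y_2$ into $B$, distributing the bounded set $V(H_1')\cup V(H_2')$ arbitrarily; the resulting splitness of $G$ is bounded by a constant depending only on $p$, $a$, $q$, and $b$.
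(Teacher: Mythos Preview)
The paper does not prove this theorem; it is quoted from Chudnovsky and Seymour and used as a black box in the proof of \Cref{thm:cpg}. There is therefore no proof in the paper to compare your proposal against.

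Evaluated on its own, your outline sets up a reasonable inductive framework: the reduction to $H_1=pK_a$ and $H_2=K_{q\times b}$ is correct, the base cases $p=1$ and $q=1$ are fine, and the extension arguments showing that a type class with parameters $(r_1,r_2)$ is $\{(r_1+1)K_a,\,K_{(r_2+1)\times b}\}$-free are valid and do permit induction whenever $r_1<p-1$ or $r_2<q-1$. The genuine gap is exactly where you say it is, and it is not minor. In the degenerate class with $r_1=p-1$ and $r_2=q-1$, your extension argument returns only the original hypothesis $\{pK_a,K_{q\times b}\}$-freeness, so the main induction makes no progress. Your proposed remedy, an ``auxiliary induction on $a+b$'', is not supported by any mechanism: you do not say what structure in the degenerate class would let you pass from $(a,b)$ to $(a-1,b)$ or $(a,b-1)$, and none is apparent from the setup (fixing the adjacency pattern on $V(H_1')\cup V(H_2')$ gives homogeneity toward those sets but says nothing that shrinks the clique or part sizes inside the class). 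The phrase ``a further Ramsey-type extraction'' is a placeholder, not an argument. As written, the proposal is a plausible opening that correctly isolates the difficulty but does not resolve it; completing it would require the substantive idea from the Chudnovsky--Seymour paper, which is not reproduced here.
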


\Cref{chudnovsky} implies the validity of \Cref{conj:tree-alpha} for graph classes excluding some clique partition graph.

\begin{theorem}\label{thm:cpg}
Let $\mathcal{G}$ be a hereditary graph class that does not contain all clique partition graphs.
Then, $\mathcal{G}$ is $(\tw,\omega)$-bounded if and only if $\mathcal{G}$ has bounded tree-independence number.
\end{theorem}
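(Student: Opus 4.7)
\noindent\textbf{Proof plan for \Cref{thm:cpg}.}
The forward implication is immediate from \Cref{lem:bdd-tree-alpha-tw-omega-bdd}, so the plan concentrates on the converse.
Suppose $\mathcal{G}$ is hereditary and $(\tw,\omega)$-bounded, and fix a clique partition graph $H_1 \notin \mathcal{G}$, which exists by hypothesis.
Next I would use \Cref{lem:cbp}: since the class of balanced complete bipartite graphs is not $(\tw,\omega)$-bounded but $\mathcal{G}$ is, there must be some integer $t$ with $K_{t,t}\notin\mathcal{G}$, and since $\mathcal{G}$ is hereditary we in fact have that every $G\in\mathcal{G}$ is $\{H_1,K_{t,t}\}$-free. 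The graph $K_{t,t}$ is complete multipartite, so \Cref{chudnovsky} applies and yields a constant $k=k(H_1,t)$ such that every $G\in\mathcal{G}$ is $k$-split.

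The core of the argument is then to turn $k$-splitness into a bound on $\tin$.
Fix $G\in\mathcal{G}$ and a partition $V(G)=A\cup B$ with $\omega(G[A])\le k$ and $\alpha(G[B])\le k$.
Because $G[A]$ is an induced subgraph of a graph in $\mathcal{G}$ and $(\tw,\omega)$-boundedness is inherited by induced subgraphs, we get $\tw(G[A])\le f(k)$ for the bounding function $f$ of $\mathcal{G}$; let $\mathcal{T}_A=(T,\{X_t\}_{t\in V(T)})$ be a tree decomposition of $G[A]$ of width at most $f(k)$.
I would then define a tree decomposition $\mathcal{T}$ of $G$ with the same tree $T$ and bags $Y_t \eqdef X_t \cup B$.
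Every vertex of $A$ still lies in the bags prescribed by $\mathcal{T}_A$, every vertex of $B$ lies in all bags, edges inside $A$ and inside $B$ are clearly covered, and for an edge $uv$ with $u\in A$, $v\in B$ we just pick any bag $X_t$ containing $u$. The coherence condition on each vertex is preserved (for $u\in A$, the subtree is inherited from $\mathcal{T}_A$; for $v\in B$, it is all of $T$).

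It remains to bound $\alpha(\mathcal{T})$.
Any independent set $I$ in $G[Y_t]$ splits as $I=(I\cap X_t)\cup(I\cap B)$, so
\[
\alpha(G[Y_t]) \le \alpha(G[X_t])+\alpha(G[B]) \le |X_t|+k \le f(k)+1+k,
\]
giving $\tin(G)\le f(k)+k+1$, a constant depending only on $\mathcal{G}$.
This establishes the reverse implication and hence the theorem.
The only nontrivial ingredient is the Chudnovsky--Seymour splitness result \Cref{chudnovsky}; once that is available, the main step is simply the observation that in a $k$-split decomposition the ``small clique'' side is controlled by $(\tw,\omega)$-boundedness while the ``small independence'' side can be stuffed into every bag without increasing the independence number beyond a constant.
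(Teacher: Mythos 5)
Your proposal is correct and follows essentially the same route as the paper: invoke \Cref{lem:cbp} to exclude some $K_{t,t}$, apply the Chudnovsky--Seymour splitness theorem (\Cref{chudnovsky}) to the pair $\{H_1,K_{t,t}\}$, and then add the small-independence side $B$ of the $k$-split partition to every bag of a bounded-width tree decomposition of $G[A]$. The only difference is that you carry out the verification of the tree-decomposition axioms and the explicit bound $\tin(G)\le f(k)+k+1$ in more detail than the paper does.
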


\begin{proof}
Since, by \cref{lem:bdd-tree-alpha-tw-omega-bdd}, any graph class with bounded tree-independence number is $(\tw, \omega)$-bounded, it suffices to prove the forward implication.

Suppose that $\mathcal{G}$ is a $(\tw,\omega)$-bounded hereditary  graph class that does not contain all clique partition graphs.
By \Cref{lem:cbp}, $\mathcal{G}$ does not contain all complete bipartite graphs.
Let $H_1$ be a clique partition graph and let $H_2$ be a complete bipartite graph such that $H_1$ and $H_2$ do not belong to $\mathcal{G}$.
Then, every graph in $\mathcal{G}$ is $\{H_1,H_2\}$-free.
Hence, by \Cref{chudnovsky}, there exists an integer $k$ such that every graph in $\mathcal{G}$ is $k$-split.
Let $G$ be a graph from $\mathcal{G}$.
Partition $V(G)$ into two sets $A$ and $B$ such that $A$ has clique number at most $k$ and $B$ has independence number at most $k$.
Since $\mathcal{G}$ is hereditary and $(\tw,\omega)$-bounded, $G[A]$ has bounded treewidth.
Let $\mathcal{T}_A$ be a tree decomposition of the graph $G[A]$ minimizing the width.
Then, adding the vertices in $B$ to every bag of $\mathcal{T}_A$ results in a tree decomposition $\mathcal{T}$ of $G$ with bounded independence number.
This shows that $\mathcal{G}$ has bounded tree-independence number, as claimed.
\end{proof}

\subsection*{Complements of line graphs}

\Cref{thm:cpg} has the following consequence for  complements of line graphs.

\begin{corollary}\label{cor:coline}
For every graph class $\mathcal{G}$, the class $\overline{L(\mathcal{G})}$ is $(\tw,\omega)$-bounded if and only if it has bounded tree-independence number.
\end{corollary}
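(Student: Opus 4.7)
The plan is to leverage Theorem \ref{thm:cpg} combined with the structural observation of Corollary \ref{cor:line-complement}. The reverse implication (bounded tree-independence number implies $(\tw,\omega)$-boundedness) is immediate from Lemma \ref{lem:bdd-tree-alpha-tw-omega-bdd}, so the entire content of the corollary lies in the forward direction.

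For the forward direction, assume that $\overline{L(\mathcal{G})}$ is $(\tw,\omega)$-bounded. The first step is to pass to a hereditary class: let $\mathcal{H}$ be the hereditary closure of $\overline{L(\mathcal{G})}$. Since $(\tw,\omega)$-boundedness is defined in terms of induced subgraphs, the class $\mathcal{H}$ is again $(\tw,\omega)$-bounded. Moreover, because the class of all line graphs is closed under induced subgraphs and because taking complements commutes with taking induced subgraphs, every graph in $\mathcal{H}$ is the complement of the line graph of some graph.

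The second step is to identify an excluded clique partition graph in $\mathcal{H}$. By Corollary \ref{cor:line-complement}, every graph in $\mathcal{H}$ is $(K_3+K_1)$-free, and $K_3+K_1$ is itself a clique partition graph. Hence $\mathcal{H}$ is a hereditary graph class that does not contain all clique partition graphs, and Theorem \ref{thm:cpg} applies and yields that $\mathcal{H}$ has bounded tree-independence number. Since $\overline{L(\mathcal{G})}\subseteq \mathcal{H}$, the same bound holds for $\overline{L(\mathcal{G})}$, which completes the proof.

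There is no real obstacle here; the argument is essentially a two-line reduction. The only minor care needed is in passing to the hereditary closure so that Theorem \ref{thm:cpg} can be invoked, and in noting that Corollary \ref{cor:line-complement} already hands us a forbidden clique partition graph for free.
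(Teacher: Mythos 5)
Your proposal is correct and follows essentially the same route as the paper's proof: reduce the forward direction to \Cref{thm:cpg} by passing to the hereditary closure of $\overline{L(\mathcal{G})}$ and using \Cref{cor:line-complement} to exhibit the excluded clique partition graph $K_3+K_1$. The extra observation that the hereditary closure still consists of complements of line graphs is true but not needed, since \Cref{thm:cpg} only requires the class to be hereditary, $(\tw,\omega)$-bounded, and to exclude some clique partition graph.
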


\begin{proof}
By \cref{lem:bdd-tree-alpha-tw-omega-bdd}, it suffices to prove the forward implication.
Assume that $\overline{L(\mathcal{G})}$ is $(\tw,\omega)$-bounded.
Let $\mathcal{H}$ be the hereditary closure of $\overline{L(\mathcal{G})}$, that is, the class of all induced subgraphs of graphs in $\overline{L(\mathcal{G})}$.
By \Cref{cor:line-complement}, every graph in $\overline{L(\mathcal{G})}$ is $(K_3+K_1)$-free, and, consequently, so is every graph in $\mathcal{H}$.
Since $K_3+K_1$ is a clique partition graph, \Cref{thm:cpg} applies, showing that $\mathcal{H}$ has bounded tree-independence number.
Hence, so does its subclass $\overline{L(\mathcal{G})}$.
\end{proof}

We now refine the result of \Cref{cor:coline}, by showing that within subclasses of the class of complements of line graphs, $(\tw, \omega)$-boundedness can be characterized in several  equivalent ways.
This includes the exclusion of some balanced complete bipartite graph, the existence of a vertex cover with small independence number, and an excluded subgraph condition on the root graph (which is easily seen to be equivalent to the condition that the root graph contains at most one vertex of large degree).
For convenience, we restate the characterizations.

\thmColine*

\begin{proof}
First, \Cref{cor:coline} establishes the equivalence between Conditions~\ref{newproperty1} and~\ref{newproperty5}.

Second, we prove that Condition~\ref{newproperty1} implies Condition~\ref{newproperty2} by contraposition.
Assume that for every positive integer $s$ there is a graph $G\in \overline{L(\mathcal{G})}$ that contains an induced subgraph isomorphic to $K_{s,s}$.
Then, by \cref{lem:cbp}, $\overline{L(\mathcal{G})}$ is not $(\tw, \omega)$-bounded.

Next, we prove that Condition~\ref{newproperty2} implies Condition~\ref{newproperty3}.
Assume that there exists a positive integer $s$ such that every graph in $\overline{L(\mathcal{G})}$ is $K_{s,s}$-free.
We claim that every graph in $\mathcal{G}$ is $2K_{1,s}$-subgraph-free.
Suppose for a contradiction that there is a graph $G$ in $\mathcal{G}$ that has a subgraph $H$ isomorphic to $2K_{1,s}$.
Then $\overline{L(H)}\cong \overline{L(2K_{1,s})}\cong \overline{2K_s}\cong K_{s,s}$ since $L(K_{1,s}) \cong K_s$.
Since $H$ is a subgraph of $G$, it follows that $L(H)$ is an induced subgraph of $L(G)$, hence, $\overline{L(H)}\cong K_{s,s}$ is an induced subgraph of $\overline{L(G)}$, a contradiction.

Next, we prove that Condition~\ref{newproperty3} implies Condition~\ref{newproperty4}.
Suppose that $s\ge 4$ is an integer such that every graph in $\mathcal{G}$ is $2K_{1,s}$-subgraph-free.
We first show that every graph in $\mathcal{G}$ has at most one vertex of degree greater than $2s$.
Let $G$ be a graph in $\mathcal{G}$ and suppose that there is a vertex $v$ of $G$ such that $d(v)\ge 2s+1$.
We claim that every vertex $w\neq v$ has degree at most~$s$.
Suppose for a contradiction that there exists a vertex $w\neq v$ with degree at least $s+1$.
Fix $s$~vertices $w_1,\ldots, w_s$ from the set $N(w)\setminus\{v\}$ and $s$ vertices $v_1,\ldots, v_s$ from the set $N(v)\setminus\{w,w_1,\ldots, w_s\}$.
Then the subgraph of $G$ formed by the vertices $\{v,v_1,\ldots, v_s,w,w_1,\ldots, w_s\}$ and edges $\{ww_i\colon 1\le i\le s\}\cup \{vv_i\colon 1\le i\le s\}$ is isomorphic to $2K_{1,s}$, a contradiction.
This shows the claim that every graph in $\mathcal{G}$ has at most one vertex of degree greater than $2s$.

Now we prove that every graph in $\overline{L(\mathcal{G})}$ has a vertex cover inducing a subgraph with independence number at most $2s$.
Let $G$ be a graph in $\mathcal{G}$ and let $G' = \overline{L(G)}$.
By the above observation, we may assume that $G$ has a vertex, say $v$, such that the graph $G-v$ has maximum degree at most $2s$, and let $S=E(G-v)$.
Note that the set of edges of $G$ incident to $v$ forms a clique in $L(G)$ and hence an independent set in $G'$.
Consequently, the set $S$ forms a vertex cover in $G'$.
Furthermore, since a maximum independent set in $G'$ corresponds to a maximum clique in $L(G)$, it follows that the independence number of the subgraph of $G'$ induced by $S$ equals the clique number of the subgraph of $L(G)$ induced by $S$.
By the definition of $S$, the subgraph of $L(G)$ induced by $S$ is exactly the line graph of $G-v$.
Since $G-v$ has maximum degree at most $2s$, the clique number of $L(G-v)$ is at most $2s$.
We infer that the independence number of the subgraph of $G'$ induced by $S$ is at most $2s$.


Finally, we prove that Condition~\ref{newproperty4} implies Condition~\ref{newproperty5}.
Suppose that there exists an integer $s$ such that every graph in $\overline{L(\mathcal{G})}$ has a vertex cover inducing a subgraph with independence number at most $s$.
We claim that the tree-independence number of any graph in $\overline{L(\mathcal{G})}$ is at most $s+1$.
Let $G$ be a graph in $\mathcal{G}$ and let $S$ be a vertex cover in $G$ such that $\alpha(G[S])\le s$.
We have to show that there exists a tree decomposition $\mathcal{T}$ of $G$ with independence number at most $s+1$.
Recall that by definition of $S$, $G-S$ is an edgeless graph.
We form $\mathcal{T} = (T,\{ X_t \}_{t\in V(T)})$ in the following way: let $T$ be any tree with vertex set $V(G)\setminus S$ and, for every vertex $t\in V(T)$, define the bag $X_t=S\cup \{t\}$.
By construction, the union of all bags of $\mathcal{T}$ equals the vertex set of $G$.
Moreover, since $S$ is a vertex cover in $G$, for every edge $uv\in E(G)$ there exists a node $t\in V(T)$ such that $X_t$ contains both $u$ and $v$.
Furthermore, for every vertex $u\in V(G)$, the subgraph $T_u$ of $T$ induced by the set $\{ t\in V(T) : u\in X_t \}$ is either $T$ (if $u\in S$) or a one-vertex subtree of $T$ (otherwise); hence, $T_u$ is connected.
Thus, $\mathcal{T}$ is indeed a tree decomposition of $G$.
Since the independence number of the subgraph of $G$ induced by $S$ is at most $s$, it follows that the independence number of the subgraph of $G$ induced by $X_t$ is at most $s+1$, for any $t\in V(T)$.
Hence, the tree-independence number of $G$ is at most $s+1$ and therefore the class $\overline{L(\mathcal{G})}$ has bounded tree-independence number, since $G$ was an arbitrary graph from $\overline{L(\mathcal{G})}$.
\end{proof}

Let us also prove the result mentioned in the introduction, that no complement of a line graph contains two independent cycles.

\begin{proposition}\label{prop:co-line}
For every graph $G$, the graph $\overline{L(G)}$ is $\mathcal{O}_2$-free.
\end{proposition}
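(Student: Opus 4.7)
The plan is to argue by contradiction. Suppose $\overline{L(G)}$ contains two vertex-disjoint cycles $C = v_1 v_2 \cdots v_k v_1$ and $C' = u_1 u_2 \cdots u_m u_1$ with no edges between them in $\overline{L(G)}$. Under the correspondence between vertices of $\overline{L(G)}$ and edges of $G$, I write $e_i$ for the edge of $G$ associated to $v_i$ and $f_j$ for the edge associated to $u_j$. The assumption then translates in $G$ as follows: consecutive edges along $C$ (respectively, along $C'$) are vertex-disjoint in $G$, while the absence of edges between $C$ and $C'$ in $\overline{L(G)}$ means that every pair $(e_i, f_j)$ shares an endpoint in $G$.

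The key step is to exploit just two consecutive edges of $C'$. Write $f_1 = ab$ and $f_2 = cd$; since $f_1$ and $f_2$ are vertex-disjoint in $G$, the set $\{a,b,c,d\}$ has four distinct elements. Each $e_i$ has only two endpoints and must meet both $\{a,b\}$ and $\{c,d\}$, so $e_i$ has exactly one endpoint in each, and therefore $e_i \in \{ac, ad, bc, bd\}$. This pigeonholes every vertex of $C$ into one of only four possible edges of $G$.

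To finish, I observe that inside $\{ac, ad, bc, bd\}$ each edge has a unique vertex-disjoint partner, namely its ``opposite'' ($ac \leftrightarrow bd$ and $ad \leftrightarrow bc$). Taking three consecutive vertices $v_1, v_2, v_3$ of $C$, which exist since $k \ge 3$, consecutive vertex-disjointness forces $e_2$ to be the opposite of $e_1$ and $e_3$ to be the opposite of $e_2$, so $e_3 = e_1$; this contradicts the fact that $v_1$ and $v_3$ are distinct vertices of $\overline{L(G)}$. I do not foresee a serious obstacle: the argument runs directly from the definitions and does not even require the forbidden induced subgraphs of \Cref{cor:line-complement}. The substantive content is the single observation that two vertex-disjoint edges of $C'$ already confine all edges corresponding to $C$ to a four-element set, after which a one-line disjointness count closes out the proof.
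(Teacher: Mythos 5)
Your proof is correct, and it takes a genuinely different route from the paper. The paper's argument goes through \Cref{cor:line-complement}, i.e., through Beineke's forbidden induced subgraph characterization of line graphs: it first rules out triangles among the two independent cycles (via $K_3+K_1$), then shows both cycles must have length exactly $4$ (via $K_2+3K_1$), and finally exhibits an induced $C_4+2K_1$, contradicting the corollary. You instead work entirely inside the root graph $G$: two consecutive (hence vertex-disjoint) edges $f_1=ab$, $f_2=cd$ of one cycle force every edge corresponding to the other cycle to lie in the four-element set $\{ac,ad,bc,bd\}$, within which each edge has a unique vertex-disjoint partner, so three consecutive vertices of that cycle would have to repeat an edge --- a contradiction. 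Your argument is shorter, needs no structural input beyond the definition of the line graph and its complement, and does not even require the cycles to be chordless (immaterial here, since having two independent cycles is equivalent to having two independent induced cycles, but it makes the proof cleaner). What the paper's approach buys is consistency with the rest of \Cref{sec:coline}, where \Cref{cor:line-complement} is already set up and reused; your approach buys self-containedness and a transparent, purely combinatorial reason why complements of line graphs are $\mathcal{O}_2$-free.
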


\begin{proof}
Let $G$ be a graph and suppose for a contradiction that $\overline{L(G)}$ is not $\mathcal{O}_2$-free.
Then, $\overline{L(G)}$ has two induced independent cycles.
Let $C$ and $D$ be two such cycles in $\overline{L(G)}$, and let $k$ and $\ell$ be their respective lengths.

We first show that $k\ge 4$.
Suppose for a contradiction that $k = 3$.
Let $V(C)= \{ v_1,v_2,v_3 \}$ and let $u$ be an arbitrary vertex of $D$.
Due to the independence of $C$ and $D$, the vertex $u$ is not adjacent to any of $v_1$, $v_2$ and $v_3$ in $\overline{L(G)}$.
Hence, the subgraph of $\overline{L(G)}$ induced by $\{v_1, v_2,v_3,u\}$ is isomorphic to $K_3+K_1$, a contradiction with \Cref{cor:line-complement}.
Therefore $k\ge 4$ and, by symmetry, $\ell\ge 4$.

Since $D$ has length at least $4$, there exist a pair of nonadjacent vertices in $D$, say $u_1$ and $u_2$.

Next, we show that $k = 4$.
Suppose for a contradiction that $k\ge 5$.
Then, there is a vertex $v_1\in V(C)$ and an edge $v_2v_3\in E(C)$ such that $v_1$ is at distance at least two from both $v_2$ and $v_3$.
Now it can be observed that, since $C$ and $D$ are independent cycles, the subgraph of $\overline{L(G)}$ induced by $\{ v_1, v_2, v_3, u_1, u_2 \}$ is isomorphic to $K_2 + 3K_1$, contradicting \cref{cor:line-complement}.

Since $C\cong C_4$, and $C$ and $D$ are independent cycles, the subgraph of $\overline{L(G)}$ induced by $V(C)\cup \{u_1,u_2\}$ is isomorphic to $C_4+2K_1$, which is again in contradiction with \cref{cor:line-complement}.
This completes the proof.
\end{proof}

Note that \Cref{prop:co-line} implies that any problem that is \NP-hard in the class of complements of line graphs is also \NP-hard in the class of $\mathcal{O}_2$-free graphs.
This is the case, for instance, for the \textsc{$k$-Clique Cover} problem for all $k\ge 3$.
This problem asks, given a graph $G$, whether $G$ contain $k$ cliques with union $V(G)$.
Indeed, for every $G$ and $k\geq 3$, determining whether $\overline{L(G)}$ can be covered with $k$ cliques is equivalent to determining whether $G$ is $k$-edge-colorable, which is known to be \NP-complete (see~\cite{H81}).

\section{Excluding a star and many independent cycles}\label{sec:star}

In this section, we study $\mathcal{O}_k$-free graphs excluding an induced star, giving a simpler proof of \cref{thm:K1t-free-Ok-free-tree-alpha-intro}.
Note that $K_{1,1}$-free graphs are precisely the edgeless graphs, so when studying $K_{1,t}$-free graphs, we focus on the case $t\geq 2$.
We first recall two theorems from Bonamy et al.~\cite[Theorems 5.5 and 5.3]{MR4723425}.
The bounds on $\delta_k$ and $\varepsilon_k$ that we state here are not explicit in their theorem statements, but implicit in the proofs.

\begin{theorem}[Bonamy et al.~\cite{MR4723425}]\label{thm:delta-k}
For any integer $k\ge 2$, there is some $\delta_k = \Omega\left(\frac{1}{k^6\log^3k}\right)$ such that if $G$ is a connected $\mathcal{O}_k$-free graph with girth at least $11$, and furthermore $G$ admits a shortest cycle $C$ such that $G-N[V(C)]$ is a forest, then $G$ has a vertex of degree at least $\delta_k \cdot r(G)$.
\end{theorem}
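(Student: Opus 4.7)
The plan is to show the contrapositive form $r(G) \le \delta_k^{-1} \cdot \Delta(G)$, where $\Delta = \Delta(G)$ is the maximum degree. Set $S = N_G[V(C)]$ and $T = V(G) \setminus S$, so by hypothesis $G[T]$ is a forest and $S$ is a feedback vertex set of $G$. Assume without loss of generality that $G$ is connected; then from $r(G) = |E(G)| - |V(G)| + 1$, the decomposition $E(G) = E(G[T]) \sqcup E_G(S,T) \sqcup E(G[S])$, and the forest bound $|E(G[T])| \le |T|-1$, one obtains the bookkeeping inequality
\[
r(G) \;\le\; \sum_{v \in S}(d_G(v)-1) \;-\; |E(G[S])| \;+\; 1.
\]

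The naive bound $r(G) \le (\Delta-1)|S|$ is too weak, because $|S|$ can be much larger than $\delta_k^{-1}\Delta$: already when $G$ is a long induced cycle, $|S|=|V(C)|$ is unbounded while $r(G)=1$. The refinement is to use that $C$ is a \emph{shortest} cycle of length $\ell \ge 11$. First I would show that every $u \in S \setminus V(C)$ has exactly one neighbor on $V(C)$, since two neighbors $v_i, v_j$ on $C$ would produce a cycle of length at most $\ell/2 + 2 < \ell$, contradicting minimality of $C$. Hence $|S \setminus V(C)| = \sum_{v \in V(C)}(d_G(v)-2)$ and $E(G[S])$ contains $C$ together with the $|S \setminus V(C)|$ pendant-like edges, yielding $|E(G[S])| \ge \ell + |S \setminus V(C)|$. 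Plugging in and cancelling, the degree-$2$ contributions from vertices of $V(C)$ disappear, and one is left with a bound of the form
\[
r(G) \;\le\; (\Delta-1)\cdot\bigl(b + m\bigr),
\]
where $b = |\{v \in V(C) : d_G(v) > 2\}|$ is the number of \emph{branch vertices} on $C$ and $m$ measures the edges sprouting from the pendant vertices in $S \setminus V(C)$ into $T$.

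The heart of the argument, and the main obstacle, is to bound $b$ (and analogously $m$) by $O(\delta_k^{-1}) = O(k^6 \log^3 k)$. Here I would invoke the $\mathcal{O}_k$-freeness of $G$ together with the girth-$11$ assumption. The picture is: each branch vertex $v \in V(C) \cap B$ is the root of a subtree $T_v$ in $G-V(C)$ (using the forest structure of $G[T]$), and pairing $v$ with a second branch vertex $v'$ far along $C$ produces a cycle $D_{v,v'}$ by going through $v$, into $T_v$ or back along $C$, and returning via $v'$. The girth bound ensures these subtrees are locally tree-like and that cycles built from widely separated branch vertices are genuinely distinct. The task is then to select, out of a large collection of branch vertices, a subfamily of $k$ pairwise \emph{independent} cycles—requiring both vertex-disjointness and the absence of connecting edges. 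I would do this by partitioning $C$ into $k$ equal arcs, using the girth-$11$ assumption to rule out short connecting edges between arcs chosen at large circular distance, and then iterating/pigeonholing inside each arc to extract an induced cycle. Quantifying the loss at each step is where the polynomial-in-$k$ and polylog-in-$k$ factors enter, producing the claimed $\delta_k = \Omega(1/(k^6\log^3 k))$.

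The main technical obstacle will therefore be the extraction of $k$ pairwise independent cycles from many branch vertices with the quantitative dependence claimed; once $b,m \le \delta_k^{-1}$ is established, the earlier bookkeeping inequality immediately gives $r(G) \le \delta_k^{-1}\Delta$, as desired.
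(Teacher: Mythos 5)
This theorem is quoted by the paper from Bonamy et al.\ without proof, so your attempt can only be judged on its own merits; as written it is a plan rather than a proof, and it contains one concretely false intermediate claim. The bookkeeping reduction to $r(G)\le \sum_{v\in S}(d_G(v)-1)-|E(G[S])|+1$ and the observation that each vertex of $N[V(C)]\setminus V(C)$ has a unique neighbour on the shortest cycle $C$ are both fine. But the step you call the heart of the argument --- bounding $b=|\{v\in V(C): d_G(v)>2\}|$ by $O(\delta_k^{-1})=O(k^6\log^3k)$ --- cannot be carried out: take $C$ to be a single chordless cycle of length $\ell\ge 11$ and attach a pendant path of length $2$ at every vertex of $C$. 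This graph is connected, $\mathcal{O}_2$-free, has girth $\ell$, satisfies the hypothesis that $G-N[V(C)]$ is a forest (it is edgeless), and has $r(G)=1$ and $\Delta(G)=3$, yet $b=\ell$ is unbounded. So no bound on the number of branch vertices in terms of $k$ alone exists; what must be controlled is the number of attachments that actually close cycles, and your accounting, which charges every branch vertex equally, cannot distinguish pendant trees from cycle-closing structure. (Relatedly, the estimate $|E(G[T])|\le |T|-1$ is very lossy when the forest $G[T]$ has many components, which is exactly what happens in this example, and it is the source of the slack.)

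The second, larger gap is that the actual content of the theorem --- showing that if $r(G)$ is much larger than $\Delta(G)$ then one can extract $k$ pairwise \emph{independent} cycles (vertex-disjoint with no edges between them), losing only a factor $O(k^6\log^3 k)$ --- is not carried out but only described as ``the main technical obstacle.'' In particular, you never explain why a branch vertex $v$ with a subtree $T_v$ hanging off it yields any cycle at all (it does not, unless $T_v$ reattaches to $N[V(C)]$ elsewhere), nor how partitioning $C$ into $k$ arcs and ``pigeonholing'' would produce cycles that are mutually independent rather than merely vertex-disjoint, nor where the specific $k^6\log^3 k$ dependence would arise. Since everything preceding this point is elementary bookkeeping (and partly incorrect), the proposal does not yet contain a proof of the theorem.
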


\begin{theorem}[Bonamy et al.~\cite{MR4723425}]\label{thm:fvsLog}
For any integer $k\ge 2$, there is some $\varepsilon_k = \Omega\left(\frac{1}{20^{k}}\right)$ such that any $\mathcal{O}_k$-free graph $G$ with girth at least 11 has a vertex of degree at least $\varepsilon_k \cdot r(G)$.
\end{theorem}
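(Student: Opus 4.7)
My plan is to prove the statement by induction on $k$, using \Cref{thm:delta-k} as the workhorse. The intuition is that \Cref{thm:delta-k} already handles the situation in which $G$ admits a shortest cycle whose closed neighborhood, when removed, leaves a forest; the only case it does not cover is when no such cycle exists, and then $G - N[V(C)]$ is still cycle-rich enough to hand over to the induction hypothesis at parameter $k-1$.

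For the base case $k = 2$, I would restrict to a connected component of maximum cycle rank and take any shortest cycle $C$. The graph $G - N[V(C)]$ cannot contain a cycle $C'$, since such a $C'$ would be vertex-disjoint from and non-adjacent to $C$, contradicting $\mathcal{O}_2$-freeness; \Cref{thm:delta-k} then applies and yields a vertex of degree $\Omega(r(G))$. For the inductive step $k \ge 3$, I would first pass to a cycle-containing component of cycle rank at least $r(G)/(k-1)$ (one exists because an $\mathcal{O}_k$-free graph has at most $k-1$ components with cycles), fix a shortest cycle $C$, and set $H = G - N[V(C)]$. A short argument shows $H$ is $\mathcal{O}_{k-1}$-free (any $k-1$ pairwise independent cycles in $H$ together with $C$ form $k$ pairwise independent cycles in $G$) and has girth at least $11$. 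If $H$ happens to be a forest, \Cref{thm:delta-k} applies to $G$ directly. Otherwise, I would dichotomize on $r(H)$: if $r(H) \ge r(G)/20$, I would apply the induction hypothesis to a cycle-containing component of $H$ to obtain a vertex of degree $\Omega(\varepsilon_{k-1} r(G))$; if $r(H) < r(G)/20$, the identity $r(G) - r(H) = (|E(G)| - |E(H)|) - (|V(G)| - |V(H)|) + (|C(G)| - |C(H)|)$ forces many edges to be incident to $N[V(C)]$, and an averaging argument over this set should locate a single vertex of high degree in $G$. The recursion $\varepsilon_k \ge \varepsilon_{k-1}/20$ would then give the claimed $\varepsilon_k = \Omega(1/20^k)$.

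The main obstacle I anticipate lies in the second subcase of the inductive step: the averaging argument is controlled by $|N[V(C)]|$, which in general is bounded only by $|V(C)| \cdot (1 + \Delta(G))$, and $|V(C)|$ — the girth of $G$ — is not \emph{a priori} controlled. Threading this through will likely require exploiting the Moore bound (which forces girth to be at most logarithmic when the minimum degree is bounded below, so we can split on whether the minimum degree is small) or a more careful analysis of the local structure around $C$, while simultaneously balancing the two subcases so that a clean recursion of the form $\varepsilon_k \ge \varepsilon_{k-1}/20$ survives. Making all this quantitatively fit into the $\Omega(1/20^k)$ target — rather than, say, a much weaker $1/k!$ bound coming naively from the component-splitting factor $1/(k-1)$ — is the real crux.
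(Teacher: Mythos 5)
First, note that this paper does not actually reprove \cref{thm:fvsLog}: the theorem is imported verbatim from Bonamy et al.\ (their Theorem~5.3), and the only contribution here is to read off the bound $\varepsilon_k=\Omega\left(\frac{1}{20^k}\right)$ from the recursion~\eqref{eq:epsilon-k} implicit in their inductive proof. Your skeleton is consistent with that recursion as far as it goes: the base case $k=2$ (for any shortest cycle $C$, the graph $G-N[V(C)]$ is a forest by $\mathcal{O}_2$-freeness, so \cref{thm:delta-k} applies and $\varepsilon_2=\delta_2$) is correct, as are the observations that $H=G-N[V(C)]$ is $\mathcal{O}_{k-1}$-free of girth at least $11$ and that the subcase $r(H)\ge r(G)/20$ feeds the induction hypothesis with a loss of $20$, matching the term $\varepsilon_{k-1}/20$ in~\eqref{eq:epsilon-k}.

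The genuine gap is the subcase $r(H)<r(G)/20$, and the obstacle is worse than the one you flag. The averaging is not defeated by the girth (which could well be exactly $11$, a constant) but by the fact that $|N[V(C)]|$ is controlled only by the degrees of the vertices of $C$, i.e., by the very quantity you are trying to bound from below. Concretely: at least $r(G)-r(H)>\tfrac{19}{20}r(G)$ edges have an endpoint in $N[V(C)]$ (using $c(H)\ge c(G)-|N[V(C)]|$), so some vertex of $N[V(C)]$ has degree at least $\tfrac{19\,r(G)}{20\,|N[V(C)]|}$. But either some vertex of $C$ already has degree at least $\varepsilon_k\cdot r(G)$, or $|N[V(C)]|\le |V(C)|\left(\varepsilon_k r(G)+1\right)$, in which case the averaging bound degenerates to roughly $\tfrac{19}{20|V(C)|\varepsilon_k}$ --- a constant independent of $r(G)$, useless once $r(G)$ is large. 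The Moore bound cannot rescue this, since it ties girth to $|V(G)|$ and minimum degree, neither of which is controlled by $r(G)$ (the graph may be padded with vertices of degree at most $2$). In addition, restricting to a cycle-containing component of cycle rank at least $r(G)/(k-1)$ at every level of the induction compounds to a loss of order $(k-1)!$, which is not absorbed by $\Omega\left(\frac{1}{20^k}\right)$. These are precisely the points where the extra terms $\delta_k/(5(k+1))$ and $1/(30(k-2))$ in~\eqref{eq:epsilon-k} enter Bonamy et al.'s argument, which handles the small-$r(H)$ regime by a different, more global counting rather than by local averaging around a single shortest cycle. As written, your plan reduces the theorem to exactly the case you identify as the crux and leaves that case open.
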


The bound on $\varepsilon_k$ given in~\Cref{thm:fvsLog} follows from the inductive proof of~\cite[Theorem 5.3]{MR4723425}, where $\varepsilon_k$ is defined as follows (using $\delta_k$ from~\Cref{thm:delta-k}):
$\varepsilon_2 = \delta_2$ and for every $k\ge 3$, the value of $\varepsilon_k$ is defined with the following formula:
\begin{equation}\label{eq:epsilon-k}
    \varepsilon_k \eqdef \min\left\{
      \frac{\varepsilon_{k-1}}{20},
      \frac{\delta_k}{20},
      \frac{\delta_k}{5(k+1)},
      \frac{1}{30(k-2)}
    \right\}.
\end{equation}
From here and the fact that $\delta_k = \Omega\left(\frac{1}{k^6\log^3k}\right)$, we infer that $\varepsilon_k = \Omega\left(\frac{1}{20^{k}}\right)$, as claimed.

\Cref{thm:fvsLog} implies that every $K_{1,t}$-free $\mathcal{O}_k$-free graph with sufficiently large girth has a small feedback vertex set.

\begin{lemma}\label{lem:fvs}
For every two integers $k\ge 1$ and $t\ge 2$ there exists an integer $c_{k,t} = \mathcal{O}\left(20^kt\right)$ such that if $G$ is a $K_{1,t}$-free $\mathcal{O}_k$-free graph with girth at least 11, then $\fvs (G)\le c_{k,t}$.
\end{lemma}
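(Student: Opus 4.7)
The plan is to combine \Cref{thm:fvsLog} with the simple observation that, under the girth assumption, forbidding an induced $K_{1,t}$ forces a bound on the maximum degree. First, since $G$ has girth at least $11$, it is in particular triangle-free, so for every vertex $v$, the set $N(v)$ is an independent set in $G$. If some vertex $v$ had degree at least $t$, then $G[\{v\}\cup N(v)]$ would contain an induced $K_{1,t}$, contradicting the assumption that $G$ is $K_{1,t}$-free. Hence $\Delta(G)\le t-1$.

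Next, I would apply \Cref{thm:fvsLog} directly to $G$: it yields a vertex of degree at least $\varepsilon_k\cdot r(G)$. Combining this with the degree bound from the previous step gives
\[
\varepsilon_k \cdot r(G) \;\le\; \Delta(G) \;\le\; t-1,
\]
so $r(G)\le (t-1)/\varepsilon_k$. Using the bound $\varepsilon_k = \Omega(1/20^k)$ recalled in \Cref{thm:fvsLog} (coming from \eqref{eq:epsilon-k}), we conclude that $r(G)=\mathcal{O}(20^k\cdot t)$.

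Finally, I would convert this cycle-rank bound into a feedback-vertex-set bound by the standard observation $\fvs(G)\le r(G)$: letting $F\subseteq E(G)$ be a minimum set of edges whose removal makes $G$ a forest (so $|F|=r(G)$), the set $S$ obtained by picking one endpoint of each edge of $F$ satisfies $|S|\le r(G)$, and $G-S$ is a subgraph of $G-F$, hence a forest. Combining, we get $\fvs(G) = \mathcal{O}(20^k\cdot t)$, and we may define $c_{k,t} \eqdef \lceil (t-1)/\varepsilon_k\rceil$ to make it an integer with the desired asymptotics.

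There is really no serious obstacle here — the lemma is essentially a packaging of \Cref{thm:fvsLog} together with the elementary degree bound from the girth/star-free hypothesis. The only place to be slightly careful is making the dependence on $t$ linear (rather than, say, quadratic), which is why it matters that the girth assumption lets us extract an induced star directly from the neighborhood of any high-degree vertex, without losing a factor via Ramsey-type arguments.
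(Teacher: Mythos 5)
Your proof is correct and follows essentially the same route as the paper's: girth at least $11$ forces triangle-freeness, so every neighborhood is independent and $K_{1,t}$-freeness caps the degree at $t-1$; combining this with \Cref{thm:fvsLog} gives $r(G)\le (t-1)/\varepsilon_k$ and hence $\fvs(G)\le r(G)\le c_{k,t}=\mathcal{O}(20^k t)$. The only detail to add is the case $k=1$, where \Cref{thm:fvsLog} does not apply and $\varepsilon_1$ is undefined; there an $\mathcal{O}_1$-free graph is a forest, so one simply sets $c_{1,t}=0$, exactly as the paper does.
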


\begin{proof}
If a graph $G$ is $\mathcal{O}_1$-free, then $G$ is a forest so $\fvs(G)=0$, and thus we can take $c_{1,t}=0$ for every $t\geq 2$.

Suppose now that $k\ge 2$.
Fix $t\ge 2$ and let $G$ be a $K_{1,t}$-free $\mathcal{O}_k$-free graph with girth at least 11.
Let $\varepsilon_k$ be the constant given by \cref{thm:fvsLog}.
By \cref{thm:fvsLog}, $G$ has a vertex $v\in V(G)$ such that $d(v)\ge \varepsilon_k \cdot r(G)$.
Notice that $N(v)$ is an independent set, since otherwise $G$ would contain a cycle of length 3, contradicting the fact that $G$ has girth at least 11.
Hence, since $G$ is $K_{1,t}$-free, the size of $N(v)$ is at most $t-1$.
It follows that $t-1\ge d(v)$ and this gives us $\frac{t-1}{\varepsilon_k}\ge r(G)$.
Since the cycle rank $r(G)$ is the smallest number of edges of $G$ that we must remove from $G$ to make it a forest, $r(G)$ is also an upper bound on the size of a minimum feedback vertex set, i.e., $\fvs (G)\le r(G)$.
This shows that $\fvs (G)\le\lfloor \frac{t-1}{\varepsilon_k}\rfloor$.
Letting $c_{k,t}=\lfloor \frac{t-1}{\varepsilon_k}\rfloor$, gives us the desired inequality $\fvs (G)\le c_{k,t}$.
Note that $c_{k,t} = \mathcal{O}\left(20^kt\right)$,
since $\varepsilon_k = \Omega\left(\frac{1}{20^{k}}\right)$.
\end{proof}

\begin{remark}\label{rem:ckt-nondecreasing}
Observe that, for every $k\ge 1$ and $t\ge 2$, we can choose $c_{k,t}$ to be the smallest integer satisfying the condition of \Cref{lem:fvs}.
Therefore, for every $t\geq 2$, the sequence $(c_{k,t})_{k\ge 1}$ can be assumed to be nondecreasing, that is, $c_{k,t}\ge c_{k-1,t}$ for all $k,t\ge 2$.
Similarly, for every $k\geq 1$, the sequence $(c_{k,t})_{t\ge 2}$ can be assumed to be nondecreasing (although we will not need this fact).
\end{remark}

\begin{lemma}\label{lem: bddfvs}
For every two integers $k\ge 1$ and $t\ge 2$, every $K_{1,t}$-free $\mathcal{O}_k$-free graph $G$ contains two sets $S_1,S_2\subseteq V(G)$ such that:
\begin{itemize}
\item $|S_1|\le c_{k,t}$, where $c_{k,t}$ is the integer given by \Cref{lem:fvs},
\item $S_2$ induces a subgraph with independence number at most $10(k-1)(t-1)$, and
\item $S_1\cup S_2$ is a feedback vertex set in $G$.
\end{itemize}
\end{lemma}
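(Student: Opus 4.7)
The plan is to proceed by induction on $k$. For the base case $k = 1$, every $\mathcal{O}_1$-free graph is a forest, so I can take $S_1 = S_2 = \emptyset$ and all three conditions hold trivially (recall $c_{1,t} = 0$ and $10(k-1)(t-1) = 0$).

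For the inductive step with $k \ge 2$, I split into two cases based on the girth of $G$. If $G$ has girth at least $11$, then \Cref{lem:fvs} directly provides a feedback vertex set $S_1$ of size at most $c_{k,t}$, and I take $S_2 = \emptyset$. Otherwise, $G$ contains a cycle $C$ of length $\ell \le 10$. The key structural reduction is to remove $N[V(C)]$ from $G$ and then apply induction to the remainder $G - N[V(C)]$. Crucially, this remainder is still $K_{1,t}$-free and is moreover $\mathcal{O}_{k-1}$-free: any $k-1$ pairwise independent cycles in $G - N[V(C)]$, together with $C$, would form $k$ pairwise independent cycles in $G$. So the inductive hypothesis yields sets $S_1', S_2'$ with $|S_1'| \le c_{k-1,t}$, $\alpha((G - N[V(C)])[S_2']) \le 10(k-2)(t-1)$, and $S_1' \cup S_2'$ a feedback vertex set in $G - N[V(C)]$. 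Setting $S_1 := S_1'$ and $S_2 := S_2' \cup N[V(C)]$, it is immediate that $S_1 \cup S_2$ is a feedback vertex set in $G$ (adding $N[V(C)]$ to the deletion set certainly still leaves a forest), and the size bound $|S_1| \le c_{k-1,t} \le c_{k,t}$ follows from the monotonicity noted in \Cref{rem:ckt-nondecreasing}.

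The remaining task is to bound $\alpha(G[S_2])$, and the crux is showing that $\alpha(G[N[V(C)]]) \le 10(t-1)$. For any independent set $I \subseteq N[V(C)]$, split it into $I_C := I \cap V(C)$ and $I' := I \setminus V(C)$. Every vertex $u \in I'$ is adjacent to some $v \in V(C)$, and since $I$ is independent, $v \notin I_C$; moreover, $I' \cap N(v)$ is an independent subset of $N(v)$ and so has size at most $t-1$ by $K_{1,t}$-freeness. Hence
\[
|I| \;\le\; |I_C| + \sum_{v \in V(C) \setminus I_C} |I' \cap N(v)| \;\le\; |I_C| + (\ell - |I_C|)(t-1) \;\le\; \ell(t-1) \;\le\; 10(t-1),
\]
where the penultimate inequality uses $t \ge 2$. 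Combining this with the straightforward fact $\alpha(G[A \cup B]) \le \alpha(G[A]) + \alpha(G[B])$ yields $\alpha(G[S_2]) \le 10(k-2)(t-1) + 10(t-1) = 10(k-1)(t-1)$, as required. The main conceptual point, which is easy to miss if one only thinks to delete $V(C)$, is that removing the full \emph{closed} neighborhood $N[V(C)]$ is precisely what makes both the inductive step (giving $\mathcal{O}_{k-1}$-freeness of the remainder) and the independence-number bookkeeping (giving the $10(t-1)$ contribution from $K_{1,t}$-freeness) work out to exactly the prescribed constant.
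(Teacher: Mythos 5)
Your proposal is correct and follows essentially the same route as the paper's proof: induction on $k$, a girth case split using \Cref{lem:fvs}, and removal of the closed neighborhood of a short cycle, with the independence number of $G[N[V(C)]]$ bounded by $10(t-1)$ via $K_{1,t}$-freeness. The only cosmetic difference is that you spell out the counting argument for $\alpha(G[N[V(C)]])$ vertex by vertex, whereas the paper simply notes $\alpha(G[N[v]])\le t-1$ for each of the at most $10$ vertices $v\in V(C)$ and sums.
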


\begin{proof}
Let us verify that the stated properties hold by induction on $k$.
If $k=1$, then $G$ is acyclic, so we can take $S_1 = S_2 = \emptyset$.

Let $k\ge 2$ and let $G$ be a $K_{1,t}$-free $\mathcal{O}_k$-free graph for some fixed $t\ge 2$.
If the girth of $G$ is at least 11, by \Cref{lem:fvs}, $G$ admits a feedback vertex set  $S_1$ with $|S_1|\le c_{k,t}$.
Such a set $S_1$ together with $S_2=\emptyset$ yields a pair of sets as required.

Now suppose that the girth of $G$ is at most 10.
Let $C$ be a shortest cycle in $G$, let $N$ be the neighborhood of $V(C)$, and let $R=V(G) \setminus (V(C)\cup N)$.
Then $|V(C)|\le 10$.
Notice that there are no edges between the vertices in $C$ and $R$, therefore $G[R]$ is a $K_{1,t}$-free $\mathcal{O}_{k-1}$-free graph.
By the induction hypothesis, $G[R]$ contains two sets $R_1, R_2\subseteq R$ such that $|R_1|\le c_{k-1,t}$, the set $R_2$ induces a subgraph with independence number at most $10(k-2)(t-1)$, and $R_1\cup R_2$ is a feedback vertex set in $G[R]$.
Notice also that for every $v\in V(C)$, the independence number of $G[N[v]]$ is at most $t-1$, since $G$ is $K_{1,t}$-free.
It follows that $\alpha(G[V(C)\cup N])\le 10(t-1)$.
Define $S_1 = R_1$ and $S_2 = V(C)\cup N\cup R_2$.
Since $R_1\cup R_2$ is a feedback vertex set in $G[R]$, $S_1\cup S_2$ is a feedback vertex set in $G$.
Furthermore, \[\alpha(G[S_2]) \le 10(t-1)+10(k-2)(t-1)=10(k-1)(t-1)\] and $|S_1| = |R_1|\le c_{k-1,t} \le c_{k,t}$ (where the last inequality is justified by \Cref{rem:ckt-nondecreasing}).
This completes the induction step and with it the proof of the theorem.
\end{proof}

\begin{sloppypar}
\cref{lem: bddfvs} implies that the tree-independence number of $K_{1,t}$-free $\mathcal{O}_k$-free graphs is bounded, leading to the following simpler proof of \cref{thm:K1t-free-Ok-free-tree-alpha-intro}.
\end{sloppypar}

\begin{proof}[Proof of \cref{thm:K1t-free-Ok-free-tree-alpha-intro}.]
Fix two integers $k\ge 1$ and $t\ge 2$, and let $G$ be a $K_{1,t}$-free $\mathcal{O}_k$-free graph.
By \cref{lem: bddfvs}, there exist two sets $S_1,S_2\subseteq V(G)$ such that $|S_1|\le c_{k,t}$, the set $S_2$ induces a subgraph of $G$ with independence number at most $10(k-1)(t-1)$, and $S_1\cup S_2$ is a feedback vertex set in $G$, where $c_{k,t}$ is the integer from \cref{lem:fvs}.

Let $G' = G \setminus (S_1 \cup S_2)$.
Note that $G'$ is acyclic, hence chordal, and therefore by \cref{lem chordal tin}, \hbox{$\tin (G')\le 1$}.
Let $\mathcal{T}'$ be a tree decomposition of $G'$ with independence number at most 1.
By adding the vertices in $S_1\cup S_2$ to every bag of $\mathcal{T}'$, we obtain a tree decomposition $\mathcal{T}$ of $G$.

Since $\alpha(G[S_1\cup S_2])\le c_{k,t}+10(t-1)(k-1)$, the independence number of every bag of $\mathcal{T}$ is at most $c_{k,t}+10(t-1)(k-1) + 1$.
Setting $N_{k,t} = c_{k,t} + 10(t-1)(k-1) + 1$, this gives us that $\tin (G)\le N_{k,t}$.
Since  $c_{k,t}= \mathcal{O}\left(20^kt\right)$, the same bound holds also for $N_{k,t}$.
\end{proof}

We conclude this section by explaining why the statement of  \cref{thm:K1t-free-Ok-free-tree-alpha-intro} cannot be generalized to $K_{t,t}$-free graphs, even for $k = 2$.

\begin{proposition}\label{prop:unbounded-tree-alpha}
There exists an infinite family $\mathcal{G}$ of $K_{2,2}$-free $\mathcal{O}_2$-free graphs such that for each $G\in \mathcal{G}$, the tree-independence number of $G$ is at least \hbox{$(\log_2|V(G)|-1)/2$}.
\end{proposition}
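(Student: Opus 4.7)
The plan is to exhibit, for each integer $k\ge 1$, a graph $G_k$ that is $K_{2,2}$-free and $\mathcal{O}_2$-free, has at most $2^{2k+1}$ vertices, and satisfies $\tin(G_k)\ge k$. Once this is done, the family $\mathcal{G}=\{G_k : k\ge 1\}$ is infinite and each $G_k$ satisfies $\tin(G_k) \ge k \ge (\log_2|V(G_k)|-1)/2$, as required. The natural approach is a recursive construction. I would take $G_1 = C_5$, which has $5 \le 2^3$ vertices, no induced $C_4$, a unique cycle (hence is vacuously $\mathcal{O}_2$-free), and $\tin(C_5)=2\ge 1$. For $k\ge 1$, given $G_k$ with a designated ``attachment set'' of bounded size, I would define $G_{k+1}$ by taking a bounded number (at most four) of disjoint copies of $G_k$ and joining them through a small connector gadget, in a way that preserves both forbidden-subgraph properties while forcing the tree-independence number to go up by at least one. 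Such a construction, or a minor variant, is already implicit in the work of Bonamy et al.~\cite{MR4723425}, whose lower-bound examples for logarithmic treewidth in the $\mathcal{O}_k$-free setting provide the same kind of graphs.

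For the inductive step, the connector gadget must be designed so as to satisfy three things. First, no new induced $C_4$ is introduced; this can be arranged by subdividing the connecting edges enough that any putative induced $C_4$ would lie entirely in one copy of $G_k$, where it is excluded by induction. Second, every two cycles that are born in different copies of $G_k$ are joined by a direct edge of the connector; this preserves $\mathcal{O}_2$-freeness, since any two vertex-disjoint cycles from different copies will then share an edge of the connector. Both of these requirements reduce to local checks once the gadget is specified, and can be verified by elementary girth and adjacency arguments using the inductive hypothesis.

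The main obstacle is the third requirement, $\tin(G_{k+1}) \ge k+1$. I would argue by contradiction: assuming a tree decomposition $(T,\{X_t\}_{t\in V(T)})$ of $G_{k+1}$ with $\alpha(G_{k+1}[X_t])\le k$ for every $t$, I would apply a balanced-separator-style argument to find a bag $X_{t^*}$ that splits the copies of $G_k$ in a roughly balanced way. Exploiting the design of the connector, the restriction of the tree decomposition to the heaviest copy should yield a tree decomposition of that copy whose bag independence number is at most $k-1$, contradicting $\tin(G_k)\ge k$. Designing the connector so that this balanced-separator argument really does ``drop'' the independence number by one on at least one copy---while still respecting the two forbidden-subgraph conditions---is the technical heart of the proof and I expect this to be the most delicate step. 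With $|V(G_{k+1})|\le 4|V(G_k)|+O(1)$, the recursion gives the desired size bound $|V(G_k)|\le 2^{2k+1}$ and hence the claimed logarithmic lower bound on $\tin(G_k)$.
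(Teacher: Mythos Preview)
Your proposal is not a proof but a plan, and the plan has genuine gaps. You never specify the connector gadget, and once you try to, you will find that requirements (1) and (2) pull in opposite directions: subdividing the connector edges enough to kill all new induced $C_4$'s introduces long paths between the copies, while your $\mathcal{O}_2$-freeness argument needs every cycle in one copy to be \emph{adjacent} to every cycle in another copy. With several copies of $G_k$ attached through a small gadget, arranging this simultaneously with girth~$\ge 5$ is far from automatic. The third requirement is even more problematic: a ``balanced-separator-style'' argument can locate a bag meeting several copies, but there is no mechanism by which restricting the decomposition to one copy would force the bag independence number to \emph{drop} by one; bags of the restricted decomposition are simply intersections with that copy, and those intersections can still have independence number $k$. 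You have not supplied the idea that would make this work.

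The paper bypasses all of this. It does not attempt a direct recursive lower bound on $\tin$. Instead it takes the family from Bonamy et al.\ (which already gives $\mathcal{O}_2$-free, $K_3$-free graphs with $\tw(G)\ge \log_2|V(G)|-1$, each consisting of an induced path plus an independent set), subdivides every edge of the path once to make the graph bipartite and $K_{2,2}$-free, and then uses the elementary fact that in a bipartite graph every bag of a tree decomposition has size at most twice its independence number, so $\tw(G')\le 2\tin(G')-1$. Subdivision preserves treewidth, the vertex count at most doubles, and the logarithmic bound on $\tin$ falls out in two lines. The key insight you are missing is to reduce the $\tin$ lower bound to a $\tw$ lower bound via bipartiteness, rather than attacking $\tin$ directly.
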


\begin{proof}
Bonamy et al.~\cite{MR4723425} constructed an infinite family $\mathcal{G}$ of $\mathcal{O}_2$-free graphs not containing $K_{3,3}$ or $K_3$ as a subgraph such that for each $G\in \mathcal{G}$, the treewidth of $G$ is at least \hbox{$\log_2|V(G)|-1$} (see~\cite[Theorem 2.1]{MR4723425} and the comment following it).
Each such graph $G$ consists of an induced path $P$ such that the set $V(G)\setminus V(P)$ is independent.
Subdividing each edge of $P$ transforms $G$ into a $K_{2,2}$-free $\mathcal{O}_2$-free bipartite graph $G'$ with $|V(G)|\le |V(G')| \le 2|V(G)|$.
Since the treewidth does not change by subdividing edges, we have that $\tw(G') \ge \log_2|V(G)|-1\ge \log_2|V(G')|-2$.
Now, since $G'$ is bipartite, if $G'$ has a tree decomposition $\mathcal{T}$ with independence number $k$, then each bag induces a subgraph of $G'$ with at most $2k$ vertices.
Therefore, $\tw(G')\le 2\tin(G')-1$, which implies that $\tin(G')\ge (\log_2|V(G')|-1)/2$.
\end{proof}

\section{Tree-independence number of $( P_3 + P_1)$-free graphs}\label{sec:P3P1-free}

Recall that for a graph $G$, we denote by $\ibn(G)$ the \emph{induced biclique number} of $G$, that is, the largest nonnegative integer $s$ such that $G$ contains an induced subgraph isomorphic to $K_{s,s}$.
The importance of this parameter for tree-independence number is given by the following lemma.

\begin{lemma}[Dallard et al.~\cite{dallard2024treewidth}]\label{lem:ibn}
Let $G$ be a graph.
Then $\tin (G) \ge \ibn(G)$.
\end{lemma}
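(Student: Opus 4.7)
The plan is to use the classical correspondence between a tree decomposition and subtrees of the decomposition tree, combined with the Helly property for subtrees of a tree. Suppose $\ibn(G)=s$, witnessed by an induced $K_{s,s}$ with parts $A=\{a_1,\ldots,a_s\}$ and $B=\{b_1,\ldots,b_s\}$. Note that $A$ and $B$ are independent sets in $G$, each of size $s$. Let $\mathcal{T}=(T,\{X_t\}_{t\in V(T)})$ be an arbitrary tree decomposition of $G$. The goal is to exhibit a single bag $X_{t^*}$ such that $\alpha(G[X_{t^*}])\ge s$, which will give $\alpha(\mathcal{T})\ge s$ and hence $\tin(G)\ge s$ upon minimizing over $\mathcal{T}$.

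The key objects are the subtrees $T_v\eqdef T[\{t\in V(T):v\in X_t\}]$ for each $v\in V(G)$, which are connected by the definition of a tree decomposition. Since $a_ib_j\in E(G)$ for all $i,j$, the subtrees $T_{a_i}$ and $T_{b_j}$ share at least one node for every pair $(i,j)$. I would then case-split on whether the subtrees in $\{T_{a_1},\ldots,T_{a_s}\}$ pairwise intersect. If they do, the Helly property of subtrees of a tree yields a common node $t^*$, and then $A\subseteq X_{t^*}$, producing an independent set of size $s$ in the bag. Otherwise there exist indices $i\ne j$ such that $T_{a_i}\cap T_{a_j}=\emptyset$; then for every $k$ the subtree $T_{b_k}$ meets both $T_{a_i}$ and $T_{a_j}$, so by connectedness it must contain the entire path in $T$ joining $T_{a_i}$ and $T_{a_j}$. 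In particular, all subtrees $T_{b_1},\ldots,T_{b_s}$ share a common node (any node on that path), and applying Helly a second time (or simply picking a node on the path) gives a node $t^*$ with $B\subseteq X_{t^*}$.

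In either case we obtain a bag whose induced subgraph contains an independent set of size $s$, so $\alpha(\mathcal{T})\ge s$; taking the minimum over all tree decompositions yields $\tin(G)\ge \ibn(G)$.

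The main conceptual step, and the only place where care is needed, is the second case: one must argue cleanly that two disjoint subtrees of a tree force every subtree meeting both of them to contain the connecting path, and then use this to reduce the $B$-side to the Helly setting. Apart from this, the proof is essentially a bookkeeping exercise on subtrees. No obstacle of a technical nature is expected; the argument is short and self-contained once the Helly property and the standard fact about subtrees meeting two disjoint subtrees are invoked.
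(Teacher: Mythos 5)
Your proof is correct, and since the paper only cites this lemma from~\cite{dallard2024treewidth} without reproving it, there is no in-paper argument to diverge from; your route is the standard one used for such statements, namely showing that in any tree decomposition some bag must fully contain one side of the induced $K_{s,s}$ (via the Helly property of subtrees when the $T_{a_i}$ pairwise meet, and via the connecting-path argument forcing all $T_{b_k}$ through a common node otherwise), and that side is an independent set of size $s$ because the biclique is induced. The case analysis and the claim that a subtree meeting two disjoint subtrees must contain the path joining them are both sound, so the argument goes through as written.
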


Our approach is based on the following structural characterization of $(P_3+P_1)$-graphs.
The complement of $P_3+P_1$ is the \emph{paw}, that is, the graph with vertex set $\{a,b,c,d\}$ such that $\{a,b,c\}$ induce a triangle and there is one additional edge $cd$.
Olariu~\cite{zbMATH04066954} gave the following characterization of paw-free graphs.

\begin{theorem}[Olariu~\cite{zbMATH04066954}]
\label{thm:paw-free}
A graph $G$ is paw-free if and only if each component of~$G$ is triangle-free or complete multipartite.
\end{theorem}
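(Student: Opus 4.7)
The plan is to prove the two directions separately; the forward direction is a brief check, while the backward direction requires the bulk of the work and proceeds by classifying vertices according to their adjacencies to a fixed triangle.

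For the forward implication, I would observe that the paw is connected and contains a triangle, so any induced paw in $G$ must lie in a single component $H$. A triangle-free $H$ obviously contains no induced paw. If $H$ is complete multipartite, then any triangle $\{a,b,c\}$ of $H$ must use three distinct parts, and a fourth vertex $d$ either lies in a new part (so $d$ is adjacent to all of $a,b,c$) or in the same part as exactly one of them (so $d$ is non-adjacent to that one and adjacent to the other two); in either case $|N(d)\cap\{a,b,c\}|\in\{2,3\}$, never the single neighbor demanded by the pendant of the paw.

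For the backward direction, let $H$ be a component of $G$ that contains a triangle $T=\{a,b,c\}$; I would show that $H$ is complete multipartite using the standard characterization that complete multipartite graphs are exactly the $(K_2+K_1)$-free graphs (equivalently, graphs whose non-adjacency-or-equal relation is an equivalence). The first step is to classify each $v\in V(H)\setminus T$ by the cardinality $|N(v)\cap T|$. Paw-freeness instantly forbids $|N(v)\cap T|=1$, since $\{a,b,c,v\}$ would then induce a paw; and the connectedness of $H$ forbids $|N(v)\cap T|=0$ by a shortest-path argument: the second-to-last vertex $u$ of a shortest path from such a $v$ to $T$ has at least two neighbors $b',c'$ in $T$, and together with its predecessor $w$ on the path (which has no neighbor in $T$ by minimality), the set $\{u,b',c',w\}$ induces a paw. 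This gives a partition $V(H)=Q_a\cup Q_b\cup Q_c\cup D$, where $Q_x=\{x\}\cup\{v:N(v)\cap T = T\setminus\{x\}\}$ and $D$ consists of the vertices whose neighborhood contains all of $T$.

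The remaining work is a sequence of short paw-spotting arguments that fix the cross-structure: each $Q_x$ is independent (an edge inside $Q_a$ would form a triangle with $b$, pendanted by $a$); the three sets $Q_a,Q_b,Q_c$ are pairwise complete (a non-edge between $u\in Q_a$ and $v\in Q_b$ together with $a$ and $c$ exhibits a paw); and $D$ is complete to $Q_a\cup Q_b\cup Q_c$ by a similar check. The step I expect to be the main obstacle is handling $D$, whose internal structure is not constrained by the analysis so far; the key observation is that every vertex of $D$ is adjacent to $a$, so any induced $K_2+K_1$ inside $D$ would combine with $a$ to produce an induced paw in $H$. Hence $G[D]$ is itself $(K_2+K_1)$-free and therefore complete multipartite, and refining the partition by the parts of $G[D]$ presents $H$ as complete multipartite with parts $Q_a,Q_b,Q_c$ together with the parts of $G[D]$, completing the proof.
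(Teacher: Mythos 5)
Your proof is correct and complete; note that the paper itself gives no proof of this statement, citing it as a known theorem of Olariu, so there is no internal argument to compare against. Your route --- ruling out vertices with exactly one or zero neighbours on a fixed triangle, partitioning the component into the classes $Q_a,Q_b,Q_c$ and the set $D$ of vertices complete to the triangle, and finishing via the $(K_2+K_1)$-free characterization of complete multipartite graphs --- is the standard argument for this result, and all the paw-spotting steps check out, including the shortest-path argument for vertices with no neighbour on the triangle and the reduction of $G[D]$ to the $(K_2+K_1)$-free case using the common neighbour $a$.
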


The \emph{join} of two vertex-disjoint graphs $G_1$ and $G_2$ is a graph obtained from the disjoint union of $G_1$ and $G_2$ by adding to it all edges between $G_1$ and $G_2$.

\begin{observation}\label{obs:join}
The independence number of the join of $G_1$ and $G_2$ is $\max\{\alpha(G_1), \alpha(G_2)\}$.
\end{observation}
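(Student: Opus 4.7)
The plan is to check both inequalities directly from the definition of the join. Writing $G$ for the join of $G_1$ and $G_2$, I would first establish the upper bound $\alpha(G) \le \max\{\alpha(G_1),\alpha(G_2)\}$. Let $S$ be any independent set in $G$. Since every vertex in $V(G_1)$ is adjacent in $G$ to every vertex in $V(G_2)$, the set $S$ cannot contain a vertex from both sides, so $S \subseteq V(G_i)$ for some $i \in \{1,2\}$. The join introduces no edges inside $V(G_i)$, so $G[S] = G_i[S]$, and hence $S$ is already independent in $G_i$. This gives $|S| \le \alpha(G_i) \le \max\{\alpha(G_1),\alpha(G_2)\}$.

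For the matching lower bound, I would pick a maximum independent set $I$ in whichever of $G_1$ or $G_2$ realizes $\max\{\alpha(G_1),\alpha(G_2)\}$. Since $G_1$ and $G_2$ both appear as induced subgraphs of $G$, the set $I$ remains independent in $G$, yielding $\alpha(G) \ge |I| = \max\{\alpha(G_1),\alpha(G_2)\}$.

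There is no genuine obstacle here; the observation is immediate once the definition of the join is unpacked. The only conceptual content is the bipartite structure of the added edges: across the partition $(V(G_1),V(G_2))$ every pair is adjacent, which forces any independent set to lie on one side, while within each $V(G_i)$ the edge set of $G_i$ is preserved unchanged. I would expect the actual written proof to occupy only one or two sentences.
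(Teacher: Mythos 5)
Your proof is correct and is exactly the standard argument the paper leaves implicit (the observation is stated without proof): every cross pair is adjacent in the join, so an independent set lies entirely in one side, and each side is an induced subgraph, giving both inequalities. Nothing further is needed.
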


We are now ready to characterize the tree-independence number of $(P_3 + P_1)$-free graphs.
For simplicity of presentation, we restrict ourselves to graphs with at least one edge.
(Edgeless graphs are chordal and can thus be handled, e.g., with \cref{lem chordal tin}.)

\thmPthreePone*

\begin{proof}
First we show that $\tin(G) \leq \max \{ \ibn(G), 2\}$ by induction on the number $k$ of connected components of $\overline{G}$.
If $k = 1$, then $\overline{G}$ is connected and by~\Cref{thm:paw-free}, it follows that $\overline{G}$ is either triangle-free or complete multipartite.
In the first case, $\alpha(G)\le 2$, and in the second case, $G$ is a disjoint union of complete graphs, hence, $G$ is a chordal graph, and by \cref{lem chordal tin}, $\tin(G)\le 1$.
Hence in both cases, it holds that $\tin(G) \le 2$.

Let $k>1$ and assume that, for any $(P_3+P_1)$-free graph $H$ whose complement has at most $k-1$ connected components, it holds that $\tin(H) \leq \max \{ \ibn(H), 2 \}$.
Let $G$ be a graph whose complement consists of $k$ connected components and let $s=\max\{\ibn(G),2\}$.
Since $\overline{G}$ is not connected, the vertices of $G$ can be partitioned into two nonempty sets $A_1$ and $A_2$, such that $G$ is the join of $G[A_1]$ and $G[A_2]$.

If both $G[A_1]$ and $G[A_2]$ have independence number more that $s$, then $G$ contains $K_{s+1,s+1}$ as induced subgraph, which contradicts the fact that $\ibn(G)\leq s$.
So we may assume without loss of generality that $\alpha(G[A_1])\le s$.
By the induction hypothesis, and since $\ibn(G[A_2])\leq\ibn (G)$, we have that $\tin(G[A_2])\leq s$.
Let $\mathcal{T}$ be a tree decomposition of $G[A_2]$ with independence number at most $s$.
By adding $A_1$ to each bag of $\mathcal{T}$, we get a tree decomposition $\mathcal{T}'$ of $G$.
Then for each bag $X_t'$ of $\mathcal{T}'$, there is a bag $X_t$ of $\mathcal{T}$ such that the subgraph of $G$ induced by $X_t'$ is  the join of $G[X_t]$ and $G[A_1]$, thus, by \Cref{obs:join}, $\alpha(G[X_t'])=\max\{\alpha(G[X_t]),\alpha(G[A_1])\}\le s$.
Therefore, $\tin(G) \le s$.

\begin{sloppypar}
Since $G$ has at least one edge, $\ibn(G)\ge 1$.
Suppose first that $G$ contains an induced $C_4$.
Then, $\ibn(G)\ge 2$ and consequently, by \cref{lem:ibn}, $\ibn(G)\le \tin(G)\le \max \{ \ibn(G), 2\} = \ibn(G)$; hence, equalities must hold throughout, implying in particular that $\tin(G) = \ibn(G)$.
Suppose now that $G$ is $C_4$-free.
Then $\ibn(G) = 1$.
If $G$ contains an induced $C_5$, then ${\tin(G) \ge 2}$ by \cref{lem chordal tin} and $\tin(G)\le \max \{ \ibn(G), 2\} = 2$, implying that $\tin(G) = 2$.
So we may assume that $G$ is also $C_5$-free.
Then $G$ is chordal, since otherwise $G$ would contain an induced cycle of length at least $6$, which is impossible as $G$ is $(P_3+P_1)$-free.
Hence, $1 = \ibn(G) \le \tin(G)\le 1$ by  \cref{lem chordal tin} and equalities must hold throughout; in particular, $\tin(G) = \ibn(G)$.
\end{sloppypar}
\end{proof}

As an immediate consequence of \Cref{thm:P3+P1}, we obtain the following partial support for the case of \Cref{conjecture:excluding-a-path} for the classes of $\{P_5,K_{t,t}\}$-free graphs.

\begin{corollary}\label{cor:P3+P1intro}
Let $t$ be a positive integer and let $G$ be a $\{ P_3 + P_1, K_{t,t}\}$-free graph.
Then, $\tin(G)\le t$.
\end{corollary}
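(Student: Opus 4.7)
The plan is to derive the corollary directly from \cref{thm:P3+P1} by a short case analysis, after disposing of the edgeless case separately.

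If $G$ has no edges, then placing each vertex of $G$ in its own bag along any tree (or taking an empty decomposition if $V(G)=\emptyset$) yields a tree decomposition of independence number at most $1$, so $\tin(G)\le 1\le t$. From now on I may assume that $G$ has at least one edge; in particular, the hypothesis that $G$ is $K_{t,t}$-free rules out $t=1$ (since $K_{1,1}$ is just an edge), so $t\ge 2$.

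Now I apply \cref{thm:P3+P1} to $G$. In the exceptional case where $G$ is $C_4$-free but contains an induced $C_5$, the theorem yields $\tin(G)=2$, and since $t\ge 2$ we get $\tin(G)\le t$. Otherwise \cref{thm:P3+P1} gives $\tin(G)=\ibn(G)$; the $K_{t,t}$-freeness of $G$ is precisely the statement that $G$ contains no induced $K_{t,t}$, so by definition $\ibn(G)\le t-1$, and hence $\tin(G)\le t-1\le t$.

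There is no genuine obstacle here: the hard work is entirely contained in \cref{thm:P3+P1}, and the corollary is a short bookkeeping deduction. The only point requiring minor care is the small-$t$ boundary $t=1$, which is handled by the observation that a $K_{1,1}$-free graph is edgeless and therefore falls into the trivial case treated at the outset.
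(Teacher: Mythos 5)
Your proof is correct and matches the paper's (implicit) argument: the paper presents \cref{cor:P3+P1intro} as an immediate consequence of \cref{thm:P3+P1}, and your deduction --- $\ibn(G)\le t-1$ from $K_{t,t}$-freeness in the generic case, $\tin(G)=2\le t$ in the $C_5$ case after noting $t\ge 2$ whenever $G$ has an edge --- is exactly the intended bookkeeping. Your explicit handling of the edgeless and $t=1$ boundary cases is a welcome bit of extra care.
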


\section{Tree-independence number of $\{P_4+P_1,C_4\}$-free graphs}\label{sec:P4P1C4-free}

In this section, we prove \Cref{thm:tccn}, stating that that the tree-clique-cover number of every $\{P_4+P_1,C_4\}$-free graph is at most $3$.
Our approach is based on first deriving an analogous bound on the tree-independence number, by showing that the class of $\{P_4+P_1,C_4\}$-free graphs is a subclass of the class of $K_{2,3}$-induced-minor-free graphs and applying a known bound on the tree-independence number of $K_{2,3}$-induced-minor-free graphs from~\cite{dallard2023treewidth}.
Then, we show how to refine the bound by invoking a result on the chromatic number of $\{2K_2$, gem$\}$-free graphs from~\cite{zbMATH06994879}.

We start with some definitions.
A \emph{prism} is a graph made of three vertex-disjoint chordless paths $P^1 = a_1\ldots b_1$, $P^2 = a_2\ldots b_2$, and $P^3 = a_3\ldots b_3$ of length at least 1, such that $\{a_1, a_2, a_3\}$ and $\{b_1, b_2, b_3\}$ are triangles and no edges exist between the paths except those of the two triangles.
A prism is \emph{long} if at least one of its three paths has length at least 2.

A \emph{pyramid} is a graph made of three chordless paths $P^1 = a\ldots b_1,$ $P^2 = a\ldots b_2$, $P^3 = a\ldots b_3$ of length at least 1, two of which have length at least 2, vertex-disjoint except at $a$, and such that $\{b_1, b_2, b_3\}$ is a triangle and no edges exist between the paths except those of the triangle and the three edges incident to $a$.

A \emph{theta} is a graph made of three internally vertex-disjoint chordless paths $P^1,P^2,P^3$ of length at least 2 with extremities $a,b$ and such that no edges exist between the paths except the three edges incident to $a$ and the three edges incident to $b$.

A \emph{hole} in a graph is a chordless cycle of length at least 4.
A \emph{wheel} $W = (H, x)$ is a graph formed by a hole $H$ (called the \emph{rim}) together with a vertex $x$ (called the \emph{center}) that has at least three neighbors in the hole.
A \emph{sector} of $W$ is a subpath of $H$ between two consecutive neighbors of $x$.
Note that $H$ is edgewise partitioned into the sectors of $W$. Also, every wheel has at least three sectors.
A wheel is \emph{broken} if at least two of its sectors have length at least 2.

The above families appear in the following characterization of induced subgraphs that must be present in any graph containing $K_{2,3}$ as an induced minor, due to Dallard et al.~\cite{DBLP:conf/iwoca/DallardDHMPT24}.

\begin{lemma}[Dallard et al.~\cite{DBLP:conf/iwoca/DallardDHMPT24}]\label{lemma:K_23}
A graph contains $K_{2,3}$ as an induced minor if and only if it contains a long prism, a pyramid, a theta, or a broken wheel as an induced subgraph.
\end{lemma}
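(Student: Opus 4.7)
The plan is to prove both directions of the equivalence separately.

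For the backward direction, I would verify case by case that each of the four structures admits explicit branch sets $A_1, A_2, B_1, B_2, B_3$ witnessing $K_{2,3}$ as an induced minor, with $\{A_1, A_2\}$ and $\{B_1, B_2, B_3\}$ playing the roles of the two parts of the bipartition of $K_{2,3}$. For a theta, set $A_1 = \{a\}$, $A_2 = \{b\}$, and let $B_i$ be the set of internal vertices of $P^i$; chordlessness of the paths and the absence of cross-edges give all required non-adjacencies. For a pyramid with $P^1, P^2$ of length at least $2$, set $A_1 = \{a\}$, $A_2 = \{b_1, b_2\}$ (connected via the triangle edge $b_1 b_2$), $B_j = V(P^j) \setminus \{a, b_j\}$ for $j \in \{1, 2\}$, and $B_3 = V(P^3) \setminus \{a\}$; the non-adjacency $A_1 \not\sim A_2$ follows because $P^1, P^2$ are chordless of length at least $2$, so $a \not\sim b_1, b_2$. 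For a long prism, assuming WLOG that $P^1$ has length at least $2$, set $A_1 = \{a_1, a_2\}$, $A_2 = \{b_1, b_3\}$, $B_1 = V(P^1) \setminus \{a_1, b_1\}$, $B_2 = V(P^3) \setminus \{b_3\}$, and $B_3 = V(P^2) \setminus \{a_2\}$; non-adjacency $A_1 \not\sim A_2$ follows from $a_1 \not\sim b_1$ (chordlessness of $P^1$) together with the absence of other cross-edges in the prism. For a broken wheel with two long sectors $S$ and $S'$, cutting the rim by removing their interiors yields two disjoint arcs; take $A_1, A_2$ to be the vertex sets of these arcs, $B_1 = \{x\}$, and $B_2, B_3$ to be the interiors of $S, S'$; non-adjacency of $A_1, A_2$ holds because $S, S'$ are long, so their endpoint rim-neighbors of $x$ are pairwise non-adjacent on the rim.

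For the forward direction, suppose $G$ contains $K_{2,3}$ as an induced minor, witnessed by connected branch sets $A_1, A_2, B_1, B_2, B_3$ chosen to minimize the total number of vertices used. By minimality, each $B_i$ induces a chordless path in $G$ from its unique vertex adjacent to $A_1$ to its unique vertex adjacent to $A_2$ (degenerating to a single vertex when these coincide), and each $A_j$ is a minimum connected subgraph of $G$, disjoint from all other branch sets, containing its three vertices attached to $B_1, B_2, B_3$. A structural analysis then shows that $G[A_j]$ must be one of the following four shapes: a single vertex, a triangle on the three attachment vertices, a chordless path containing them, or a subdivided $K_{1,3}$ with them as the three leaves. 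Combining these shapes over $j \in \{1, 2\}$ yields the four named induced subgraphs: two singletons give a theta; a singleton and a triangle give a pyramid (with the singleton side serving as the apex); two triangles give a long prism, necessarily long because otherwise each $B_i$ would be a single edge and force $A_1 \sim A_2$; and all remaining cases give a broken wheel, whose center is a vertex adjacent to the three attachment vertices of a nontrivial $A_j$, and whose rim is obtained by concatenating the paths $B_i$ with parts of the branch sets.

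The most delicate step is the identification of the broken wheel in the last case. Minimality first rules out chords within branch sets, forcing $G[A_j]$ to be a tree with the three attachment vertices as leaves. The wheel's center must then be located as a vertex $c$ adjacent to these three attachment vertices, which by minimality exists either inside $A_{3-j}$ or as the meeting point of the paths $B_i$; its identification requires care to avoid double-counting the branch sets. The wheel's rim is then realized as a chordless cycle passing through the three $B_i$ paths together with a portion of $A_j$ or $A_{3-j}$, and the two long sectors of the wheel correspond to two of the $B_i$'s that are chordless paths of length at least $2$ (the existence of at least two such $B_i$'s follows from the same minimality argument, since otherwise the configuration collapses to a smaller case already handled above).
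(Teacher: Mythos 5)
First, a framing point: the paper does not prove this lemma at all. It is imported, with citation, from Dallard et al.~\cite{DBLP:conf/iwoca/DallardDHMPT24}, so there is no in-paper proof to compare yours against; I can only assess your argument on its own merits. Your backward direction is correct and essentially complete: in each of the four cases the branch sets you exhibit are nonempty, connected, pairwise disjoint, and realize exactly the adjacency pattern of $K_{2,3}$ (chordlessness of the paths and of the rim, together with the ``no edges between the paths except\dots'' clauses, supply all the required non-edges). One small slip: the reason a model with $A_1,A_2$ triangles forces a \emph{long} prism in the converse direction is simply that every branch set $B_i$ is a nonempty vertex set, so every path of the resulting prism has length at least $2$; your remark about ``each $B_i$ would be a single edge'' does not parse.

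The forward direction, however, has genuine gaps, and they are exactly where the real work of the cited proof lies. (i) Your list of possible shapes for $G[A_j]$ is not exhaustive. Under total-size minimality, $A_j$ is a minimal connected set meeting $N(B_1)\cap A_j$, $N(B_2)\cap A_j$, $N(B_3)\cap A_j$; such a set spans a tree with at most three leaves, but the \emph{induced} subgraph may carry extra edges. For example, $A_j=\{c,t_1,t_2,t_3\}$ with $c$ adjacent to all three $t_i$, $t_1t_2\in E(G)$, $t_1t_3,t_2t_3\notin E(G)$, and each $t_i$ the unique vertex of $A_j$ with a neighbor in $B_i$, is a legitimate minimal configuration whose induced subgraph is a paw --- none of ``single vertex, triangle, chordless path, subdivided claw.'' (ii) The shape-to-structure dictionary is wrong as stated. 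If $A_1=\{a\}$ and $G[A_2]$ is a chordless path $t_1t_2t_3$ on the attachment vertices, the outcome is not a broken wheel but a theta with extremities $a$ and $t_2$ (paths $a\cdot B_2\cdot t_2$, $a\cdot B_1\cdot t_1\cdot t_2$, $a\cdot B_3\cdot t_3\cdot t_2$); the paw example above yields a pyramid whose triangle is $\{t_1,t_2,c\}$, not the attachment triple. So ``all remaining cases give a broken wheel'' is false, and identifying which of the four structures actually appears in each configuration is precisely the case analysis you have not carried out. (iii) The non-adjacency verifications are asserted rather than proved. An induced minor model forbids edges only between $A_1$ and $A_2$ and between distinct $B_i$'s; it permits arbitrary edges between $A_j$ and $B_i$. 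Hence, for instance, an attachment vertex $t_1$ of a triangle $A_2$ may also be adjacent to $B_2$, which would put a chord into your purported pyramid. This particular situation can be excluded ($\{t_1,t_3\}$ would then be a smaller valid $A_2$, contradicting minimality), and the uniqueness of the endpoint of $B_i$ adjacent to $A_1$ (resp.\ $A_2$) follows by replacing $B_i$ with a shortest path in $G[B_i]$ between $N(A_1)\cap B_i$ and $N(A_2)\cap B_i$ --- but each such exclusion needs its own argument, and your text supplies none of them. As written, the forward direction is a plausible outline of the strategy of~\cite{DBLP:conf/iwoca/DallardDHMPT24}, not a proof.
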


Hence, to show that every $\{P_4+P_1,C_4\}$-free graph is $K_{2,3}$-induced-minor-free, it suffices to show the following.

\begin{lemma}\label{lemma:P4+P1-C4-free}
Let $H$ be a long prism, pyramid, theta, or a broken wheel.
Then, $H$ contains either $P_4+P_1$ or $C_4$ as an induced subgraph.
\end{lemma}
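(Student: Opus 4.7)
The plan is to argue by case analysis on the four possible structures of $H$, exhibiting in each case an explicit induced $C_4$ on four vertices or an induced $P_4+P_1$ on five vertices. The recurring recipes are: a $C_4$ comes from two short branches of $H$ (sectors of length $2$, or short paths joined through a triangle) together with a common vertex; a $P_4+P_1$ comes from running a $P_4$ along a sufficiently long branch while picking an isolated vertex in a disjoint branch. In all four structures, the only cross-edges between distinct paths (or, for a wheel, between distinct sectors) occur at the prescribed junction points, so checking the chord-freeness of the constructed subgraph is routine.

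For the long prism with paths $P^1,P^2,P^3$, assume $P^1$ has length at least $2$. If $P^2$ and $P^3$ both have length $1$, the four vertices $\{a_2,a_3,b_3,b_2\}$ induce $C_4$. Otherwise $P^2$ also has length $\geq 2$, and denoting by $w_1,w_2$ the first two vertices after $a_1$ on $P^1$ and by $z_1$ the neighbor of $a_2$ on $P^2$, the five vertices $\{a_3,a_1,w_1,w_2,z_1\}$ induce $P_4+P_1$ with $P_4$ on $a_3,a_1,w_1,w_2$ and $z_1$ isolated. For the broken wheel with center $x$ and neighbors $n_1,\dots,n_k$ of $x$ on the rim in cyclic order, if some length-$\geq 2$ sector has length exactly $2$ with internal vertex $u$ between $n_i$ and $n_{i+1}$, then $\{x,n_i,u,n_{i+1}\}$ is $C_4$; otherwise, two sectors have length $\geq 3$, and taking two consecutive internal vertices $u_1,u_2$ of the first and any internal vertex $v$ of the second yields $P_4+P_1$ on $\{x,n_1,u_1,u_2,v\}$.

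For the theta, if at least two of the three paths have length $2$ with internals $v_i,v_j$, then $\{a,v_i,b,v_j\}$ induces $C_4$ (using that $ab$ is not an edge in a theta). Otherwise, at least two paths, say $P^1$ and $P^2$, have length $\geq 3$; denoting by $\gamma_2,\beta_2$ the last two internal vertices of $P^2$, by $\beta'_3$ the neighbor of $b$ on $P^3$, and by $\alpha_1$ the neighbor of $a$ on $P^1$, the five vertices $\{\gamma_2,\beta_2,b,\beta'_3,\alpha_1\}$ induce $P_4+P_1$. For the pyramid, in which at least two of the three paths have length $\geq 2$, the split is: if some path (say $P^3$) has length $1$, then either one of the other paths (say $P^1$) has length exactly $2$ with internal $x_1$, giving $C_4=\{a,x_1,b_1,b_3\}$, or both other paths have length $\geq 3$, in which case a $P_4$ through $a$ and $b_3$ together with an internal vertex of the third path gives $P_4+P_1$; if all three paths have length $\geq 2$, then either some path (say $P^1$) has length $\geq 3$, yielding $P_4+P_1$ on $\{y_1,a,w_1,w_2,b_3\}$ (with $w_1,w_2$ the first two internals of $P^1$, $y_1$ the first internal of $P^2$, and $b_3$ non-adjacent to $a$ since $P^3$ has length $\geq 2$), or all three paths have length exactly $2$, in which case $\{x_1,b_1,b_2,x_2,x_3\}$ is $P_4+P_1$ via the base-triangle edge $b_1b_2$, where $x_i$ denotes the internal of $P^i$.

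The main obstacle will be the bookkeeping for the pyramid, which splits into four distinct sub-cases because the presence of a length-$1$ path interacts nontrivially with the lengths of the remaining two paths and no single construction covers every configuration. In each sub-case, the verification that the proposed subset induces the claimed subgraph reduces to applying the structural rule that the only edges between distinct paths of the pyramid are those incident to the apex $a$ and those inside the base triangle $\{b_1,b_2,b_3\}$.
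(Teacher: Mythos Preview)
Your proof is correct and follows the same case-analysis approach as the paper, with slightly different but equally valid explicit witnesses in several sub-cases (e.g., for the prism you run the $P_4$ into one long path rather than across the $a$-triangle, and your pyramid split is by whether a length-$1$ path exists rather than by whether $P^1$ has length exactly~$2$).

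Two small points of wording need tightening. In the broken wheel case, ``any internal vertex $v$'' of the second sector is too strong: if that sector abuts $n_1$ on the rim, the internal vertex adjacent to $n_1$ would spoil the $P_4+P_1$; choose instead an internal vertex not adjacent to $n_1$, which exists since the sector has length $\ge 3$ and hence at least two internal vertices. In the pyramid sub-case where $P^3$ has length~$1$ and $P^1,P^2$ have length $\ge 3$, the phrase ``an internal vertex of the third path'' is ambiguous (and $P^3$ has no internal vertex), and not every choice works since the first internal of $P^2$ is adjacent to $a$; spell out the construction, e.g., the $P_4$ is $w_1,a,b_3,b_1$ with $w_1$ the first internal vertex of $P^1$, and the isolated vertex is the \emph{second} internal vertex of $P^2$.
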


\begin{proof}
Let $H$ be a long prism, with triangles $\{ a_1, a_2, a_3\}$ and $\{b_1, b_2, b_3\}$, and vertex-disjoint chordless paths $P^1 = a_1\ldots b_1$, $P^2 = a_2\ldots b_2$, and $P^3 = a_3\ldots b_3$.
If two of the paths of $H$ have length~$1$, then there is an induced $C_4$ in $H$.
Hence, we can assume that w.l.o.g.~that $P^1$ and $P^2$ each have length at least 2.
Let $x$ and $y$ be vertices adjacent to $a_1$ and $a_2$ along $P^1$ and $P^2$, respectively.
Then, the subgraph of $H$ induced by the set $\{x, a_1, a_2, y, b_3\}$ is isomorphic to $P_4+P_1$.
Therefore, $H$ contains either $P_4+P_1$ or $C_4$ as an induced subgraph.

Next, let $H$ be a pyramid, with a triangle $\{b_1, b_2, b_3\}$ and three chordless paths $P^1 = a\ldots b_1,$ $P^2 = a\ldots b_2$, and $P^3 = a\ldots b_3$.
We may assume w.l.o.g.~that $P^1$ and $P^2$ each have length at least 2 and let $x$ and $y$ be vertices adjacent to $a_1$ and $a_2$ along $P^1$ and $P^2$, respectively.
Suppose that $P^1$ has length exactly 2.
If $P^3$ has length 1, then the set $\{ x, b_1, b_3, a \}$ induces a subgraph of $H$ isomorphic to $C_4$.
If $P^3$ has length at least 2, then the set $\{ x, b_1, b_2, y, z \}$ where $z$ is any internal vertex of $P^3$, induces a subgraph of $H$ isomorphic to $P_4+P_1$.
By symmetry, the same happens if $P^2$ has length exactly 2, so we can assume that both $P^1$ and $P^2$ have length at least 3.
Then, vertices $\{ x, b_1, b_2, y, a \}$ induce a subgraph of $H$ isomorphic to $P_4+P_1$.
Hence, $H$ contains either $P_4+P_1$ or $C_4$ as an induced subgraph.

Next, let $H$ be a theta, and let $P^1,P^2 ,P^3$ be its three internally vertex-disjoint chordless paths with extremities $a,b$.
Note that we may assume that no two of these paths each have length exactly $2$, since otherwise there is an induced $C_4$ in $H$.
Suppose w.l.o.g.~that $P^1$ and $P^2$ have length at least 3.
Let $x$, $y$, and $z$ be the vertices adjacent to $a$ along $P^1$, $P^2$ and $P^3$, respectively, let $w$ be the neighbor of $x$ in $P^1$ other than $a$, and let $w'$ be the neighbor of $z$ in $P^3$ other than $a$.
Then, the set $\{ w,x,a,y, w' \}$ induces a subgraph of $H$ isomorphic to $P_4+P_1$.
Therefore, $H$ contains either $P_4+P_1$ or $C_4$ as an induced subgraph.

Finally, let $W=(H,x)$ be a broken wheel, and let $S$ and $S'$ be two sectors of $W$ of length at least 2.
If either $S$ or $S'$ have length exactly 2, then there is an induced $C_4$ in $W$.
So suppose that $S$ and $S'$ have length at least 3.
Then four consecutive vertices of $S$ and an internal vertex of $S'$ not adjacent to the extremities of $S$ induce a subgraph of $W$ isomorphic to $P_4+P_1$.
Hence, $W$ contains either $P_4+P_1$ or $C_4$ as an induced subgraph and this completes the proof.
\end{proof}

\Cref{lemma:P4+P1-C4-free,lemma:K_23} imply the following.

\begin{corollary}\label{P4+P1-C4-free-K23-induced-minor-free}
Every $\{ P_4 + P_1, C_4 \}$-free graph is $K_{2,3}$-induced-minor-free.
\end{corollary}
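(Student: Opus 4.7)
The plan is to deduce the corollary directly by chaining the two preceding lemmas via contraposition. I would start by assuming that $G$ is a graph containing $K_{2,3}$ as an induced minor and argue that $G$ must then contain $P_4+P_1$ or $C_4$ as an induced subgraph. Applying \Cref{lemma:K_23} to $G$ produces an induced subgraph $H$ of $G$ that is isomorphic to a long prism, a pyramid, a theta, or a broken wheel. Applying \Cref{lemma:P4+P1-C4-free} to $H$ then yields an induced subgraph of $H$ isomorphic to $P_4+P_1$ or $C_4$. Since the induced subgraph relation is transitive, this same induced copy lives inside $G$, so $G$ is not $\{P_4+P_1,C_4\}$-free. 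Contrapositively, every $\{P_4+P_1,C_4\}$-free graph must be $K_{2,3}$-induced-minor-free.

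The entire structural content sits in \Cref{lemma:P4+P1-C4-free} and \Cref{lemma:K_23}, so there is no real obstacle at this step; the corollary is a one-line composition of those two results. The only sanity check worth performing is that the forbidden-induced-subgraph class is hereditary, which makes the transitivity step in the chain immediate and ensures the two lemmas can indeed be composed in the required direction.
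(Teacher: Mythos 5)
Your proposal is correct and matches the paper exactly: the paper derives the corollary as an immediate consequence of \Cref{lemma:K_23} and \Cref{lemma:P4+P1-C4-free}, composed precisely as you describe. Nothing further is needed.
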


Let us remark that the result of the above corollary cannot be extended to the class of $\{P_6,C_4\}$-free graphs (in particular, \Cref{conjecture:excluding-a-path} is still open for this case).
In fact, for every integer $t\ge 2$ there exists a $\{P_6,C_4\}$-free graph $G_t$ that is not \hbox{$K_{2,t}$-induced-minor-free}.
Let $G_t$ be the graph obtained from the complete graph with vertex set $\{v_0,v_1,\ldots, v_t\}$ by subdividing once every edge incident with $v_0$.
Then, $G_t$ is a $\{P_6,C_4\}$-free graph that contains $K_{2,t}$ as an induced minor, as witnessed by contracting the edges within the clique $\{v_1,\ldots, v_t\}$.
\medskip

The following upper bound on the tree-independence number of $K_{2,3}$-induced-minor-free graphs is given by Lemmas 3.3 and 3.10 in \cite{dallard2023treewidth}.

\begin{proposition}\label{prop:K_23 bdd tin}
If $G$ is a $K_{2,3}$-induced-minor-free graph, then $\tin (G)\le 3$.
\end{proposition}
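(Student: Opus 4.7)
The plan is to prove the bound $\tin(G)\le 3$ by combining two ingredients from \cite{dallard2023treewidth}: the separator characterization of tree-independence number (their Lemma~3.3) and a construction of a balanced separator of small induced independence number inside $K_{2,3}$-induced-minor-free graphs (their Lemma~3.10).

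First, I would recall the separator-to-decomposition reduction. The statement $\tin(G)\le k$ follows whenever every induced subgraph $H$ of $G$, equipped with any nonnegative weight function $w$ on $V(H)$, admits a subset $S\subseteq V(H)$ with $\alpha(H[S])\le k$ such that each connected component of $H-S$ has $w$-weight at most $\tfrac{1}{2}w(V(H))$. A standard recursive construction then assembles such balanced separators into a tree decomposition whose bags induce subgraphs of independence number at most $k$. This step is class-agnostic and reduces the whole problem to a local combinatorial task inside the class.

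Second, I would construct, inside every $K_{2,3}$-induced-minor-free graph $G$ and for every weight function on $V(G)$, a balanced separator $S$ with $\alpha(G[S])\le 3$. The natural candidate is the closed neighborhood $N_G[A]$ of a carefully chosen connected set $A$, taken to be minimal (or extremal along a suitable ordering) with the property that each component of $G-N_G[A]$ has weight at most $\tfrac{1}{2}w(V(G))$. The bound $\alpha(G[N_G[A]])\le 3$ is then forced by the excluded induced minor: four pairwise nonadjacent vertices in $N_G[A]$, together with $A$ contracted to a single branch vertex on one side and a heavy component of $G-N_G[A]$ contracted to a single branch vertex on the other side, would yield an induced-minor model of $K_{2,3}$, which is forbidden.

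The main obstacle is the second step: showing that four pairwise nonadjacent vertices in the separator really do produce an \emph{induced}-minor model of $K_{2,3}$, not merely a minor model. One must exhibit three pairwise disjoint connected branch sets linking these nonadjacent vertices to $A$ on one side and to a heavy component on the other side, \emph{and} control the adjacencies between branch sets so that the contracted graph contains $K_{2,3}$ as an induced subgraph. The minimality of $A$ is what rules out spurious edges between branch sets: any such edge would allow one to reroute $A$ to be smaller while preserving the balanced-separator property, contradicting extremality. Pinning down this routing-plus-extremality argument is exactly the content of Lemma~3.10 of \cite{dallard2023treewidth} and is where the technical difficulty lies; once established, combining it with the separator characterization immediately yields $\tin(G)\le 3$ and hence the proposition.
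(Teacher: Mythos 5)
Your proposal matches the paper exactly: the paper gives no independent proof of this proposition but derives it directly by combining Lemmas~3.3 and~3.10 of~\cite{dallard2023treewidth}, which are precisely the separator-to-decomposition reduction and the balanced-separator construction for $K_{2,3}$-induced-minor-free graphs that you describe. Your sketch of how those two ingredients fit together (including correctly locating the technical burden in Lemma~3.10) is consistent with the cited source, so there is nothing further to add.
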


\Cref{P4+P1-C4-free-K23-induced-minor-free,prop:K_23 bdd tin} imply
that $\{ P_4+P_1,C_4\}$-free graphs have bounded tree-independence number.

\begin{lemma}\label{lemma:P1+P4-C4-bdd tin}
If $G$ is a $\{ P_4+P_1,C_4\}$-free graph, then $\tin (G)\le 3$.
\end{lemma}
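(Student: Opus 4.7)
The statement is an immediate two-line consequence of the two results stated just above it, so the proposal is essentially to chain them together. The plan is: given any $\{P_4+P_1,C_4\}$-free graph $G$, first invoke \Cref{P4+P1-C4-free-K23-induced-minor-free} to conclude that $G$ is $K_{2,3}$-induced-minor-free, and then invoke \Cref{prop:K_23 bdd tin} to conclude that $\tin(G)\le 3$. No further work is needed, since \Cref{P4+P1-C4-free-K23-induced-minor-free} encapsulates the main structural content (that the four obstructions from \Cref{lemma:K_23}, namely long prisms, pyramids, thetas, and broken wheels, all contain an induced $P_4+P_1$ or $C_4$), and \Cref{prop:K_23 bdd tin} is imported from~\cite{dallard2023treewidth}.

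Since the real work has already been done in establishing \Cref{lemma:P4+P1-C4-free} and \Cref{P4+P1-C4-free-K23-induced-minor-free} via \Cref{lemma:K_23}, there is no nontrivial obstacle at this step. The only thing to be careful about in the write-up is to phrase the proof in a single sentence (or two) that explicitly names the two results being combined, so that the logical dependence is transparent to the reader. A reasonable one-sentence proof would read: ``Let $G$ be a $\{P_4+P_1,C_4\}$-free graph. By \Cref{P4+P1-C4-free-K23-induced-minor-free}, $G$ is $K_{2,3}$-induced-minor-free, and hence by \Cref{prop:K_23 bdd tin}, $\tin(G)\le 3$.'' No induction, no case analysis, no new construction of a tree decomposition is required here, as the tree decomposition is obtained implicitly through \Cref{prop:K_23 bdd tin}.
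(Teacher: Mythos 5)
Your proposal is correct and matches the paper exactly: the paper derives this lemma by the same one-line chaining of \Cref{P4+P1-C4-free-K23-induced-minor-free} with \Cref{prop:K_23 bdd tin}, with no additional argument. Nothing further is needed.
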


We now show that the above result can be strengthened by replacing the tree-independence number with a larger parameter, obtained by considering, instead of the maximum cardinality of an independent set contained in a bag, the chromatic number of the complement of the subgraph induced by the bag.
A precise definition is as follows.
A \emph{clique cover} of a graph $G$ is a partition of $V(G)$ into cliques.
The \emph{clique cover number} of a graph $G$, denoted by $\theta(G)$, is the minimum number of cliques in a clique cover of $G$.
Since a coloring of a graph $G$ is a partition of $V(G)$ into independent sets, and since $S\subseteq V(G)$ is a clique in $G$ if and only if $S$ is an independent set in $\overline{G}$, it follows that a partition of vertices of $G$ is a clique cover of $G$ if and only if it is a coloring of $\overline{G}$, i.e., $\theta (G) = \chi (\overline{G})$.
The \emph{clique cover number} of a tree decomposition $\mathcal{T}$ of $G$ is defined as
\[\theta(\mathcal{T}) = \max_{t\in V(T)} \theta(G[X_t])\,.\]
The \emph{tree-clique-cover number} of a graph $G$, denoted $\tccn(G)$, is the minimum clique cover number among all possible tree decompositions of $G$ (see Abrishami et al.~\cite{MR4763243}).

Note that a graph $G$ is $\{ P_4+P_1,C_4\}$-free if and only if its complement $\overline{G}$ is $\{2K_2$, gem$\}$-free, where the \emph{gem} is the join of $P_4$ and $P_1$.
Brause et al.~\cite[Corollary 18]{zbMATH06994879} gave the following bound on the chromatic number of $\{2K_2$, gem$\}$-free graphs.

\begin{lemma}[Brause et al.~\cite{zbMATH06994879}]\label{lemma:2K_2-gem}
If $G$ is a $\{ 2K_2$, gem$\}$-free graph, then $\chi(G) \le \max \{\omega(G), 3\}$.
\end{lemma}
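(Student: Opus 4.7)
The strategy is induction on $n = |V(G)|$. Base cases $n \le 3$ give $\chi(G) \le n \le 3$ at once. For the inductive step, the first reduction is to the connected case: since $G$ is $2K_2$-free, at most one component of $G$ can contain an edge (two edge-containing components would themselves form an induced $2K_2$), so the remaining components are isolated vertices that can share a single color with one of the color classes of the unique non-trivial component, to which the induction hypothesis applies. The second reduction handles the case $\omega(G) \le 2$: then $G$ is triangle-free, and combined with $2K_2$-freeness, the only induced cycles possible in $G$ are $C_4$ and $C_5$. A short structural analysis (either $G$ is bipartite, or $G$ contains an induced $C_5$ whose attachment by outside vertices is heavily constrained by the two forbidden subgraphs) then yields $\chi(G) \le 3$.

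For the main case $\omega(G) \ge 3$ the goal is to prove $\chi(G) \le \omega(G)$. The natural first dichotomy is $P_4$-freeness. If $G$ is $P_4$-free, then $G$ is a cograph and hence perfect, giving $\chi(G) = \omega(G)$ directly. Otherwise, fix an induced path $v_1v_2v_3v_4$ in $G$. Gem-freeness prevents any other vertex from being adjacent to all four of $\{v_1, v_2, v_3, v_4\}$, while $2K_2$-freeness forces every edge $xy$ with $\{x,y\} \cap V(P_4) = \emptyset$ to have an endpoint in $N(v_1) \cup N(v_2)$ and an endpoint in $N(v_3) \cup N(v_4)$. These two conditions severely restrict the possible adjacency types of vertices outside $V(P_4)$ and make $G$ nearly modular in structure.

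The next step is to exploit a reducible configuration in $G$: a simplicial vertex $v$ (color $G - v$ inductively and extend using one of the at least $\omega(G) - d(v) \ge 1$ leftover colors), a pair of true twins (identify them and apply induction, noting that the class is closed under such contractions), or a clique cutset (decompose along it and merge two partial colorings consistently on the clique). Each of these cases produces an $\omega(G)$-coloring of $G$ by the induction hypothesis. The main obstacle is the residual \emph{prime} case, in which $G$ has no simplicial vertex, no twin pair, and no clique cutset, yet contains an induced $P_4$ and satisfies $\omega(G) \ge 3$. Here the plan is to fix a maximum clique $K$ of size $\omega(G)$, classify the vertices outside $K$ by their neighborhood in $K$, and use the structural constraints from the previous paragraph to show that the outside part breaks into a bounded collection of cliques whose colors can be meshed with an $\omega(G)$-coloring of $K$ without a collision; ruling out obstructions coming from induced $C_5$'s straddling $K$, and verifying that no extra color is ever needed, is the delicate heart of the argument.
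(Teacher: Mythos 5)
First, a point of comparison: the paper does not prove this lemma at all --- it is quoted as Corollary~18 of Brause et al.~\cite{zbMATH06994879} --- so there is no in-paper proof to measure your argument against; it must stand on its own. Judged that way, it has a genuine gap. The peripheral reductions are mostly fine: the disconnected case, the $P_4$-free (cograph) case, the clique-cutset case, the simplicial-vertex case, and the case $\omega(G)\le 2$ (which also follows from Wagon's bound $\chi(G)\le\binom{\omega(G)+1}{2}$ for $2K_2$-free graphs). But the entire content of the theorem sits in the case you defer to the end: $G$ with no simplicial vertex, no clique cutset, an induced $P_4$, and $\omega(G)\ge 3$. For that case you only announce a plan (``fix a maximum clique $K$, classify the outside vertices, show the outside breaks into boundedly many cliques, rule out $C_5$'s straddling $K$'') and explicitly label it the delicate heart of the argument without carrying it out. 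The observations you extract from a fixed induced $P_4$ (no vertex dominates it, every outside edge sees both ends of it) are correct but nowhere near sufficient; as written there is no argument that this residual case admits an $\omega(G)$-colouring, and that is precisely the claim to be proved.

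One of your named reductions is also unsound. Identifying a pair of \emph{true} twins $u,v$ yields $G-v$, but after colouring $G-v$ with $\max\{\omega(G-v),3\}$ colours you cannot reuse a colour on $v$: since $uv\in E(G)$, the vertex $v$ must avoid every colour appearing on $N[u]\setminus\{v\}$, and nothing bounds that set of colours away from the full palette. You would need $\omega(G)=\omega(G-v)+1$ to afford a fresh colour, which fails in general because the twin pair need not lie in a maximum clique. (False twins can be identified this way; true twins cannot without further argument --- note that $K_n$ is $\chi$-critical and consists entirely of true twins.) Combined with the unproved prime case, the proposal does not establish the lemma; you would need either to complete the structural analysis of $\{2K_2,\textnormal{gem}\}$-free prime graphs or, as the paper does, simply cite Brause et al.
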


Based on these observations, we are ready to prove the main result of this section.

\thmtccn*

\begin{proof}
Let $G$ be a $\{ P_4+P_1,C_4\}$-free graph.
By \Cref{lemma:P1+P4-C4-bdd tin}, there is a tree decomposition $\mathcal{T}=(T, \{ X_t \}_{t\in V(T)})$ of $G$ with independence number at most 3.
Fix $t\in V(T)$.
Then, by \Cref{lemma:2K_2-gem}, \[\theta(G[X_t]) = \chi (\overline{G[X_t]})\le \max \{\omega (\overline{G[X_t]}), 3 \} = \max \{ \alpha(G[X_t]), 3 \}\le 3\,.\]
Hence, $\tccn(G)\le 3$, as claimed.
\end{proof}

\section{Conclusion}\label{sec:conclusion}

We conclude the paper with a few questions left open by our work.

First, let us note a conjecture due to Gartland and Lokshtanov whose validity would imply \Cref{conj:planar-H-no-star}.
Given a graph $G$, a set $S\subseteq V(G)$ is said to be a \emph{balanced separator} in $G$ if all components $C$ of the graph $G-S$ satisfy $|V(C)|\le|V(G)|/2$.
Given a graph $G$ and two sets $X,Y\subseteq V(G)$, we say that $X$ \emph{dominates} $Y$ if $Y\subseteq N[X]$.

\begin{conjecture}[Induced Grid Minor Conjecture,~\cite{Gartland23}]\label{conj:Gartland}
There exists a function $f$ such that for every planar graph $H$, every $H$-induced-minor-free graph $G$ has a balanced separator dominated by $f(H)$ vertices.
\end{conjecture}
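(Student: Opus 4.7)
Since \Cref{conj:Gartland} is a well-known open problem, my proposal is a plan of attack rather than a complete proof; I sketch the natural approach and indicate where I believe the main obstacle lies. The first step is a standard reduction to grids: because every planar graph is an induced minor of a sufficiently large grid~\cite[Theorem 12]{campbell2024treewidthhadwigernumberinduced}, it suffices to produce, for each integer $k$, a constant $f(k)$ such that every graph excluding the $(k\times k)$-grid as an induced minor admits a balanced separator dominated by $f(k)$ vertices. This lets us replace an arbitrary planar $H$ by the single, highly structured family of grids, for which one can reason concretely about rows, columns, and shortest paths.

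Fix $k$ and let $G$ exclude the $(k\times k)$-grid as an induced minor. Pick any vertex $v_0\in V(G)$ and consider its BFS layers $L_0, L_1, L_2, \ldots$. A classical weight-counting argument yields an index $i$ such that a bounded number of consecutive layers around $L_i$ form a balanced separator of $G$. The plan is to show that such a separator can always be dominated by $f(k)$ vertices. The natural approach is contrapositive: if the separator required many dominators, one could select a large pairwise far-apart set of anchor vertices inside it, grow each anchor back to $v_0$ along the BFS tree so that the resulting paths are induced and internally disjoint (this is where the BFS structure is essential), and then weave in lateral paths at a nearby depth to realize an induced $(k\times k)$-grid minor, contradicting the hypothesis. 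This is the same high-level template that underlies the classical Grid-Minor Theorem and Korhonen's induced version for bounded-degree graphs~\cite{MR4539481}.

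The step I expect to be the main obstacle, and the reason the conjecture remains open, is controlling high-degree vertices. For bounded-degree graphs, Korhonen~\cite{MR4539481} essentially already supplies everything: bounded degree plus no induced grid gives bounded treewidth, and bounded treewidth gives small (and hence small-dominator) balanced separators. In the general setting, a single vertex of very high degree can short-circuit the BFS-combining argument above: for instance, a blow-up of a small graph has no induced grid beyond a fixed size, yet any natural balanced separator contains a large independent set that no small subset dominates. A plausible way forward is a two-phase strategy: first, absorb all vertices of sufficiently high degree into the candidate dominator set, relying on a global bound on their number extracted from the induced-minor hypothesis (some form of \emph{coarse neighborhood complexity} for $H$-induced-minor-free graphs); then, in the bounded-degree remainder, apply a Korhonen-style induced-grid extraction argument to handle what is left. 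Proving such a global bound on high-degree vertices, and ruling out that they can \emph{simulate} grid-like structure without literally realizing it as an induced minor, is in my view the true crux of the conjecture.
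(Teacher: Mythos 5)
This statement is a conjecture: the paper does not prove it, it only records it (attributed to Gartland and Lokshtanov~\cite{Gartland23}) and observes that it would imply \Cref{conj:planar-H-no-star}. So there is no ``paper's own proof'' to compare against, and your submission, which you candidly frame as a plan of attack, is not a proof either. To your credit, the one step you do carry out correctly --- reducing from arbitrary planar $H$ to the $(k\times k)$-grid via~\cite[Theorem 12]{campbell2024treewidthhadwigernumberinduced} --- is exactly the reduction the paper itself invokes to say that the grid formulation in~\cite{Gartland23} and the planar-$H$ formulation are equivalent. Everything after that, however, is a wish list rather than an argument.

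The concrete gap is the entire second half: you assert that if a band of BFS layers required many dominators, one could choose many pairwise far-apart anchors, grow induced pairwise-disjoint paths back to the root along the BFS tree, and ``weave in lateral paths at a nearby depth'' to build an induced $(k\times k)$-grid minor. None of these steps is established, and the last one is precisely where all known approaches break. BFS-tree paths to a common root are not internally disjoint without further work, inducedness of the union of many such paths is not automatic (chords between different paths must be controlled, which is the whole difficulty of \emph{induced} minor extraction), and there is no known mechanism for producing the lateral connections as induced paths avoiding the vertical ones in graphs of unbounded degree. Your own blow-up example shows the contrapositive implication ``many dominators $\Rightarrow$ induced grid'' cannot be run naively, and your proposed fix --- a global bound on the number of high-degree vertices in $H$-induced-minor-free graphs --- is itself an unproven (and, stated that baldly, false: consider large complete graphs, which are $K_{2,3}$-induced-minor-free) structural claim. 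So the proposal identifies the crux honestly but does not close it; the conjecture remains open, and nothing in this paper or in your write-up resolves it.
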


In~\cite{Gartland23}, the conjecture is stated only for the case when $H$ is a $(k\times k)$-grid; however, the two conjectures are equivalent, since every planar graph is an induced minor of a sufficiently large grid graph (see~\cite[Theorem 12]{campbell2024treewidthhadwigernumberinduced}).

Observe that in a $K_{1,t}$-free graph $G$, a set $S\subseteq V(G)$ that is dominated by a set of at most~$k$ vertices induces a subgraph with independence number at most $kt$.
Hence, by \cite[Lemma 7.1]{arXiv:2405.00265} (see also~\cite{dallard_et_al:LIPIcs.ICALP.2024.51}),
\Cref{conj:Gartland} implies the following strengthening of \Cref{conj:planar-H-no-star}.

\begin{sloppypar}
\begin{conjecture}\label{conj:linear}
For every planar graph $H$, there exists a constant $c_H$ such that for every positive integer $t$, every $K_{1,t}$-free $H$-induced-minor-free graph $G$ satisfies \hbox{$\tin(G)\le c_H\cdot t$}.
\end{conjecture}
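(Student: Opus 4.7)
The plan is to derive Conjecture~\ref{conj:linear} conditionally from Conjecture~\ref{conj:Gartland} (the Induced Grid Minor Conjecture), via the route already sketched in the paragraph immediately preceding the statement. Fix a planar graph $H$, let $f$ be the function supplied by Conjecture~\ref{conj:Gartland}, and aim for $c_H = f(H)+1$.

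The proof would proceed through the standard balanced-separator-to-tree-decomposition framework. Since both $K_{1,t}$-freeness and $H$-induced-minor-freeness are hereditary, Conjecture~\ref{conj:Gartland} applies to every induced subgraph $G'$ of $G$ and produces a balanced separator $S'\subseteq V(G')$ together with a dominating set $X'\subseteq V(G')$ with $|X'|\le f(H)$. The key numerical observation, already highlighted in the paper, is that in a $K_{1,t}$-free graph every closed neighborhood has independence number at most $t$; hence
\[
\alpha(G'[S']) \;\le\; \alpha(G'[N[X']]) \;\le\; \sum_{x \in X'} \alpha(G'[N[x]]) \;\le\; f(H)\cdot t.
\]
Feeding this family of balanced separators, one per induced subgraph, into \cite[Lemma~7.1]{arXiv:2405.00265} packages them into a single tree decomposition of $G$ with bags of independence number at most $f(H)\cdot t+1$.

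The main obstacle, of course, is that Conjecture~\ref{conj:Gartland} is itself a deep open problem, so the plan as stated yields nothing unconditional. A sensible interim target is to verify Conjecture~\ref{conj:linear} for those planar graphs $H$ for which Conjecture~\ref{conj:planar-H-no-star} has already been resolved (notably $H\in\mathcal{S}$ from~\cite{dallard2024treewidth}, cycles from~\cite{MR3425243}, wheels from~\cite{CHMW2025}, and $kC_3$ from~\cite{ahn2024coarseerdhosposatheorem}), by inspecting the corresponding proofs to confirm that their implicit separator-independence bounds are genuinely linear in $t$, not merely polynomial or polylogarithmic. A secondary bookkeeping concern is to control how the $+1$ in the bag independence number propagates through the recursive construction, so as to keep $c_H$ close to $f(H)$ rather than letting it grow with the recursion depth.
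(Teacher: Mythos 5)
The statement you are asked to prove is labeled as a \emph{conjecture} in the paper, and the paper offers no proof of it: it only records the conditional implication from the Induced Grid Minor Conjecture (\Cref{conj:Gartland}), which is precisely what your proposal reconstructs. Your conditional argument is sound and matches the paper's sketch: hereditary classes let you apply \Cref{conj:Gartland} to every induced subgraph, the bound $\alpha(G'[N[X']])\le f(H)\cdot t$ for $K_{1,t}$-free graphs is correct (each closed neighborhood has independence number at most $t$), and Lemma~7.1 of \cite{arXiv:2405.00265} is the right tool to convert the family of balanced separators into a tree decomposition with the stated independence number. But, as you yourself acknowledge, \Cref{conj:Gartland} is open, so this yields no unconditional proof; \Cref{conj:linear} remains a conjecture, and the honest conclusion is that nothing here needs to be (or can currently be) proved.

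One concrete caution about your proposed interim target. You suggest verifying \Cref{conj:linear} for the planar graphs $H$ for which \Cref{conj:planar-H-no-star} is already known, by checking that the existing proofs give bounds linear in $t$. The paper's concluding section already records that this works only partially: the known bounds are linear in $t$ when $H=kC_3$ (both by the proof of Ahn et al.\ and by the proof in \Cref{sec:star}) and when $H$ is a cycle, but only $\mathcal{O}_H(t^4)$ when $H\in\mathcal{S}$ and exponential in $t$ when $H$ is a wheel. So for those last two families your inspection strategy would not close the gap; the linearity in $t$ asserted by \Cref{conj:linear} is genuinely open even in cases where boundedness of the tree-independence number is known.
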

\end{sloppypar}

While a positive resolution to \Cref{conj:linear} would prove \Cref{conj:planar-H-no-star}, a negative one would disprove \Cref{conj:Gartland}.
Similar properties holds for a variant of~\Cref{conj:linear} asking for an upper bound on the tree-independence number with a polynomial, instead of linear, dependency on $t$.
The existing proofs of special cases of \Cref{conj:planar-H-no-star} give bounds on tree-independence number that are linear in $t$ if $H=kC_3$ for some positive integer $k$ (both by the proof of Ahn et al.~\cite{ahn2024coarseerdhosposatheorem} as well as our proof) or if $H$ is a cycle (from~\cite[Theorem 3.3]{MR3425243}), polynomial in $t$ (more precisely, in $\mathcal{O}_H(t^4)$) if $H\in \mathcal{S}$~\cite{dallard2024treewidth}, and exponential in $t$ if $H$ is a wheel~\cite{CHMW2025}.

\medskip
\Cref{prop:unbounded-tree-alpha} implies that for any two integers $k\ge 2$ and $t\ge 2$, the tree-independence number of $K_{t,t}$-free $\mathcal{O}_k$-free graphs can be logarithmic in the number of vertices.
We wonder whether this is the worst that can happen.

\begin{question}\label{q1}
Is it true that for every two positive integers $k$ and $t$ there exists a constant $c_{k,t}>0$ such that if $G$ is a $K_{t,t}$-free $\mathcal{O}_k$-free graph with at least two vertices, then $\tin(G)\le c_{k,t}\cdot \log |V(G)|$?
\end{question}

\begin{sloppypar}
As a related and possibly easier variant of \cref{q1}, we could ask about a \hbox{polylogarithmic} bound on the tree-independence number of $K_{t,t}$-free $\mathcal{O}_k$-free graphs.
Such bounds were recently established for a number of graph classes (see~\cite{DBLP:conf/soda/ChudnovskyGHLS25,arXiv:2405.00265,chudnovsky2025treeindependencenumberv}).
\end{sloppypar}

\medskip
As shown by~\cite[Proposition 6.3]{dallard2024treewidth} and \Cref{thm:P3+P1}, respectively, for every graph that is either $P_4$-free or $(P_3+P_1)$-free, the tree-independence number differs from the induced biclique number by at most a constant.
We ask whether this is true, more generally, in the class of $P_5$-free graphs.

\begin{question}\label{q2}
Is there a constant $c$ such that $\tin(G)\le \ibn(G)+c$ for every $P_5$-free graph~$G$?
\end{question}

An affirmative answer to \cref{q2} would settle \Cref{conjecture:excluding-a-path} for $k = 5$.
As a possibly simpler variant of the question, one could consider it first for the subclass of $2P_2$-free graphs.

\medskip
By \Cref{thm:tccn}, every $\{P_4+P_1,C_4\}$-free graph admits a tree decomposition such that every bag is a union of three cliques.
We do not know if this bound is sharp: while the $5$-cycle is an example of a $\{P_4+P_1,C_4\}$-free graph with tree-clique-cover number equal to~$2$, we do not know whether this bound must ever be exceeded.
As a possible approach to this question, we propose the following.

\begin{question}\label{q3}
Does every $\{P_4+P_1,C_4\}$-free graph $G$ have a clique $K$ such that $G-K$ is a chordal graph?
\end{question}

\paragraph{Acknowledgements.}

The authors are grateful to Kenny Bešter Štorgel, Cl\'ement Dallard, Vadim Lozin, and Viktor Zamaraev for helpful discussions.
This work is supported in part by the Slovenian Research and Innovation Agency (I0-0035, research program P1-0285 and research projects J1-3003, J1-4008, J1-4084, J1-60012, and N1-0370) and by the research program CogniCom (0013103) at the University of Primorska.

\bibliographystyle{abbrvurl}

\bibliography{biblio}

\end{document}